\newcommand{\ud}{\mathrm{d}}
\begin{document}

\long\def\TITLE#1{\hang{\bf#1}\vskip10pt}
\long\def\ADDRESS#1{\noindent {\rm [ #1 ]} \vskip10pt }
\long\def\ABSTRACT#1 {\parindent=0pt {\rm #1 }\parindent=1.5cm}
\long\def\AFFILIATION#1{\hang {\it #1} \vskip10pt}
\long\def\AUTHOR#1{\hang {\rm#1}}

\newtheorem{theorem}{Theorem}[section]
\newtheorem{algo}[theorem]{Algorithm}
\newtheorem{prop}[theorem]{Proposition}
\newtheorem{exempel}[theorem]{Example}
\newtheorem{remark}[theorem]{Remark}
\newtheorem{example}[theorem]{Example}
\newtheorem{conjecture}[theorem]{Conjecture}
\newtheorem{hypo}[theorem]{Hypothesis}
\newtheorem{details}[theorem]{Details}
\newtheorem*{details*}{Details}
\newtheorem{antag}[theorem]{Assumption}
\newtheorem{lemma}[theorem]{Lemma}
\newtheorem{definition}[theorem]{Definition}
\newtheorem{question}[theorem]{Question}
\newtheorem{reminder}[theorem]{Reminder}
\newtheorem*{reminder*}{Reminder}
\newtheorem{proposition}[theorem]{Proposition}
\renewcommand{\theequation}{\arabic{section}.\arabic{equation}}
\def\sqr#1#2{{\vcenter{\vbox{\hrule height.#2pt
\hbox{\vrule width.#2pt height#1pt \kern#1pt
\vrule width.#2pt}
\hrule height.#2pt}}}}
\font\largeb=cmb10 scaled\magstep2
\font\hugeb=cmb10 scaled\magstep3
\def\a{\alpha}
\def\Am{{A_{max}}}
\def\b{\beta}
\def\bR{{\bf R}}
\def\bb{{\cal b}}
\def\cB{{\cal B}}
\def\cC{{\cal C}}
\def\cD{{\cal D}}
\def\cE{{\cal E}}
\def\cF{{\cal F}}
\def\cG{{\cal G}}
\def\cO{{\cal O}}
\def\cL{{\cal L}}
\def\cS{{\cal S}}
\def\cX{{\cal X}}
\def\d{\partial}
\def\de{\delta}
\def\e{\varepsilon}
\def\f{\frac}
\def\g{\gamma}
\def\8{\infty}
\def\k{\kappa}
\def\l{\lambda}
\def\L{\Lambda}
\def\nn{\nonumber}
\def\oa{{\overline a_A}}
\def\ob{{\overline \beta}}
\def\ocF{{\overline{\cal F}}}
\def\of{{\overline f}}
\def\oF{{\overline F}}
\def\oh{{\overline h}}
\def\ome{\omega}
\def\Ome{\Omega}
\def\ops{{\overline \psi}}
\def\os{{\overline s}}
\def\oS{{\overline S}}
\def\oT{{\overline T}}
\def\ot{{\overline \tau}}
\def\ob{{\overline b}}
\def\obe{{\overline\beta}}
\def\obeta_2{{\overline \beta_2}}
\def\oh{{\overline h}}
\def\ob{{\overline \beta}}
\def\og{{\overline g}}
\def\oga{{\overline \gamma}}
\def\oi{{\overline i}}
\def\oI{{\overline I}}
\def\oK{{\overline K}}
\def\ok{{\overline k}}
\def\oL{{\overline L}}
\def\om{{\overline m}}
\def\omu{{\overline \mu}}
\def\on{{\overline n}}
\def\oo{{\overline o}}
\def\ophi{{\overline \phi}}
\def\oq{{\overline q}}
\def\oR{{\overline R}}
\def\oS{{\overline S}}
\def\ov{{\overline v}}
\def\ovi{{\overline \vi}}
\def\ow{{\overline w}}
\def\oW{{\overline W}}
\def\ox{{\overline x}}
\def\oX{{\overline X}}
\def\oy{{\overline y}}
\def\oY{{\overline Y}}
\def\oz{{\overline z}}
\def\ovarphi{{\overline \varphi}}
\def\ug{{\underline g}}
\def\uga{{\underline \gamma}}
\def\uta{{\underline \t}}
\def\0*{^{\odot *}}
\def\qed{\hfill$\sqr45$\bigskip}
\def\rM{{\rm M}}
\def\N{\mathds{N}}
\def\R{\mathds{R}}
\def\om{\omega}
\def\s{\sigma}
\def\Si{\Sigma}
\def\t{\tau}
\def\th{\theta}
\numberwithin{equation}{section}
\def\xa{{x_A}}
\def\vi{\varphi}
\def\xb{{x_b}}
\def\xm{{x_m}}
\def\Yx{Y^{\times}}
\def\yx{y^{\times}}
\def\tT{\tilde T}
\def\Proof{\noindent{\em Proof. }}
\def\<{\langle}
\def\>{\rangle}
%
%\newenvironment{lyxlist}[1]
%{\begin{list}{}
%{\settowidth{\labelwidth}{#1}
% \setlength{\leftmargin}{\labelwidth}
% \addtolength{\leftmargin}{\labelsep}
% \renewcommand{\makelabel}[1]{##1\hfil}}}
%{\end{list}}
%
\title{Stability results for a hierarchical size-structured population model \\ with  distributed delay}

\author[1]{Dandan Hu}
\author[2]{J\'{o}zsef Z. Farkas}
\author[1]{Gang Huang}
\affil[1]{School of Mathematics and Physics, China University of Geosciences, Wuhan, 430074, P. R. China}
\affil[2]{Departament de Matem\`{a}tiques 
 Universitat Aut\`{o}noma de Barcelona, Bellaterra, 08193,  Spain  }
\date{}                     %% if you don't need date to appear
\setcounter{Maxaffil}{0}
\renewcommand\Affilfont{\itshape\small}

%\author{Dandan Hu, J\'{o}zsef Z. Farkas, Gang Huang}

\date{}
\maketitle

\begin{abstract}
In this paper we investigate a structured population model with distributed delay. Our model incorporates two different types of nonlinearities. Specifically we assume that individual growth and mortality are affected by scramble competition, while fertility is affected by contest competition.  In particular, we assume that there is a hierarchical structure in the population, which affects mating success. The dynamical behavior of the model is analysed via linearisation by means of semigroup and spectral methods. In particular, we introduce a reproduction function and use it to derive linear stability criteria for our model.  Further we present numerical simulations to underpin the stability results we obtained. 

\end{abstract}

\section{Introduction}\label{sec:intro}

Population dynamics has been at the center of biomathematics since Malthus' exponential model of population growth. The renowned logistic equation is the classic example of a mathematical model for a self-regulating population. Simple models, like the logistic model are based on the premise that for example the per capita growth rate is solely determined by the total population size. Although these type of models tackle the issues of population self-regulation and stability, they fail to consider individual variations.  As a consequence, only population level processes can be accounted for, and their predictive power may be limited.

It is clear that an individual's activity in a specific population may be influenced not just by one-on-one interactions with other members of the population of the same physiological state, but also by interactions with individuals who are of a different state (e.g. older or younger, larger or smaller etc.) than themselves.
It has been proven that when population density rises, competition among individuals for a restricted resource increases, and individuals may compete for a variety of resources including food, space, shelter, and mates. It has been also shown that individuals of various species, such as fish, lizards, water buffalo, snails, and others, have a positive relationship between the quantity of available food and their own body size \cite{1984EEW}. A similar phenomenon occurs for  terrestrial plants, which rely on solar energy for photosynthesis. The survival of each plant is heavily influenced by the vertical component of plant size distribution, or the size related  hierarchy within a particular strand. Clearly, a taller plant is exposed to more light, and the energy is then channeled into their individual growth. Consequently, a hierarchical size-structured population model may prove to be useful to model such species, in particular when modelling intra-specific competition.  Indeed, hierarchical size-structured population models have been studied extensively in the literature. Without the desire of completeness we mention a few relevant recent papers, where the interested reader will also find further references \cite{FH2008,ASA2005,ASA2005-1,HZR2020,YDX2018}.

In the present paper we introduce and study a size-structured population model in which the birth rate is a function of an infinite dimensional interaction variable related to a hierarchy in the population (modelling contest competition), and the growth and death rates are functions of the total population size (modelling scramble competition). Hence our model incorporates two different types of nonlinearities. 
Specifically, we consider the following system that describes the dynamics of a hierarchical size-structured population model with delayed birth process.

\begin{equation}\label{1.1}
	\left\{\begin{array}{ll} 
		
		\frac{\partial p(s, t)}{\partial t}+\frac{\partial}{\partial s}(\gamma(s, P) p(s, t))+\mu(s, P) p(s, t)=0, \quad 0\leqslant s \leqslant m, \hspace{1mm} t>0,
		\\[0.8em]
		
		p(0, t)= \int_{0}^{m} \int_{-\theta}^{0} \beta(s, \tau, Q(s, t+\tau)) p(s, t+\tau) \ud \tau\ud s, \quad t > 0, 
		\\[0.8em] 
		
		p(s,\delta)=p^{0}(s,\delta), \quad  0<s\leqslant m, \hspace{1mm} \delta \in[-\theta, 0].
	\end{array}\right.
\end{equation}
Here $p(s,t)$ stands for the density of individuals with respect to size $s\in [0,m]$, where $m$ is the maximum size of an individual in the population. 
The functions $\gamma$ and $\mu$ denote individual growth and mortality rates respectively, which depend on the individual's own size $s$ as well as on the total population size
\begin{equation} \label{1.2}
	P(t)=\int_{0}^{m}p(s,t) \ud s.
\end{equation}
The function $\beta$ in Eqs. \eqref{1.1} stands for the fertility rate of an individual, which  depends on the size $s$ and a function of the population density (environment) specified as:
\begin{equation}\label{1.3}
	Q(s, t+\tau )=\alpha \int_{0}^{s} w(r) p(r, t+\tau ) \ud r+\int_{s}^{m} w(r) p(r, t+\tau ) \ud r, \quad s \in[0, m], t>0.
\end{equation}
The interaction variable $Q$ accounts for a hierarchy in the population impacting fertility/reproduction, where the parameter $ \alpha\,  (0\leq \alpha <1)$ determines the strength of the hierarchy between individuals of different sizes.  In particular,  $\alpha =0$ corresponds to an absolute hierarchical structure, in which large individuals in the population have an absolute advantage. The other limiting case  $\alpha =1$ describes a scenario with no hierarchical structure in the population, that is, each individual is in fair (scramble) competition when accessing resources. The parameter $\tau \in [-\theta,0], $ where $ \theta>0 $ is expressed as the maximum delay. The distributed delay through $Q$ is introduced here to account for the effect of delay through contest competition. Note the slightly unusual boundary condition we employ in our model \eqref{1.1}. From the physical point of view the flux of individuals at the minimal size is naturally $\gamma(0,P)p(0,t)$. Hence we tacitly assume that $\gamma(0,P)\equiv 1$, i.e. the growth rate is normalised such that newborns have the same growth speed independent of the standing population. This assumption yields a great deal of simplification in the linearisation and makes the computations much more tractable. In the rest of the paper we assume that the vital rates satisfy the following regularity assumptions:
$$
\begin{aligned}
	&\gamma=\gamma(s, P) \in C^{1}\left([0, m] ; C^{1}[0, \infty)\right),\, \gamma>0,\\
	&\mu=\mu(s, P) \in C\left([0, m] ; C^{1}[0, \infty)\right),\, \mu \geq 0,\\
	&\beta=\beta(s,\tau, Q) \in C\left([0, m]\times[-\theta, 0) ; C^{1}[0, \infty)\right),\,  \beta \geq 0,\\ 
	&w=w(s) \in C^{1}([0, m]),\,  w>0.
\end{aligned}
$$

Mathematical models of physiologically structured populations have been developed and investigated by numerous researchers over the past decades. Without completeness we mention here a few recent (and not so recent) works \cite{zth2022,JZ2005,JZ2008,F2007,D2007,OD2007,W1985,Fbn1984,hg2012,LY2009,FJ2016,jz2009,YJY2021,1998,hd2022}, where the interested reader will find further useful references. There are two main modelling approaches to build and study structured population models. The classic PDE modelling approach, which we employ here, utilises the natural density distribution of the population, and therefore the resulting models are typically formulated as first order hyperbolic equations with  non-local boundary conditions, such as the one we study here. For relatively simple PDE models one can often directly derive a renewal (integral) equation for the population birth rate, which is a delay equation. For more complicated models, in particular with infinite dimensional nonlinearities, such a direct approach is not necessarily convenient. In this case it is possible to build from basic biological principles a structured population model, which takes the form of a delay equation, or an even more abstract dynamical system. Then the question of equivalence between the two different  formulations naturally arises, which has been the subject of the recent papers: \cite{Farkas2016,Jozsef2021,Jozsef2023}.  For a linear model with distributed states at birth we studied in \cite{Farkas2016} the equivalence results we were able to establish are quite satisfying. However, for certain classes of nonlinear models the question of equivalence is much more complicated, and the delay equation formulation has an advantage in particular when studying qualitative properties via linearisation. Using the the framework of nonlinear semigroup theory it is possible to establish existence of solutions of the nonlinear PDE model on a suitable Banach space using the Crandall-Liggett theorem \cite{Crandall1971}, however the arising (solution) nonlinear semigroup cannot be shown to be continuously differentiable in general.   
This does not necessarily mean though, that using the formal linearisation we employed here stability results cannot be deduced in the PDE framework. In fact we expect that this is possible, hence we tacitly assumed that this is the case in this work. Indeed the specific examples presented in Section 7, also support this. 

To invoke the linearised stability principle from the delay formulation of a model, one can study the  equivalence of the two formulations, for example by means of a continuous map, which maps orbits of the PDE formulation to orbits of the delay formulation. For a different, size-structured predator-prey (usually referred to as a {\em Daphnia}) model, such equivalence between orbits was studied in the recent paper \cite{Jozsef2021}. Moreover, in the recent manuscript \cite{Jozsef2023}; the delay formulation of a single species hierarchical size-structured population model is studied. That model is equipped with the classical boundary condition, representing recruitment of newborn individuals, and it is assumed that mortality is constant; however the growth rate depends on an infinite dimensional environmental variable (due to the hierarchical structure), but not size explicitly. For this model we have verified directly that the characteristic equation deduced (in a similar fashion  as here) from the linearisation of the PDE model, is equivalent to the characteristic equation deduced from the linearisation of the delay formulation, which is a major indicator that linear stability results deduced in the PDE framework do indeed hold for the original nonlinear model. We also briefly discussed the equivalence between orbits in the two formulations, however due to the special delay formulation we employed (eliminating the infinite dimensional environmental variable) difficulties arise; please see Section 6 in the above mentioned manuscript for more details.

Researchers have been focusing on how to incorporate delays (e.g. maturation) in the recruitment process, in particular in the context of age-structured models, see e.g. \cite{SP2004,SP2005,yuanyuan2022,YDX2016,FXL2014}, where stability results were obtained using similar methods to the ones we deploy here.
We specifically mention the paper \cite{YDX2018}, in which a similar size-structured model was studied. However, in that model the growth and mortality rates only depend on size, and not on the total population size.
It is clear that in most populations, individual growth and survival are greatly correlated with the size of the standing population, and that if the total population size falls below a certain level the population will almost certainly die out, which is also known as Allee effect \cite{allee}. 
To account for this, in our model we incorporated growth and mortality rates, which do depend on the total population size,  which makes our model more realistic.

The main aim of our work is to introduce and study a hierarchical size-structured population model, which incorporates two significantly different types of nonlinearities and a delay in the recruitment process. We aim to demonstrate how to apply the theory of strongly continuous semigroups for this model. The rest of the paper is organised as follows. In Sect. 2, we first give conditions for the existence of a positive stationary solution of model \eqref{1.1} and formally linearise it around a steady state. We then recall some theoretical results, which we will utilise later when studying the linearisation of the model. In Sect. 3, we rewrite the linearised system as an abstract Cauchy problem, and then prove that it is governed by a strongly continuous semigroup of operators. In Sect. 4, we study important regularity properties of the governing linear semigroup. In Sect. 5, we derive an explicit characteristic equation characterising the point spectrum of the generator of the governing linear semigroup. In Sect. 6, we establish criteria for the linear stability and instability of steady states of our model. Finally, in Sect. 7, some examples will be presented and using numerical simulations we verify that the linear stability results obtained in the previous section are indeed valid for the original nonlinear model.

\section{Preliminaries}

It is clear that our model \eqref{1.1} admits the trivial stationary solution. We now establish necessary conditions for the existence of a positive stationary solution. Clearly, any non-trivial stationary solution $p_{*}(s)$ of \eqref{1.1} satisfies the following equations
\begin{equation}\label{2.6}
	\frac{\partial}{\partial s}\left[\gamma(s,P_{*}) p_{*}(s)\right]=-\mu(s,P_{*}) p_{*}(s),
\end{equation}
\begin{equation}\label{2.7}
	p_{*}(0)=\int_{0}^{m} \int_{-\theta}^{0} \beta\left(s, \tau, Q_{*}(s)\right) p_{*}(s) \d \tau \d s.
\end{equation}
The general solution of Eq. \eqref{2.6} is found as
\begin{equation}\label{2.8}
	p_{*}(s)=p_{*}(0) e^{-\int_{0}^{s} \frac{\mu(y,P_{*})+\gamma_{s}(y,P_{*})}{\gamma(y,P_{*})} \d y}.
\end{equation}
Substituting Eq. \eqref{2.8} into Eq. \eqref{2.7}, we observe that 
\begin{equation}\label{2.2}
	1=\int_{0}^{m} 
	\int_{-\theta}^{0} \beta\left(s, \tau, Q_{*}(s)\right) e^{-\int_{0}^{s} \frac{\mu(y,P_{*})+\gamma_{s}(y,P_{*})}{\gamma(y,P_{*})} \d y}  \d \tau \d s,
\end{equation}
if $p_{*}(0)\neq0$. Hence for $P \geq0$ and $ Q \geq0$, we define the basic reproduction function as
\begin{equation}\label{2.3}
	\mathscr{R}(P,Q)=\int_{0}^{m}  \int_{-\theta}^{0} \Pi(s,P) \beta(s, \tau, Q(s, t+\tau)) \d \tau \d s,
\end{equation}
where the function $\Pi$ is given for $0 \leq s \leq m$ by
\begin{equation}\label{2.4}
	\Pi(s,P)=e^{-\int_{0}^{s} \frac{\mu(y,P)+\gamma_{s}(y,P)}{\gamma(y,P)} \d y}. 
\end{equation}
By integration of Eq. \eqref{2.8}, we obtain
\begin{equation}\label{2.9}
	p_{*}(0)=\frac{P_{*}}{\int_{0}^{m} e^{-\int_{0}^{s} \frac{\mu(y,P_{*})+\gamma_{s}(y,P_{*})}{\gamma(y,P_{*})} \d y} \d s},
\end{equation}
where $P_{*}=\int_{0}^{m} p_{*}(s) \d s$ represents the positive population size at the steady state.
Finally, using Eq. \eqref{2.9} in Eq. \eqref{2.8}, we get 
\begin{equation}\label{2.5}
	p_{*}(s)=\frac{P_{*} \Pi(s,P_{*})}{\int_{0}^{m} \Pi(s,P_{*}) \d s}.
\end{equation}
Then the function $Q_{*}$, defined by
\begin{equation}\label{2.1}
	Q_{*}(s)=\alpha \int_{0}^{s} w(r) p_{*}(r) \d r+\int_{s}^{m} w(r) p_{*}(r) \d r,
\end{equation}
satisfies the equation
\begin{equation}
	\mathscr{R}\left(P_{*},Q_{*}\right)=1.
\end{equation}
We give the following existence result for the stationary solution of model \eqref{1.1}.

\begin{proposition}
	If $p_{*}(s)$ is a positive stationary solution of model \eqref{1.1}, then $p_{*}$ is defined by \eqref{2.5} and the function $Q_{*}$ satisfies \eqref{2.1} and $\mathscr{R}\left(P_{*},Q_{*}\right)=1$.
	
	% For the given vital rate functions $\beta(s,\tau, Q), \mu(s,P), \gamma(s,P)$,  $p_{*}(s)$ is a positive stationary solution of model \eqref{1.1} if and only if $p_{*}$ is determined by \eqref{2.5} with the positive population quantity $P_{*}$ satisfying $\mathscr{R}\left(P_{*},Q_{*}\right)=1$, and $Q_{*}$ satisfies Eq. \eqref{2.1}.
\end{proposition}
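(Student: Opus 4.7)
The plan is essentially to run backwards through the computations already sketched before the proposition, now treating \eqref{2.6} and \eqref{2.7} as the definition of a positive stationary solution and verifying that any such solution must take the prescribed form. First I would argue that if $p_*$ is a time-independent solution of \eqref{1.1}, then substituting $\partial_t p_*\equiv 0$ directly into the PDE gives \eqref{2.6}, and using $p_*(s,t+\tau)=p_*(s)$ and $Q(s,t+\tau)=Q_*(s)$ in the boundary integral reduces the nonlocal boundary condition to \eqref{2.7}. The standing assumption $\gamma(0,P)\equiv 1$ combined with $\gamma\in C^1$ and $\gamma>0$ means the ODE \eqref{2.6} has, for fixed $P_*$, strictly positive coefficients along $[0,m]$, so $\gamma(\cdot,P_*)p_*(\cdot)$ is differentiable and the equation can be divided by $\gamma(s,P_*)$ to yield a linear first-order ODE for $p_*$.

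Second, I would solve this ODE by the integrating-factor method: writing it as $p_*'(s)=-\frac{\mu(s,P_*)+\gamma_s(s,P_*)}{\gamma(s,P_*)}p_*(s)$ produces the exponential formula \eqref{2.8}. Integrating \eqref{2.8} over $[0,m]$ and using the definition $P_*=\int_0^m p_*(s)\,\ud s$ isolates the constant $p_*(0)$ as in \eqref{2.9}; positivity of $p_*$ then forces $p_*(0)>0$, so the division in \eqref{2.9} is legitimate. Substituting \eqref{2.9} back into \eqref{2.8} and recognizing the kernel $\Pi(s,P_*)$ defined in \eqref{2.4} gives precisely the closed form \eqref{2.5}. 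With $p_*$ now in hand, the function $Q_*$ is defined pointwise by \eqref{2.1}, simply by restricting \eqref{1.3} to the stationary profile.

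Finally, I would substitute the expression \eqref{2.8} into the boundary equation \eqref{2.7}, cancel the nonzero factor $p_*(0)$ on both sides, and identify the remaining integral with $\mathscr{R}(P_*,Q_*)$ as defined in \eqref{2.3}, yielding the scalar equation $\mathscr{R}(P_*,Q_*)=1$. I do not anticipate a genuine obstacle here; the whole proposition is essentially a bookkeeping exercise. The one subtle point worth flagging explicitly in the write-up is the role of the normalization $\gamma(0,P)\equiv 1$: without it, the flux condition would introduce an extra factor of $\gamma(0,P_*)$ into the boundary relation and hence into the definition of the reproduction function, and the clean identity $\mathscr{R}(P_*,Q_*)=1$ would need to be adjusted accordingly.
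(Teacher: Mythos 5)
Your proposal reproduces, step by step, exactly the computation the paper carries out in the paragraphs preceding the proposition (equations \eqref{2.6}--\eqref{2.1}): specialize the PDE and boundary condition to a time-independent profile, solve the resulting linear first-order ODE by an integrating factor to get \eqref{2.8}, integrate over $[0,m]$ to fix $p_*(0)$ via \eqref{2.9} and arrive at \eqref{2.5}, then substitute back into the boundary condition, cancel $p_*(0)\neq 0$, and read off $\mathscr{R}(P_*,Q_*)=1$. This is the same approach as the paper's, and your extra remark about the normalization $\gamma(0,P)\equiv 1$ being what makes the boundary relation come out cleanly is a correct and worthwhile observation, consistent with the discussion in the introduction.
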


Given a stationary solution $p_{*}$, we linearise our model \eqref{1.1} by introducing the infinitesimal perturbation $u=u(s, t)$ and making the ansatz $p=u+p_{*}$. After inserting this expression into \eqref{1.1} and
omitting all nonlinear terms, we obtain the linearised problem
\begin{equation}{\label{2-2-linearized}}
\begin{aligned}
	0= & \frac{\partial}{\partial t} u(s, t)+\gamma_{*}(s)\frac{\partial}{\partial s} u(s, t) +\nu_{*}(s) u(s, t) + \varepsilon_{*}(s)U(t),
	\\ 
	u(0, t)=  & \int_{0}^{m} \int_{-\theta}^{0} \beta_{Q}(s, \tau, Q_{*})p_{*}(s) H(s, t+\tau) \ud \tau \ud s  \\
	&  +\int_{0}^{m} \int_{-\theta}^{0} \beta(s, \tau, Q_{*}) u(s, t+\tau) \ud \tau \ud s,
	\\   
	H(s,t)= & \alpha \int_{0}^{s} w(r) u(r,t) \ud r+\int_{s}^{m} w(r) u(r,t)\ud r, 
	\\ 
	u(s, \delta)= & u^{0}(s, \delta), H(\delta)=H^{0}(\delta), \quad \delta \in[-\theta, 0], 
\end{aligned}
\end{equation}
where we have set $H(s,t)= Q(s,t)-Q_{*}(s),\, 
U(t)=P(t)-P_{*}=\int_{0}^{m} u(s, t) \d s,$  and
$$
\begin{aligned}
	\gamma_{*}(s)&=\gamma(s,P_{*}),\\
	\nu_{*}(s)&=\gamma_{s}(s, P_{*})+\mu(s, P_{*}),\\
	\varepsilon_{*}(s)&=p_{*}(s) \left(\mu_{P}(s,P_{*})+\gamma_{sP}(s,P_{*})\right)+p_{*}^{\prime}(s) \gamma_{P}(s,P_{*}).
\end{aligned}
$$

We now recall some important definitions and results from the theory of linear operators, which we are going to utilise later on. First let us recall the following characterisation theorem due to Hille and Yosida, see e.g. \cite{AP1983}. 

\begin{lemma}  \label{theorem2.2}
	A linear operator $A$ is the infinitesimal generator of a $C_{0}$-semigroup of contractions $T(t)$, $t \geq 0$ if and only if \\
	(i) $A$ is closed and $\overline{D(A)}=X$;\\
	(ii) The resolvent set $\rho(A)$ of $A$ contains $\mathbb{R}^{+}$ and for every $\lambda>0$
	\begin{equation}
		\left\|R(\lambda:A)\right\| \leq \frac{1}{\lambda}.
	\end{equation}
\end{lemma}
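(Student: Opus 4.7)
The plan is to prove the two implications separately, following the classical Yosida approximation argument.

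For necessity, assume $A$ generates a $C_0$-semigroup of contractions $\{T(t)\}_{t\ge 0}$. Closedness and density of $D(A)$ are standard: I would show that for any $x\in X$ the averages $x_h := h^{-1}\int_0^h T(s)x\,\ud s$ lie in $D(A)$ (compute $\lim_{t\downarrow 0}(T(t)-I)x_h/t$ using strong continuity) and satisfy $x_h\to x$ as $h\downarrow 0$, giving density; closedness then follows from a routine limit argument applied to the generator's definition. For the resolvent bound, I would verify the Laplace representation
\begin{equation*}
R(\lambda,A)x = \int_0^\infty e^{-\lambda t}T(t)x\,\ud t,\qquad \lambda>0,\ x\in X,
\end{equation*}
by checking that this integral defines a bounded operator sending $X$ into $D(A)$ and inverting $\lambda I - A$ on both sides. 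The contraction property $\|T(t)\|\le 1$ then yields $\|R(\lambda,A)x\|\le \lambda^{-1}\|x\|$ directly.

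For sufficiency, the key tool is the Yosida approximation. Define, for each $\lambda>0$,
\begin{equation*}
J_\lambda := \lambda R(\lambda,A),\qquad A_\lambda := \lambda A R(\lambda,A) = \lambda^2 R(\lambda,A) - \lambda I.
\end{equation*}
I would first establish the two auxiliary facts: $\lim_{\lambda\to\infty} J_\lambda x = x$ for all $x\in X$ (prove it for $x\in D(A)$ using $J_\lambda x - x = R(\lambda,A)Ax$ together with the norm bound from (ii), then extend by density and the uniform bound $\|J_\lambda\|\le 1$), and consequently $\lim_{\lambda\to\infty} A_\lambda x = Ax$ for $x\in D(A)$. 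Since each $A_\lambda$ is bounded, the exponential $T_\lambda(t):=e^{tA_\lambda}$ is well defined; writing $T_\lambda(t) = e^{-\lambda t}e^{\lambda^2 t R(\lambda,A)}$ and expanding the second exponential into a power series, the bound $\|R(\lambda,A)\|\le 1/\lambda$ gives $\|T_\lambda(t)\|\le e^{-\lambda t}e^{\lambda t}=1$, so each $T_\lambda(t)$ is a contraction.

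The decisive step, and the main obstacle, is to show that $\{T_\lambda(t)x\}$ is Cauchy in $\lambda$, uniformly for $t$ in compact sets, so that one may define $T(t)x:=\lim_{\lambda\to\infty}T_\lambda(t)x$. I would use commutativity of $A_\lambda$ and $A_\mu$ (both are polynomials in $R(\lambda,A)$, $R(\mu,A)$, which commute via the resolvent identity) and the identity
\begin{equation*}
T_\lambda(t)x - T_\mu(t)x = \int_0^t \frac{\ud}{\ud s}\bigl(T_\mu(t-s)T_\lambda(s)x\bigr)\,\ud s = \int_0^t T_\mu(t-s)T_\lambda(s)(A_\lambda - A_\mu)x\,\ud s,
\end{equation*}
which yields $\|T_\lambda(t)x - T_\mu(t)x\|\le t\,\|A_\lambda x - A_\mu x\|$ for $x\in D(A)$; the previous step shows the right-hand side vanishes as $\lambda,\mu\to\infty$. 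Since $D(A)$ is dense and the $T_\lambda(t)$ are uniformly bounded by $1$, the limit $T(t)$ exists on all of $X$, is a contraction, and the semigroup law and strong continuity transfer from the $T_\lambda$.

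Finally, I would identify the generator of $T(t)$ with $A$. For $x\in D(A)$, passing to the limit in $T_\lambda(t)x - x = \int_0^t T_\lambda(s)A_\lambda x\,\ud s$ (justified by the uniform Cauchy property and $A_\lambda x\to Ax$) gives $T(t)x - x = \int_0^t T(s)Ax\,\ud s$, so the generator $B$ of $T(t)$ extends $A$. To conclude $B=A$, I would use that both $\lambda I - A$ and $\lambda I - B$ are bijective from their domains onto $X$ for $\lambda>0$ (the former by hypothesis, the latter by the already-proved necessity direction), which forces $D(A)=D(B)$ and hence $A=B$.
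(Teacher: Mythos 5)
The paper does not prove this statement at all: it is recalled as a classical characterisation theorem due to Hille and Yosida, with a citation to Pazy's book \cite{AP1983}, and is used as a black box. Your proposal is the standard Yosida-approximation proof of precisely that theorem, and it is correct in every step: the averaging argument for density, the Laplace-transform representation of the resolvent and the resulting bound $\|R(\lambda,A)\|\le 1/\lambda$ for necessity; and for sufficiency the approximants $A_\lambda=\lambda^2R(\lambda,A)-\lambda I$, the contraction estimate $\|e^{tA_\lambda}\|\le e^{-\lambda t}e^{\lambda^2 t\|R(\lambda,A)\|}\le 1$, the Cauchy estimate $\|T_\lambda(t)x-T_\mu(t)x\|\le t\|A_\lambda x-A_\mu x\|$ on $D(A)$ via commutativity of the resolvents, and the identification $B=A$ through surjectivity of $\lambda I-A$ and $\lambda I-B$. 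Since there is no in-paper proof to compare against, the only observation worth making is that you have reconstructed the argument the cited reference itself gives, so the proposal is fully consistent with the paper's use of the lemma.
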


In fact, Lemma \ref{theorem2.2} implies that any Hille-Yosida operator gives rise to a $C_{0}$-semigroup on the closure of its domain.

\begin{definition}\label{defination2.3}
	Let \((A,D(A))\) be  a linear operator on the Banach space $X$ and set
	\begin{align*}
		&X_0 = (\overline{D(A)}, \left\| \cdot\right\|) ; \\
		&A_{0} x=A x, \text { for } x \in D\left(A_{0}\right)=\left\{x \in D(A): A x \in X_{0}\right\}.
	\end{align*}
	Then the operator \(\left(A_{0}, D\left(A_{0}\right)\right)\) is called the part of \(A\) in \(X_{0}\).
\end{definition}

Particularly, if \((A, D(A))\) is a Hille-Yosida operator, its part \(\left(A_{0}, D\left(A_{0}\right)\right)\) generates a strongly continuous semigroup \(\left(T_{0}(t)\right)_{t \geq 0}\) on \(X_{0}\) (see e.g. \cite{AP1983}).

\begin{lemma}(see e.g. \cite{KJ2000}) \label{lemma2.4}
	Let the operator $A$ be a Hille-Yosida operator on a Banach space $X$. If the operator $B$ is a bounded linear operator on $X$, then the operator $A+B$ is also a Hille-Yosida operator on the Banach space $X$.
\end{lemma}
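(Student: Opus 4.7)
The plan is to verify the defining conditions of a Hille--Yosida operator (understood in the extended sense permitted by Definition \ref{defination2.3}, which does not force dense domain or contraction-type bounds) for $A+B$ directly. First I would check the structural hypotheses. Since $B\in\mathcal{L}(X)$ is everywhere defined, $D(A+B)=D(A)$, so $\overline{D(A+B)}=\overline{D(A)}$ is automatic. Closedness of $A+B$ follows from closedness of $A$ together with boundedness of $B$: if $x_n\in D(A)$ with $x_n\to x$ and $(A+B)x_n\to y$, then $Ax_n=(A+B)x_n-Bx_n\to y-Bx$, and closedness of $A$ yields $x\in D(A)$ with $(A+B)x=y$.

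The core of the argument is the resolvent estimate. I would exploit the algebraic factorisation
$$\lambda I-(A+B)=(\lambda I-A)\bigl(I-R(\lambda:A)B\bigr),$$
valid on $D(A)$. Writing the Hille--Yosida bound for $A$ in the type $(M,\omega)$ form $\|R(\lambda:A)\|\leq M/(\lambda-\omega)$, one gets $\|R(\lambda:A)B\|\leq M\|B\|/(\lambda-\omega)<1$ whenever $\lambda>\omega+M\|B\|$. Hence $I-R(\lambda:A)B$ is invertible by Neumann series, $\lambda\in\rho(A+B)$, and
$$R(\lambda:A+B)=\sum_{k=0}^{\infty}\bigl[R(\lambda:A)B\bigr]^{k}R(\lambda:A),$$
yielding the first-power bound $\|R(\lambda:A+B)\|\leq M/(\lambda-\omega-M\|B\|)$ for all such $\lambda$.

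The main obstacle is that the Hille--Yosida characterisation requires bounds on every power $\|R(\lambda:A+B)^{n}\|$, not merely the first, and the naive expansion of $R(\lambda:A+B)^{n}$ as a sum of products $[R(\lambda:A)B]^{k_j}R(\lambda:A)$ does not give the clean form $M'/(\lambda-\omega')^{n}$ without care. The standard route around this is the renorming argument of Feller--Miyadera--Phillips: define the equivalent norm
$$|\!|\!|x|\!|\!|=\sup_{n\geq 0,\,\lambda>\omega}\frac{(\lambda-\omega)^{n}}{M}\bigl\|R(\lambda:A)^{n}x\bigr\|,$$
which satisfies $\|x\|\leq |\!|\!|x|\!|\!|\leq M\|x\|$ and makes $A-\omega$ dissipative (i.e.\ of contraction type). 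In this equivalent norm $B$ is still bounded, with norm at most $M\|B\|$, and the classical bounded perturbation result for contraction-type Hille--Yosida operators applies directly to $A+B$. Transporting the estimates back to the original norm yields the Hille--Yosida property for $A+B$ with constants $(M,\omega+M\|B\|)$. I would cite \cite{KJ2000} for the renorming step rather than reproduce the bookkeeping, since the subtlety is technical rather than conceptual.
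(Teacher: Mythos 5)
The paper does not prove this lemma: it is invoked as a known result with a pointer to Engel--Nagel \cite{KJ2000}, so there is no internal proof to compare against. Your proposal is the standard textbook route (factorisation of $\lambda I-(A+B)$ through $I-R(\lambda:A)B$, Neumann series for the first resolvent power, then the Feller--Miyadera--Phillips renorming to handle powers $\|R(\lambda:A+B)^{n}\|$), and it is essentially correct, including the key observation that once $M=1$ the first-power bound $\|R(\lambda,A+B)\|\le 1/(\lambda-\omega-\|B\|)$ automatically propagates to all powers via submultiplicativity, and that the renorming argument never uses density of $D(A)$, so it applies verbatim to non-densely-defined Hille--Yosida operators.

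One small slip: with the normalisation you chose, $|\!|\!|x|\!|\!|=\sup_{n\ge 0,\lambda>\omega}\frac{(\lambda-\omega)^n}{M}\|R(\lambda:A)^n x\|$, the $n=0$ term gives $\|x\|/M$, so the equivalence reads $\frac{1}{M}\|x\|\le|\!|\!|x|\!|\!|\le\|x\|$, not $\|x\|\le|\!|\!|x|\!|\!|\le M\|x\|$ as you wrote; to get the latter, drop the factor $1/M$ from the definition. This does not affect the argument, but you should also say a word about why $\sup_{n,\lambda}$ actually yields a norm in which $\|R(\lambda,A)\|\le(\lambda-\omega)^{-1}$ \emph{uniformly} in $\lambda$ — this needs the resolvent identity and is precisely where the double supremum earns its keep; a single fixed $\mu$ would only give the contraction bound at that $\mu$.
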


\begin{definition}
	Let $(A, D(A))$ be  a closed linear operator on a Banach space $X$. Then the point spectrum of $A$, denoted by $\sigma_{p}(A)$, is defined as
	$$
	\sigma_{p}(A):=\{\lambda \in \mathbb{C}: \lambda I-A : D(A) \rightarrow X\text { is not injective}\},
	$$
	%$$ \tau(A):=\{\lambda \in \mathbb{C}: \lambda I-A: D(A) \rightarrow X \text { is not bijective}\}, $$
	and a crucial quantity $s(A)$, called the spectral bound of $A$, is denoted by
	$$
	s(A):=\sup \{\operatorname{Re} \lambda: \lambda \in \sigma(A)\}.
	$$
\end{definition}

\begin{definition}
	If $(A, D(A))$ is a generator of a $C_{0}$-semigroup $(T(t))_{t \geq 0}$, we 
	denote by $\omega_{0}(A)$ the growth bound of the semigroup $(T(t))_{t \geq 0}$, and define it as
	$$
	\omega_{0}(A):=\lim _{t \rightarrow+\infty} t^{-1} \log \|T(t)\|.
	$$
\end{definition}

The following lemmas 
(see e.g. \cite{KJ2000} and \cite{SL1993}) will be used to establish the positivity of the governing $C_{0}$-semigroup \((T(t))_{t \geq 0}\).

\begin{lemma}(Riesz–Schauder theory) \label{lemma2.8}
	Let $(A,D(A))$  be a compact operator on the Banach space $X$, then\\
	(i) $ 0 \in \sigma(A)$ when dim $ X=\infty$;\\
	%, where dim $X$ is the dimension of the space $X$ and $\tau(A)$ is the spectrum of the operator $A$
	% $ 0 \in \tau_{p}(A)$, unless dim $ X<\infty$
	(ii) $\sigma(A) \backslash \{0\}$ =  $\sigma_{p}(A)  \backslash \{0\}$;\\
	(iii) $\sigma(A)$ is a discrete set having no limit points except  0.
	% (a) 任何复数y=0或是A的正则值或是A的特征值，二者必居其一。当y=0是T的特征值时，对应的特征向量空间是有限维的；（b）a<p, 其中p为算子A的点谱；（c）A 至多以0为聚点。
\end{lemma}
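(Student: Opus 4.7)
The plan is to follow the classical proof of the Riesz--Schauder theorem, which is standard functional analysis but naturally splits into three independent arguments corresponding to the three claims. Since $A$ is compact, I may assume (as is conventional) that $D(A)=X$ and $A$ is bounded.

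For (i), I will argue by contradiction. If $0\notin \sigma(A)$, then $\lambda I-A$ with $\lambda=0$ is invertible, i.e.\ $A$ has a bounded inverse $A^{-1}$. Writing $I=A\cdot A^{-1}$ and using the fact that the composition of a compact operator with a bounded operator is compact, $I$ would be a compact operator on $X$. But when $\dim X=\infty$, the closed unit ball fails to be relatively compact by Riesz's lemma, so $I$ is not compact. Contradiction.

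For (ii), fix $\lambda\neq 0$ and let $T_\lambda:=\lambda I-A$. First I would show that $\ker T_\lambda$ is finite dimensional: on this subspace $A$ coincides with $\lambda I$, so the identity on $\ker T_\lambda$ is compact, which forces $\dim\ker T_\lambda<\infty$. Next I would show $\operatorname{range} T_\lambda$ is closed via the standard Riesz's lemma / quotient-space argument. Then I would invoke the Riesz decomposition: the ascending chain $\ker T_\lambda^n$ and descending chain $\operatorname{range} T_\lambda^n$ both stabilise at some finite index $k$, and $X=\ker T_\lambda^k\oplus \operatorname{range} T_\lambda^k$. From this one reads off the Fredholm alternative: $T_\lambda$ is injective if and only if it is surjective, if and only if it is invertible. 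Hence $\lambda\in\sigma(A)\setminus\{0\}$ implies $T_\lambda$ is non-injective, i.e.\ $\lambda\in\sigma_p(A)$; the reverse inclusion is trivial.

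For (iii), suppose to the contrary there exist distinct nonzero eigenvalues $\lambda_n$ with $\lambda_n\to\lambda\neq 0$. Choose eigenvectors $x_n$ and set $Y_n=\operatorname{span}(x_1,\ldots,x_n)$. Since eigenvectors for distinct eigenvalues are linearly independent, $Y_{n-1}\subsetneq Y_n$, so Riesz's lemma provides unit vectors $y_n\in Y_n$ with $\operatorname{dist}(y_n,Y_{n-1})\geq 1/2$. A direct computation shows $(\lambda_n I-A)Y_n\subset Y_{n-1}$, so for $m<n$,
\[
\frac{A y_n}{\lambda_n}-\frac{A y_m}{\lambda_m}=y_n-\Bigl(y_n-\frac{Ay_n}{\lambda_n}\Bigr)-\frac{Ay_m}{\lambda_m}\in y_n+Y_{n-1},
\]
so the differences have norm at least $1/2$. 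Since $\{y_n/\lambda_n\}$ is bounded (as $\lambda_n\to\lambda\neq 0$) and $A$ is compact, $\{Ay_n/\lambda_n\}$ should admit a convergent subsequence, contradicting the above estimate. Hence $\sigma(A)$ has no accumulation points other than possibly $0$, and combined with (ii) and the Riesz decomposition, $\sigma(A)$ is at most countable with $0$ as its only possible limit point.

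The argument involves no real obstacle as this is a textbook theorem; the most delicate step is the stabilisation of the kernel/range chains in (ii), which must be handled with care so that the Fredholm alternative follows cleanly without circular reasoning.
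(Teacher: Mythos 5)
The paper does not actually prove Lemma~\ref{lemma2.8}; it records it as a citation to standard references (Engel--Nagel, and Mukherjea--Pothoven). Your proposal reconstructs the classical Riesz--Schauder argument in full, and the reasoning is sound: (i) the compactness of $I=A\cdot A^{-1}$ against Riesz's non-compactness of the unit ball in infinite dimensions; (ii) finite-dimensional kernel, closed range, stabilisation of the ascent/descent chains, and the Fredholm alternative for $\lambda\ne 0$; (iii) the Riesz-lemma construction of a sequence $y_n$ with $\operatorname{dist}(y_n,Y_{n-1})\ge 1/2$, combined with $(\lambda_nI-A)Y_n\subset Y_{n-1}$, forcing $\{Ay_n/\lambda_n\}$ to have no Cauchy subsequence, contradicting compactness when $\lambda_n\to\lambda\ne 0$. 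One small presentational note: in (iii) you should make explicit that $A Y_m\subset Y_m\subset Y_{n-1}$ so that $Ay_m/\lambda_m\in Y_{n-1}$, which is what closes the distance estimate; as written the inclusion $y_n + Y_{n-1}$ is asserted but the membership of $Ay_m/\lambda_m$ in $Y_{n-1}$ is left implicit. Otherwise this is a correct proof of a result the paper simply invokes without proof.
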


\begin{lemma}\label{le2.7} A strongly continuous semigroup \((T(t))_{t \geq 0}\) on a Banach lattice \(X\) is positive if and only if the resolvent \(R(\lambda, A)\) of its generator $A$ is positive for all sufficiently large \(\lambda\).
\end{lemma}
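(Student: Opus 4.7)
The plan is to prove the two implications separately, using standard representation formulas connecting a $C_0$-semigroup with its resolvent.

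For the forward direction, assume $(T(t))_{t\geq 0}$ is positive. The Laplace transform representation of the resolvent states that for all $\lambda > \omega_0(A)$ and $x \in X$,
\begin{equation*}
R(\lambda,A)x = \int_0^\infty e^{-\lambda t} T(t) x \, \ud t.
\end{equation*}
If $x \geq 0$, then $T(t)x \geq 0$ for every $t \geq 0$ by positivity, and since the positive cone in the Banach lattice $X$ is closed and invariant under multiplication by nonnegative scalars, the improper Bochner integral of $e^{-\lambda t} T(t)x$ lies in the positive cone. Hence $R(\lambda,A)x \geq 0$ for every $\lambda$ sufficiently large, which is exactly positivity of the resolvent.

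For the reverse direction, suppose $R(\lambda,A) \geq 0$ for all $\lambda \geq \lambda_0$ sufficiently large. I would invoke the Post--Widder type exponential formula (equivalently, the Hille--Yosida approximation): for every $x \in X$ and $t>0$,
\begin{equation*}
T(t)x = \lim_{n \to \infty} \left(\frac{n}{t}\right)^n R\!\left(\frac{n}{t},A\right)^n x,
\end{equation*}
with the limit taken in the norm of $X$. For $n$ large enough, $n/t \geq \lambda_0$, so $R(n/t, A)$ is a positive operator; compositions of positive operators are positive, and the scalar $(n/t)^n$ is positive. Therefore if $x \geq 0$, each term in the sequence lies in the positive cone, and since the positive cone of a Banach lattice is norm-closed, the limit $T(t)x$ is also positive. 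This yields $T(t)\geq 0$ for every $t>0$, and trivially for $t=0$.

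The main obstacle, and the only nontrivial ingredient, is justifying the exponential/Post--Widder formula in the present generality; this is however a classical consequence of the Hille--Yosida construction and may be cited from the same references \cite{KJ2000,AP1983} that are already used in the paper. Closedness of the positive cone in a Banach lattice, and the obvious fact that the cone is stable under positive scalar multiplication, composition, and Bochner integration of continuous positive-valued maps, are the only structural features of $X$ that enter the argument. The equivalence then follows immediately from the two displayed formulas.
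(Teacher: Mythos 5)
The paper does not prove Lemma \ref{le2.7}: it cites it as a standard result from the references \cite{KJ2000} and \cite{SL1993}, so there is no internal proof to compare against. Your argument is nonetheless a correct and complete proof of the classical fact, and it is exactly the standard one. The forward implication via the Laplace transform $R(\lambda,A)x=\int_0^\infty e^{-\lambda t}T(t)x\,\ud t$ for $\lambda>\omega_0(A)$, combined with norm-closedness of the positive cone, is routine. For the converse, the formula you invoke, $T(t)x=\lim_{n\to\infty}\bigl(\tfrac{n}{t}R(\tfrac{n}{t},A)\bigr)^n x$, is usually called the Euler or exponential formula rather than Post--Widder (Post--Widder proper involves $(-1)^n$ times derivatives of the Laplace transform), but it is indeed a standard consequence of the Hille--Yosida theory and available in \cite{KJ2000,AP1983}, and your deduction from it---positivity of each factor for large $n$, positivity of compositions, and closedness of the cone---is sound. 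So the proposal is correct and, terminology aside, is the canonical proof one would write out if the paper had chosen to include one.
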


\section{Existence of a $C_{0}$-semigroup governing the linearised system}

In this section, to prove the well-posedness of the linearised problem \eqref{2-2-linearized}, 
we set up a $C_{0}$-semigroup framework on a suitable Banach lattice. For any positive stationary solution $p_{*}(s)$, we denote the Banach space 
$$\mathfrak{X}=L^{1}([0, m])$$
with the usual norm  \( \|\cdot\| \) and on this space we introduce the following operators
\begin{equation*} 
	(\mathfrak{A}_{m}f)(s)= -\gamma_{*}(s)f^{\prime}(s) - \nu_{*}(s)f(s) \text{ for } s\in[0,m]
\end{equation*}
with domain  \(D\left(\mathfrak{A}_{m}\right)=W^{1,1}(0, m) \),
\begin{equation*}
	(\mathfrak{B}_{m}f)(s)= -\varepsilon_{*}(s) \int_{0}^{m} f(s) \d s  \text{ for } s\in[0,m]
\end{equation*}
with domain \(D\left(\mathfrak{B}_{m}\right)=L^{1}(0, m)\).
The $m$ subscript indicates that the operators are specified on their maximum domain.
Moreover, we define the boundary operator
$$
\mathcal{P}: D\left(\mathfrak{A}_{m}\right) \rightarrow \mathbb{C}, \hspace{1mm} \mathcal{P}(f):=f(0),
$$
which  is used to express the boundary condition (\cite{SP2004}). Next, we introduce the delay operator 
$$
\begin{aligned}
	\Phi(y)= &\int_{0}^{m} \int_{-\theta}^{0} \beta(s, \tau, Q_{*}) y(s, \tau) \ud \tau \ud s\\
	&+ \int_{0}^{m} \int_{-\theta}^{0} p_{*}(s) \beta_{Q}(s, \tau, Q_{*}) \left(\alpha \int_{0}^{s} w(r) y(r,\tau) \d r+\int_{s}^{m} w(r) y(r,\tau)\d r \right) \ud \tau \ud s ,
\end{aligned}
$$
where \(y \in E=L^{1}([-\theta, 0], \mathfrak{X} ) \cong L^{1}((0,m) \times [-\theta, 0])\).
Then with these notations Eqs. \eqref{2-2-linearized} can be cast in the form of an abstract boundary delay system:
\begin{equation}\label{cauchy}
	\left\{\begin{array}{rl} 
		\frac{\ud}{\ud t}
		u(t) 
		= & \left(\mathfrak{A}_{m}+\mathfrak{B}_{m}\right)
		u(t), \quad t \geqslant 0 ,
		\\[0.8em]
		\mathcal{P}
		u(t)=& \Phi\left(
		u_{t} \right), 
		\\[0.8em] 
		u_{0}(t)=&
		u^{0}(t), \quad t \in[-\theta, 0],
	\end{array}\right.
\end{equation}
where $u^{0}(t):=u^{0}(\cdot, t)$,  \(u:[0,+\infty) \rightarrow L^{1}(0, m)\) is defined as \(u(t) := u(\cdot, t)\), and \(u_{t}:[-\theta, 0] \rightarrow\)
\(L^{1}(0, m)\) is the history segment defined in the usual way as
$$
u_{t}(\tau) :=  u(t+\tau), \quad \tau \in[-\theta, 0].
$$
In order to transform \eqref{cauchy} into an abstract Cauchy problem, on the Banach space \(E\), we introduce the differential operator
$$
\left(Y_{m}y\right)(\tau) := \frac{\ud}{\ud \tau}
y(\tau)
$$
with domain \(D\left(Y_{m}\right)=W^{1,1}([-\theta, 0],\mathfrak{X}).\) Moreover, we define  another boundary operator \(G: D\left(Y_{m}\right) \rightarrow \mathfrak{X}\)  as
$$
Gy:= y(0).
$$

\noindent Next, we consider the product space \(\mathscr{X}:=E \times \mathfrak{X}\), on which we define the matrix operator
$$
\mathscr{A}=\mathscr{A}_{1}+\mathscr{A}_{2},
$$
where
\begin{equation}\label{a1a2}
	\mathscr{A}_{1}:=\left(\begin{array}{cc}
		\hspace{0.2em} Y_{m} & 0 \\
		\hspace{0.2em} 0 & \hspace{0.2em} \mathfrak{A}_{m}
	\end{array}\right), \hspace{2mm} \mathscr{A}_{2}:=\left(\begin{array}{cc}
		\hspace{0.2em}0 & 0 \\
		\hspace{0.2em}0 & \hspace{0.3em} \mathfrak{B}_{m}
	\end{array}\right)
\end{equation}
with domain
\begin{align*}
	\begin{array}{rl}
		D(\mathscr{A}) & =D\left(\mathscr{A}_{1}\right) \\[1ex]
		& =\left\{ \hspace{0.2em}
		\left(\begin{array}{c}
			y \\
			f
		\end{array}\right)
		\in D\left(Y_{m}\right) \times D\left(\mathfrak{A}_{m}\right):
		%\end{array} 
		%\quad 
		\begin{array}{l}
			Gy=f \\
			\mathcal{P}f=\Phi y
		\end{array}\right\}.
	\end{array}
\end{align*}
We get the following abstract Cauchy problem 
\begin{equation}\label{uu} 
	 \left\{\begin{array}{l}
		\mathscr{U}^{\prime}(t)=\mathscr{A} \mathscr{U}(t), \quad t \geqslant 0, \\ 
		\mathscr{U}(0)=\mathscr{U}_{0},
	\end{array}\right.
\end{equation}
which corresponds to the operator\((\mathscr{A},D(\mathscr{A}))\) on the space \(\mathscr{X}\). Here $
\mathscr{U}(t)=\left(\begin{array}{c}
	
	u_{t}\\
	
	u(t) 
\end{array}\right)
$ denotes the function \(\mathscr{U}:[0,+\infty) \rightarrow \mathscr{X}\).

To establish the well-posedness of the abstract Cauchy problem \eqref{uu}, we will show that $ (\mathscr{A},D(\mathscr{A}))$ generates a $C_{0}$-semigroup on \(\mathscr{X}\).

First of all, we consider the Banach space \(\mathcal{X}:=E \times \mathfrak{X} \times \mathfrak{X} \times \mathbb{C}\) and the matrix operator
$$
\mathcal{A}:=\left(\begin{array}{cc|cc}
	Y_{m} \hspace{0.1em} & 0 \hspace{0.5em} & 0 & \hspace{0.1em} 0 \\
	-G \hspace{0.1em} & 0 \hspace{0.5em} & Id & \hspace{0.1em} 0 \\
	\hline 0 \hspace{0.1em} & 0 \hspace{0.5em} & \hspace{0.1em} \mathfrak{A}_{m} &  \hspace{0.1em} 0 \\
	\Phi \hspace{0.1em} & 0 \hspace{0.5em} & \hspace{0.1em} -\mathcal{P} & \hspace{0.1em}  0
\end{array}\right)
$$
with domain \(D(\mathcal{A})=D\left(Y_{m}\right) \times\{0\} \times D\left(\mathfrak{A}_{m}\right) \times\{0\}\).

\begin{proposition}\label{prop3.2}
	The operator \((\mathcal{A}, D(\mathcal{A}))\) is a Hille-Yosida operator on the Banach space \(\mathscr{X}\).
\end{proposition}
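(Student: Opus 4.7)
My plan is to exploit the bounded-perturbation result of Lemma~\ref{lemma2.4}. The delay functional $\Phi:E\to\mathbb{C}$ is a bounded linear form (since $\beta$, $\beta_{Q}$, $p_{*}$ and $w$ are all bounded by the regularity assumptions on the vital rates), so the matrix operator $\mathcal{C}$ obtained by placing $\Phi$ in position $(4,1)$ and zeros elsewhere is bounded on all of $\mathcal{X}$. Writing $\mathcal{A}=\mathcal{A}_{0}+\mathcal{C}$, where $\mathcal{A}_{0}$ is the matrix operator obtained from $\mathcal{A}$ by deleting the entry $\Phi$ (with the same domain $D(\mathcal{A})$), Lemma~\ref{lemma2.4} reduces the claim to showing that $(\mathcal{A}_{0},D(\mathcal{A}))$ is a Hille--Yosida operator on $\mathcal{X}$. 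The advantage of this decomposition is that, with the coupling via $\Phi$ removed, the resolvent equation for $\mathcal{A}_{0}$ decouples completely.

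\textbf{Explicit resolvent.} Given $\mathcal{F}=(g,h,k,c)^{T}\in\mathcal{X}$, the equation $(\lambda-\mathcal{A}_{0})\mathcal{U}=\mathcal{F}$ with $\mathcal{U}=(y,0,f,0)^{T}\in D(\mathcal{A})$ unfolds into
$$
(\lambda-Y_{m})y=g,\quad y(0)=h+f,\quad (\lambda-\mathfrak{A}_{m})f=k,\quad f(0)=c.
$$
The last two lines form an ODE in $s$ for $f$ alone, integrated as
$$
f(s)=e^{-\Gamma_{\lambda}(s)}\Bigl(c+\int_{0}^{s}e^{\Gamma_{\lambda}(r)}\frac{k(r)}{\gamma_{*}(r)}\,\ud r\Bigr),\qquad \Gamma_{\lambda}(s):=\int_{0}^{s}\frac{\lambda+\nu_{*}(r)}{\gamma_{*}(r)}\,\ud r,
$$
and, once $f$ is known, the first two give
$$
y(\tau)(s)=e^{\lambda\tau}\bigl(h(s)+f(s)\bigr)+\int_{\tau}^{0}e^{\lambda(\tau-\sigma)}g(\sigma)(s)\,\ud\sigma,\qquad \tau\in[-\theta,0].
$$
Closedness of $\mathcal{A}_{0}$ on $D(\mathcal{A})$ follows from closedness of the maximal operators $Y_{m}$ and $\mathfrak{A}_{m}$, together with the continuity of the trace maps $G$ and $\mathcal{P}$ in the relevant graph norms.

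\textbf{Estimates and main obstacle.} Since $\gamma_{*}$ is bounded above and away from zero and $\nu_{*}$ is bounded, standard Fubini arguments give $\int_{0}^{m}e^{-\Gamma_{\lambda}(s)}\,\ud s=O(1/\lambda)$ and $\int_{-\theta}^{0}e^{\lambda\tau}\,\ud\tau=O(1/\lambda)$ as $\lambda\to\infty$, hence for $\lambda>\omega$ with $\omega$ sufficiently large
$$
\|f\|_{\mathfrak{X}}\leq \frac{C}{\lambda-\omega}\bigl(|c|+\|k\|_{\mathfrak{X}}\bigr),\qquad \|y\|_{E}\leq \frac{C}{\lambda-\omega}\bigl(\|g\|_{E}+\|h\|_{\mathfrak{X}}+\|f\|_{\mathfrak{X}}\bigr),
$$
which combine to $\|R(\lambda,\mathcal{A}_{0})\|\leq M/(\lambda-\omega)$. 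The real difficulty will be upgrading this single-resolvent bound to the uniform estimate on all powers, $\|(\lambda-\omega)^{n}R(\lambda,\mathcal{A}_{0})^{n}\|\leq M$ for every $n\in\mathbb{N}$ and every $\lambda>\omega$, required by the Hille--Yosida theorem: iterating the single-resolvent inequality naively only gives $M^{n}$ in the numerator. I expect to remove this factor either by passing to the equivalent norm $\|\mathcal{F}\|_{\sim}:=\sup_{n\geq0}\sup_{\lambda>\omega}(\lambda-\omega)^{n}\|R(\lambda,\mathcal{A}_{0})^{n}\mathcal{F}\|$, or, more concretely, by exploiting the fact that in the explicit formulas above the resolvent acts as a Laplace transform against positive exponential kernels in $\tau$ and $s$; their $n$-fold convolution has the correct decay order and yields the desired bound. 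Once $\mathcal{A}_{0}$ is shown to be Hille--Yosida, Lemma~\ref{lemma2.4} immediately transfers the property to $\mathcal{A}=\mathcal{A}_{0}+\mathcal{C}$.
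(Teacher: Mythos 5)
Your decomposition $\mathcal{A}=\mathcal{A}_{0}+\mathcal{C}$ removes only the entry $\Phi$, so $\mathcal{A}_{0}$ still carries the $Id$ in position $(2,3)$ that couples $y$ and $f$ through the boundary condition $y(0)=h+f$. As you yourself observe, the resulting resolvent estimate is of the form $\|R(\lambda,\mathcal{A}_{0})\|\leq C/(\lambda-\omega)$ with, in general, $C>1$, and iterating it gives the useless $C^{n}$. This is a genuine gap, and neither of your proposed repairs closes it: the renormed quantity $\sup_{n}\sup_{\lambda>\omega}(\lambda-\omega)^{n}\|R(\lambda,\mathcal{A}_{0})^{n}\mathcal{F}\|$ is only known to be finite \emph{after} one knows that $\mathcal{A}_{0}$ is Hille--Yosida, so invoking it here is circular; and the ``$n$-fold convolution of positive kernels'' route, while in principle possible, requires a separate and nontrivial estimate that you do not supply.

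The paper avoids all of this by pulling \emph{both} off-diagonal entries $Id$ and $\Phi$ into the bounded perturbation. Writing
$$
\mathcal{A}=\mathcal{A}_{1}+\mathcal{A}_{2},\qquad
\mathcal{A}_{2}=\left(\begin{array}{cccc}0&0&0&0\\0&0&Id&0\\0&0&0&0\\ \Phi&0&0&0\end{array}\right),
$$
the operator $\mathcal{A}_{2}$ is bounded on all of $\mathcal{X}$ (since $Id$ is bounded and $\Phi$ is a bounded functional under the stated regularity of $\beta$, $\beta_{Q}$, $p_{*}$, $w$), and now the resolvent equation for $\mathcal{A}_{1}$ decouples completely: $y$ solves $(\lambda-Y_{m})y=g$, $y(0)=h$, and $f$ solves $(\lambda-\mathfrak{A}_{m})f=k$, $f(0)=c$, independently. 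The resolvent is the block matrix with entries $R(\lambda,Y_{0})$, $\epsilon_{\lambda}$, $R(\lambda,\mathfrak{A}_{0})$, $\varphi_{\lambda}$, each of which is estimated by $1/\lambda$ in $L^{1}$, so $\|\lambda R(\lambda,\mathcal{A}_{1})\|\leq 1$ directly. That contraction bound automatically controls all powers, so $\mathcal{A}_{1}$ is Hille--Yosida with $M=1$, and Lemma~\ref{lemma2.4} then transfers the property to $\mathcal{A}$ in one step. In short: your strategy is the right one (explicit resolvent plus bounded perturbation), but you should absorb $Id$ into the perturbation along with $\Phi$; that removes the coupling that is the whole source of your difficulty.
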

\begin{proof}
	The operator \(\mathcal{A}\) can be written as the sum of two operators on \(\mathcal{X}\) as \(\mathcal{A}=\) \(\mathcal{A}_{1}+\mathcal{A}_{2}\), where
	$$
	\mathcal{A}_{1}=\left(\begin{array}{cc|cc}
		Y_{m} \hspace{0.1em} & 0 \hspace{0.5em} & 0 & \hspace{0.1em} 0 \\
		-G \hspace{0.1em} & 0 \hspace{0.5em} & 0 & \hspace{0.1em} 0 \\
		\hline 0 \hspace{0.1em} & 0 \hspace{0.5em} & \hspace{0.1em} \mathfrak{A}_{m} &  \hspace{0.1em} 0 \\
		0 \hspace{0.1em} & 0 \hspace{0.5em} & \hspace{0.1em} -\mathcal{P} & \hspace{0.1em}  0
	\end{array}\right), \quad
	\mathcal{A}_{2}=\left(\begin{array}{cc|cc}
		0 \hspace{0.4em} & 0 \hspace{0.3em} & \hspace{0.1em} 0 & \hspace{0.2em} 0 \\
		0 \hspace{0.4em} & 0 \hspace{0.3em} & \hspace{0.15em} I d & \hspace{0.2em} 0 \\
		\hline 0 \hspace{0.4em} & 0 \hspace{0.3em} & \hspace{0.1em} 0 &  \hspace{0.2em} 0 \\
		\Phi \hspace{0.4em} & 0 \hspace{0.3em} &\hspace{0.1em} 0 &  \hspace{0.2em} 0
	\end{array}\right)
	$$
	with \(D\left(\mathcal{A}_{1}\right)=D(\mathcal{A})\) and \(D\left(\mathcal{A}_{2}\right)=\mathcal{X} .\)

	It is easy to see that the restriction \(\left(Y_{0}, D\left(Y_{0}\right)\right)\) of \(Y_{m}\) to the kernel of $G$ generates the nilpotent left shift semigroup \(\left(\mathfrak{T}_{0}(t)\right)_{t \geqslant 0}\) on \(E\) which is given by
	$$
	(\mathfrak{T}_{0}(t)y)(s,\tau)=\left\{\begin{array}{ll}
		y(s, t+\tau), & \text { if } t+\tau \leqslant 0, \\
		0, & \text { if } t+\tau>0.
	\end{array}\right.
	$$
	Similarly, one can verify by direct computations that
	the restriction 
	\(\left(\mathfrak{A}_{0}, D\left(\mathfrak{A}_{0}\right)\right)\) of \(\mathfrak{A}_{m}\) to the kernel of \(\mathcal{P}\)
	generates the positive semigroup \(\left(\Lambda_{0}(t)\right)_{t \geqslant 0}\) on \(\mathfrak{X}\) defined as
	$$
	\begin{array}{l}
		(\Lambda_{0}(t)f)(s)
		=\left\{\begin{array}{c}
			e^{-\int_{\Gamma^{-1}(\Gamma(s)-t)}^{s} \frac{\nu_{*}(y)}{\gamma_{*}(y)} \d y} f\left(\Gamma^{-1}(\Gamma(s)-t)\right) ,
			\text { if } t \leq \Gamma(s), \\[1ex]
			0, \text { if } t>\Gamma(s),
		\end{array}\right.
	\end{array}
	$$
	where
	\begin{equation}\label{gamma}
		\Gamma(s)=\int_{0}^{s} \frac{1}{\gamma_{*}(y)} \ud y .  
	\end{equation}
	
	Next we demonstrate that  \(\mathcal{A}_{1}\) is a Hille-Yosida operator. To this end note that  for any \(\lambda \in \mathbb{C} \text{ and } \tilde{f} \neq 0\), the resolvent equation
	$$
	\left(\lambda I-\mathfrak{A}_{0}\right)f= \tilde{f}
	$$
	has the implicit solution
	\begin{equation}\label{fg} 
		f(s)=e^{-\int_{0}^{s} \frac{\lambda+\nu_{*}(y)}{\gamma_{*}(y)} \ud y} \int_{0}^{s} \frac{\tilde{f}(\alpha)}{\gamma_{*}(\alpha)} e^{\int_{s}^{\alpha} \frac{\lambda+\nu_{*}(a)}{\gamma_{*}(a)} \ud a} \ud \alpha. 
	\end{equation}
	It shows that \(\sigma\left(\mathfrak{A}_{0}\right)= \emptyset \text{ as } \gamma_{*}(s)>0.\)
	In the same way,  \(\sigma\left(Y_{0}\right)=\emptyset\).  Then for \(\lambda \in \mathbb{C}\), we have the resolvent 
	$$
	R\left(\lambda, \mathcal{A}_{1}\right)=\left(\begin{array}{cccc}
		R\left(\lambda, Y_{0}\right) & \epsilon_{\lambda} & 0 & 0 \\
		0 & 0 & 0 & 0 \\
		0 & 0 & R\left(\lambda, \mathfrak{A}_{0}\right) & \varphi_{\lambda} \\
		0 & 0 & 0 & 0
	\end{array}\right),
	$$
	where
	\begin{equation}\label{3.5}
		\epsilon_{\lambda}(\tau)=e^{\lambda \tau}, \tau \in[-\theta, 0] \text { and } \varphi_{\lambda}(s)=e^{-\int_{0}^{s} \frac{\lambda+\nu_{*}(y)}{\gamma_{*}(y)} \ud y} , s \in [0,m].
	\end{equation}
	In addition,
	\begin{align*}
		\operatorname{ker}\left(\lambda-Y_{m}\right) &=\left\{f \cdot \epsilon_{\lambda}: f \in \mathfrak{X} \right\}, \\
		\operatorname{ker}\left(\lambda-\mathfrak{A}_{m}\right) &=<\varphi_{\lambda}>.
	\end{align*}
	Let $(z_{1} \hspace{0.3em}  z_{2} \hspace{0.3em} z_{3} \hspace{0.3em} z_{4})^{T} \in \mathcal{X}$ and $\lambda>0$, we have
	\begin{align*}
		\left\|R\left(\lambda, \mathcal{A}_{1}\right)(z_{1} \hspace{0.3em}  z_{2} \hspace{0.3em} z_{3} \hspace{0.3em} z_{4})^{T}\right\| =&\left\|R\left(\lambda, Y_{0}\right) z_{1}+\epsilon_{\lambda} z_{2}\right\|_{E}+\left\|R\left(\lambda, \mathfrak{A}_{0}\right) z_{3}+z_{4} \varphi_{\lambda}\right\|_{\mathfrak{X}} \\
		\leq &\left\|R\left(\lambda, Y_{0}\right) z_{1}\right\|_{E}+\left\|\epsilon_{\lambda} z_{2}\right\|_{E}+\left\|R\left(\lambda, \mathfrak{A}_{0}\right) z_{3}\right\|_{\mathfrak{X}}+\left\|z_{4} \varphi_{\lambda}\right\|_{\mathfrak{X}} \\
		\leq & \int_{-\theta}^{0} \frac{1}{\lambda}\|z_{1}(\tau)\|_{\mathfrak{X}} d \tau+\frac{1}{\lambda}\left\|z_{2}\right\|_{\mathfrak{X}}+\frac{1}{\lambda}\left\|z_{3}\right\|_{\mathfrak{X}}+\frac{1}{\lambda}|z_{4}| \\
		=& \frac{1}{\lambda}\left(\|z_{1}\|_{E}+\left\|z_{2}\right\|_{\mathfrak{X}}+\left\|z_{3}\right\|_{\mathfrak{X}}+|z_{4}|\right).
	\end{align*}
	Therefore, we obtain
	$$
	\left\|\lambda R\left(\lambda, \mathcal{A}_{1}\right)\right\| \leq 1,
	$$
	and $\mathcal{A}_{1}$ is a Hille-Yosida operator.
	Since the perturbing operator $\mathcal{A}_{2}$ is bounded, it follows from Lemma \ref{lemma2.4} that $\mathcal{A}$ is also a Hille-Yosida operator. In particular, the Hille-Yosida  operator $\mathcal{A}$  is the generator of a strongly continuous semigroup on the closure of its domain, by Lemma \ref{theorem2.2}.
\end{proof}

Hence, according to Proposition \ref{prop3.2} we observe that the operator \(\left(\mathcal{A}_{0}, D\left(\mathcal{A}_{0}\right)\right)\) also yields a strongly continuous semigroup on the space \(E \times\{0\} \times \mathfrak{X} \times\{0\}\). The operator \(\left(\mathscr{A}_{1}, D\left(\mathscr{A}_{1}\right)\right)\) generates a \(C_{0}\)-semigroup on \(\mathscr{X}\), as shown by the following theorem.

\begin{theorem}\label{theorem3.4}
	The operator \(\left(\mathscr{A}_{1}, D\left(\mathscr{A}_{1}\right)\right)\) is isomorphic to the part \(\left(\mathcal{A}_{0}, D\left(\mathcal{A}_{0}\right)\right)\) of the operator \((\mathcal{A}, D(\mathcal{A}))\) on the closure of its domain \(\overline{D(\mathcal{A})}\).
\end{theorem}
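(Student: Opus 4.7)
The plan is to exhibit an explicit isometric isomorphism between $\mathscr{X}=E\times\mathfrak{X}$ and the closed subspace $\overline{D(\mathcal{A})}\subseteq\mathcal{X}$ that intertwines the two operators. First I would identify $\overline{D(\mathcal{A})}$. Since $D(Y_m)=W^{1,1}([-\theta,0],\mathfrak{X})$ is dense in $E$ and $D(\mathfrak{A}_m)=W^{1,1}(0,m)$ is dense in $\mathfrak{X}$, and the second and fourth components of $D(\mathcal{A})$ are pinned to $\{0\}$ and $\{0\}$, a direct density argument gives
$$
\overline{D(\mathcal{A})}=E\times\{0\}\times\mathfrak{X}\times\{0\}.
$$
I would then define $J:\mathscr{X}\to\overline{D(\mathcal{A})}$ by $J(y,f):=(y,0,f,0)$, which is clearly a surjective linear isometry when $\mathscr{X}$ and $\overline{D(\mathcal{A})}$ carry the obvious product norms.

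Next I would compute the part $(\mathcal{A}_0,D(\mathcal{A}_0))$ of $\mathcal{A}$ in $\overline{D(\mathcal{A})}$ using Definition \ref{defination2.3}. For $(y,0,f,0)\in D(\mathcal{A})$, the action of $\mathcal{A}$ reads
$$
\mathcal{A}(y,0,f,0)^{T}=(Y_{m}y,\,-Gy+f,\,\mathfrak{A}_{m}f,\,\Phi y-\mathcal{P}f)^{T}.
$$
Requiring this image to lie in $\overline{D(\mathcal{A})}$ forces the second and fourth components to vanish, i.e.\ $Gy=f$ and $\mathcal{P}f=\Phi y$. Therefore
$$
D(\mathcal{A}_{0})=\bigl\{(y,0,f,0):\; y\in D(Y_{m}),\; f\in D(\mathfrak{A}_{m}),\; Gy=f,\; \mathcal{P}f=\Phi y\bigr\},
$$
and on this domain $\mathcal{A}_{0}(y,0,f,0)=(Y_{m}y,0,\mathfrak{A}_{m}f,0)$.

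Comparing with the definition of $(\mathscr{A}_{1},D(\mathscr{A}_{1}))$ from \eqref{a1a2}, I see that the side-conditions $Gy=f$ and $\mathcal{P}f=\Phi y$ defining $D(\mathscr{A}_{1})$ are exactly the same as those defining $D(\mathcal{A}_{0})$. Hence $J$ maps $D(\mathscr{A}_{1})$ bijectively onto $D(\mathcal{A}_{0})$, and a one-line check shows the intertwining relation $\mathcal{A}_{0}J=J\mathscr{A}_{1}$ on $D(\mathscr{A}_{1})$. This establishes that $(\mathscr{A}_{1},D(\mathscr{A}_{1}))$ and $(\mathcal{A}_{0},D(\mathcal{A}_{0}))$ are similar via $J$, i.e.\ isomorphic in the sense required.

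The only nontrivial bookkeeping is the identification of $\overline{D(\mathcal{A})}$ and the correct extraction of the two boundary/coupling conditions from the requirement that $\mathcal{A}(y,0,f,0)$ remain in $\overline{D(\mathcal{A})}$; everything else is a matter of reading off components. Once the theorem is established, Proposition \ref{prop3.2} together with the fact that parts of Hille–Yosida operators generate $C_{0}$-semigroups on the closure of the domain immediately yields that $(\mathscr{A}_{1},D(\mathscr{A}_{1}))$ generates a $C_{0}$-semigroup on $\mathscr{X}$.
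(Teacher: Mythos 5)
Your proposal is correct and follows essentially the same route as the paper: identify $\overline{D(\mathcal{A})}=E\times\{0\}\times\mathfrak{X}\times\{0\}$, compute $D(\mathcal{A}_0)$ from Definition \ref{defination2.3}, observe that the resulting side conditions $Gy=f$, $\mathcal{P}f=\Phi y$ coincide with those defining $D(\mathscr{A}_1)$, and conclude via the obvious embedding $(y,f)\mapsto(y,0,f,0)$. Your version is in fact slightly more explicit than the paper's, since you write out the action of $\mathcal{A}$ on $(y,0,f,0)$ and show directly why the second and fourth components must vanish, whereas the paper simply asserts the domain characterization.
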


\begin{proof}
	From Definition \ref{defination2.3}, we observe that the part $\left(\mathcal{A}_{0}, D\left(\mathcal{A}_{0}\right)\right)$ of $(\mathcal{A}, D(\mathcal{A}))$ on the closure of its domain
	$$
	\mathcal{X}_{0}:=\overline{D(\mathcal{A})}=E \times\{0\} \times \mathfrak{X} \times\{0\}
	$$
	generates a strongly continuous semigroup. Or more precisely, \begin{align*}
		D\left(\mathcal{A}_{0}\right)&=\{x \in D(\mathcal{A}): \mathcal{A} x \in \overline{D(\mathcal{A})}\}\\
		&=\left\{ \hspace{0.3em} \left(\begin{array}{l}y\\ 0 \\s \\ 0\end{array}\right): y \in D\left(Y_{m}\right), s\in D\left(\mathfrak{A}_{m}\right), \hspace{0.3em} \mathcal{A}\left(\begin{array}{l}y \\ 0 \\ s \\ 0\end{array}\right) \in \mathcal{X}_{0} \hspace{0.05em} \right\}\\
		&=\left\{ \hspace{0.2em} \left(\begin{array}{l}y\\ 0 \\ s\\ 0\end{array}\right): y \in D\left(Y_{m}\right), s\in D\left(\mathfrak{A}_{m}\right), \hspace{0.3em} \begin{array}{l}
			Gy=s \\
			\mathcal{P}s=\Phi y
		\end{array} \hspace{0.05em} \right\}.
	\end{align*}
	
	Hence, the operator $(\mathscr{A}_{1}, D(\mathscr{A}_{1}))$ is isomorphic to $\left(\mathcal{A}_{0}, D\left(\mathcal{A}_{0}\right)\right)$ and generates a $C_{0}$-semigroup on the state space $\mathcal{X}$.
\end{proof}
Next we formulate the most important result of this section as folllows.
\begin{theorem}\label{HY}
	The operator \((\mathscr{A}, D(\mathscr{A}))\) of the abstract boundary delay problem \eqref{uu} generates a strongly continuous semigroup \((\mathscr{T}(t))_{t \geqslant 0}\) of boundary linear operators on \(\mathscr{X}\).
\end{theorem}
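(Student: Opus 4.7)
The plan is to reduce Theorem \ref{HY} to the bounded perturbation theorem, using the decomposition $\mathscr{A} = \mathscr{A}_1 + \mathscr{A}_2$ already introduced in \eqref{a1a2}. By Theorem \ref{theorem3.4}, the operator $(\mathscr{A}_1, D(\mathscr{A}_1))$ is isomorphic to the part $(\mathcal{A}_0, D(\mathcal{A}_0))$ of the Hille--Yosida operator $(\mathcal{A}, D(\mathcal{A}))$ on $\overline{D(\mathcal{A})}$, and therefore $\mathscr{A}_1$ already generates a strongly continuous semigroup on $\mathscr{X}$. Hence the only remaining task is to handle the perturbation $\mathscr{A}_2$.

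The first step is to verify that $\mathscr{A}_2$ is a bounded linear operator on the whole of $\mathscr{X} = E \times \mathfrak{X}$. Looking at its matrix form, $\mathscr{A}_2$ acts nontrivially only through the lower-right block $\mathfrak{B}_m$, so it suffices to bound $\mathfrak{B}_m$ on $\mathfrak{X} = L^1(0,m)$. Since $\varepsilon_*(s) = p_*(s)(\mu_P(s,P_*) + \gamma_{sP}(s,P_*)) + p_*'(s)\gamma_P(s,P_*)$ arises from the regularity assumptions on the vital rates and from the stationary profile \eqref{2.5}, the function $\varepsilon_*$ is continuous (in fact bounded) on $[0,m]$. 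For $f \in \mathfrak{X}$ one estimates
\begin{equation*}
\|\mathfrak{B}_m f\|_{\mathfrak{X}} = \int_0^m \bigl|\varepsilon_*(s)\bigr| \, \ud s \cdot \biggl|\int_0^m f(\sigma)\, \ud\sigma\biggr| \le \|\varepsilon_*\|_{L^1(0,m)} \, \|f\|_{\mathfrak{X}},
\end{equation*}
so $\mathfrak{B}_m$ is bounded and hence $\mathscr{A}_2 \in \mathcal{L}(\mathscr{X})$.

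The second and final step is to apply the classical bounded perturbation theorem for generators of $C_0$-semigroups (see e.g. \cite{KJ2000}): if $\mathscr{A}_1$ generates a $C_0$-semigroup on $\mathscr{X}$ and $\mathscr{A}_2 \in \mathcal{L}(\mathscr{X})$, then $(\mathscr{A}_1 + \mathscr{A}_2, D(\mathscr{A}_1))$ also generates a $C_0$-semigroup on $\mathscr{X}$. Since $D(\mathscr{A}) = D(\mathscr{A}_1)$ by construction, this yields a strongly continuous semigroup $(\mathscr{T}(t))_{t\geq 0}$ generated by $(\mathscr{A}, D(\mathscr{A}))$. The fact that these are \emph{boundary} linear operators is encoded in the coupling conditions $Gy = f$ and $\mathcal{P}f = \Phi y$ built into $D(\mathscr{A})$, which are preserved because $\mathscr{A}_2$ does not modify the domain.

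The only genuinely delicate point is the reduction to the bounded perturbation theorem rather than the verification itself; once the decomposition $\mathscr{A} = \mathscr{A}_1 + \mathscr{A}_2$ is accepted and Theorem \ref{theorem3.4} is in hand, the argument is essentially a one-line citation combined with the boundedness estimate above. I would therefore expect no serious obstacle, but I would take care to state explicitly that the perturbation theorem preserves the domain, so that the boundary coupling $Gy=f,\ \mathcal{P}f=\Phi y$ remains in force for the generator of $(\mathscr{T}(t))_{t \geq 0}$.
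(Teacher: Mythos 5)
Your proof is correct and follows essentially the same route as the paper: decompose $\mathscr{A}=\mathscr{A}_1+\mathscr{A}_2$, invoke Theorem~\ref{theorem3.4} to get that $\mathscr{A}_1$ generates a $C_0$-semigroup on $\mathscr{X}$, and then perturb by the bounded operator $\mathscr{A}_2$. The only (cosmetic) difference is in the tool cited: the paper appeals to the Desch--Schappacher perturbation theorem (Corollary~3.4 in \cite{KJ2000}), whereas you use the basic bounded perturbation theorem, which is perfectly adequate here because $\mathscr{A}_2$ acts boundedly on all of $\mathscr{X}$ via the block $\mathfrak{B}_m$; your explicit $L^1$ estimate $\|\mathfrak{B}_m f\|_{\mathfrak{X}}\le\|\varepsilon_*\|_{L^1(0,m)}\|f\|_{\mathfrak{X}}$ makes that transparent.
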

\begin{proof}
	Since isomorphisms have similar properties, we can obtain that the matrix operator \(\mathscr{A}_{1}\) is as well as  a Hille-Yosida operator. 
	In addition to this, both \((\mathscr{A}_{2}, D(\mathscr{A}_{2}))\) and \(\mathfrak{B}_{m}\) are  bounded
	perturbations of \(\mathscr{A}\) on \(\mathfrak{X}\), thus by \(\mathscr{A}=\mathscr{A}_{1}+\mathscr{A}_{2}\) and using the Desch-Schappacher perturbation theorem (Corollary 3.4 in \cite{KJ2000}),  we conclude that  \(\mathscr{A}\) generates a strongly continuous semigroup.
\end{proof}
The following well-posedness result for \eqref{uu} is implied by Theorem \ref{HY} (see Theorem 2.1 in Ref. \cite{SP2001}).

\begin{proposition}
	Assume that the initial value of the linear boundary delay problem \eqref{cauchy} is \(u^{0} \in E\), then it has a unique solution \(u(s, t)\) in the space \(C\left([-\theta,+\infty), \mathfrak{X}\right)\), given by \(u(s, t)=u^{0}(s, t)\) for \(t \in[-\theta, 0]\) and
	$$
	u(s, t)=\Pi_{2}\left(\mathscr{T}(t)\left(\begin{array}{c}
		u^{0}(s, 0) \\
		u^{0}(0)
	\end{array}\right)\right), \text { for } t>0,
	$$
	where \(\Pi_{2}\) is the projection operator of $\mathscr{T}(t)$ on the space $\mathfrak{X}$.
\end{proposition}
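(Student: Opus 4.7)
The plan is to deduce this well-posedness statement as a direct consequence of Theorem \ref{HY}, combined with the standard dictionary between abstract boundary delay problems and abstract Cauchy problems on the product space $\mathscr{X}=E\times\mathfrak{X}$.

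First, given an initial history $u^{0}\in E$, I would form the initial datum for the abstract Cauchy problem \eqref{uu} by setting
$$\mathscr{U}_{0}:=\begin{pmatrix} u^{0} \\ u^{0}(\cdot,0)\end{pmatrix}\in \mathscr{X},$$
where $u^{0}(\cdot,0)\in\mathfrak{X}$ is the value of the history at $\tau=0$ (well defined on the dense subset of regular initial data, and extended in the mild sense by continuity/density). By Theorem \ref{HY}, the operator $\mathscr{A}$ generates the $C_{0}$-semigroup $(\mathscr{T}(t))_{t\geq 0}$ on $\mathscr{X}$, so the unique mild solution of \eqref{uu} is $\mathscr{U}(t)=\mathscr{T}(t)\mathscr{U}_{0}$. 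Strong continuity of the semigroup implies $t\mapsto \mathscr{U}(t)\in C([0,+\infty),\mathscr{X})$, and in particular $u(\cdot,t):=\Pi_{2}\mathscr{U}(t)\in C([0,+\infty),\mathfrak{X})$. Gluing this with the prescribed history $u(\cdot,t):=u^{0}(\cdot,t)$ for $t\in[-\theta,0]$ yields a candidate in $C([-\theta,+\infty),\mathfrak{X})$, with continuity at $t=0$ guaranteed by the matching built into the definition of $\mathscr{U}_{0}$.

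Next I would verify that this candidate actually solves the boundary delay problem \eqref{cauchy}. The key identification is that, for $\mathscr{U}(t)=(\Pi_{1}\mathscr{U}(t),\Pi_{2}\mathscr{U}(t))^{T}\in D(\mathscr{A})$, the shift structure of $Y_{m}$ together with the coupling constraints $Gy=f$ and $\mathcal{P}f=\Phi y$ built into $D(\mathscr{A})$ force the first component to coincide with the history segment of the second component, i.e.\ $[\Pi_{1}\mathscr{U}(t)](\tau)=u(\cdot,t+\tau)$ for $\tau\in[-\theta,0]$. Once this is established, the second-component equation in \eqref{uu} gives $\tfrac{\ud}{\ud t}u(t)=(\mathfrak{A}_{m}+\mathfrak{B}_{m})u(t)$, while the coupling $\mathcal{P}f=\Phi y$ delivers the nonlocal boundary condition $\mathcal{P}u(t)=\Phi(u_{t})$. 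Uniqueness then transfers automatically from the uniqueness of mild solutions of the abstract Cauchy problem.

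The main obstacle is precisely this identification of the first component of $\mathscr{T}(t)\mathscr{U}_{0}$ with the history segment $u_{t}$ of the second component. It is not purely algebraic: it depends on the interplay between the nilpotent left-shift semigroup generated by $Y_{0}$ and the trace coupling $Gy=f$, and is handled by the general framework of boundary perturbation for delay equations. For this point I would invoke Theorem 2.1 of \cite{SP2001}, which provides exactly this correspondence and from which the stated representation formula and well-posedness follow at once.
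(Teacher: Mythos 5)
Your proposal is correct and follows essentially the same route as the paper: the paper states the proposition as an immediate consequence of Theorem \ref{HY} together with Theorem 2.1 of \cite{SP2001}, which is precisely the reduction you carry out (form $\mathscr{U}_{0}$, apply the $C_{0}$-semigroup, and invoke the cited result for the identification of the first component with the history segment). Your extra discussion of the coupling constraints in $D(\mathscr{A})$ and the role of the nilpotent shift just unpacks what the citation supplies and does not change the argument.
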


\section{Regularity properties of the $C_{0}$-semigroup}
%这一节我们主要将半群理论和正则性相结合，更准确的说，我们将证明A=B。以至于这个偏微分方程的稳态解p的稳定性就由主特征值u0的位置所决定。然后我们才有可能进一步获得系统的特征方程以确定其特征值。

In this section, we study regularity properties of the governing linear semigroup and use results from the spectral theory of $C_0$ semigroups to prove that $s\left(\mathscr{A}\right) \in \sigma\left(\mathscr{A}\right)=\sigma_{p}\left(\mathscr{A}\right) $. Then the stability of the positive stationary solution  of model \eqref{1.1} is determined by the position of the leading  eigenvalue. We will then demonstrate that it is possible to obtain an explicit characteristic equation corresponding to the linearised system to determine the position of the  leading eigenvalue. 

We first establish the main result of this section.

\begin{theorem}\label{theorem4.1}
	The spectrum of $\mathscr{A}$ can contain only isolated eigenvalues of finite multiplicity.
\end{theorem}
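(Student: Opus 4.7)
The plan is to prove Theorem \ref{theorem4.1} by establishing that the $C_{0}$-semigroup $(\mathscr{T}(t))_{t \geq 0}$ is eventually compact on $\mathscr{X}$. Once this is in hand, applying Riesz--Schauder (Lemma \ref{lemma2.8}) to the compact operator $\mathscr{T}(t_{0})$, and transferring the conclusion to the generator via the standard inclusion $e^{t_{0}\sigma(\mathscr{A})} \subseteq \sigma(\mathscr{T}(t_{0}))$ valid for eventually compact semigroups, will immediately yield the claim: every point of $\sigma(\mathscr{A})$ is an isolated eigenvalue of finite multiplicity.

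The key observation is that both uncoupled base semigroups are nilpotent. The left shift $(\mathfrak{T}_{0}(t))$ on $E$ vanishes for $t > \theta$ by construction, and the transport semigroup $(\Lambda_{0}(t))$ on $\mathfrak{X}$ vanishes for $t > \Gamma(m)$ since the characteristics exit $[0,m]$ in the finite time $\Gamma(m)$, with $\Gamma$ defined in \eqref{gamma}. Thus the diagonal semigroup associated to $\mathcal{A}_{0}$ (the part of $\mathcal{A}$ on $\overline{D(\mathcal{A})}$ analysed in Theorem \ref{theorem3.4}, before re-imposing the boundary coupling) is identically zero for $t > T_{0} := \max\{\theta, \Gamma(m)\}$.

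For $t > T_{0}$, the state $\mathscr{T}(t)\mathscr{U}_{0}$ is therefore determined entirely by the boundary feedback loop $Gy = f$, $\mathcal{P}f = \Phi y$ accumulated over a finite backward time window. Concretely, each solution component can be reconstructed from the scalar birth-rate trajectory $\sigma \mapsto u(0,\sigma) = \Phi u_{\sigma}$ for $\sigma \in [t - T_{0}, t]$: the $f$-component is recovered by integration along characteristics of the transport equation (using the explicit kernel appearing in $\Lambda_{0}$), and the $y$-component is then just the restriction of the PDE solution to the last $\theta$ time units. Since this scalar history lies in $L^{1}$ of a bounded interval, and the reconstruction map into $\mathscr{X}$ is a bounded integral operator with smooth kernel along characteristics, it is compact by a standard Kolmogorov argument together with the Rellich--Kondrachov embedding $W^{1,1}(0,m) \hookrightarrow L^{1}(0,m)$ used implicitly in the smoothing provided by $R(\lambda, \mathfrak{A}_{0})$ (formula \eqref{fg}). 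Finally, the bounded (indeed finite rank) perturbation $\mathscr{A}_{2}$ is incorporated via the Dyson--Phillips series without destroying eventual compactness, yielding compactness of $\mathscr{T}(t)$ for all $t > T_{0}$.

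The main obstacle is the bookkeeping in the preceding step: tracking how the Desch--Schappacher boundary perturbation invoked in the proof of Theorem \ref{HY} enters the representation of $\mathscr{T}(t)$, and verifying that the resulting operator factors through the scalar functional $\Phi$ once the nilpotent transients of $\mathfrak{T}_{0}$ and $\Lambda_{0}$ have died off. This scalar bottleneck is what enforces compactness, since the shift resolvent $R(\lambda, Y_{0})$ on the infinite-dimensional-fibred space $E = L^{1}([-\theta,0]; \mathfrak{X})$ is not itself compact; compactness of the full semigroup must be extracted entirely from the one-dimensional coupling at the boundary, which dominates the dynamics precisely for $t > T_{0}$.
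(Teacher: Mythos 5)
Your route---eventual compactness of the semigroup rather than compactness of the resolvent---differs from the paper's, which plugs the explicit resolvent formula into the claim that $(\lambda I-\mathscr{A}_1)^{-1}$ maps $L^1(0,m)$ boundedly into $W^{1,1}(0,m)$ and then invokes the compact Sobolev embedding, treating $\mathscr{A}_2$ as a compact perturbation. Both strategies, if carried through, deliver the statement via Riesz--Schauder theory, and your observation that $\mathfrak{T}_0$ and $\Lambda_0$ are nilpotent so that for large $t$ the state factors through the scalar birth-rate history is the right starting point.

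There is, however, a genuine gap at the step where you conclude compactness. The reconstruction map from the scalar history $\psi(\sigma)=u(0,\sigma)$, $\sigma\in[t-T_0,t]$, to the state $(u_t,u(t))$ is \emph{not} a smoothing integral operator: along characteristics one has $u(s,t)=\Pi_*(s)\,\psi\bigl(t-\Gamma(s)\bigr)$, i.e.\ a weighted composition operator (an isomorphism onto its range, up to the Jacobian of $\Gamma$), which is bounded but nowhere compact. The smoothing you invoke through formula \eqref{fg} together with $W^{1,1}(0,m)\hookrightarrow L^1(0,m)$ concerns the action of $R(\lambda,\mathfrak{A}_0)$ on a \emph{distributed} source term $z$, where the integral $\int_0^s$ genuinely gains a derivative; boundary data enters multiplicatively along the characteristic emanating from $s=0$ and gains no regularity at all. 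Consequently the ``scalar bottleneck'' you appeal to is still infinite-dimensional ($L^1$ of a bounded interval) and by itself imposes no compactness. The missing ingredient is that the map from the \emph{initial datum} to the birth-rate history is compact: after the nilpotent transients die out, $B(t)=u(0,t)$ solves a renewal (Volterra) equation whose convolution kernel is continuous by the regularity of $\beta$ in $(s,\tau)$, so the orbit map $\mathscr{U}_0\mapsto B|_{[t-T_0,t]}$ lands in $C$ of a bounded interval and is compact into $L^1$. It is this regularizing effect of the convolution, not the reconstruction, that makes $\mathscr{T}(t)$ eventually compact once the two maps are composed; without making that precise the argument does not close.
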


\begin{proof}
	Since the operator \(\mathscr{A}_{2}\) is clearly compact on \(\mathfrak{X}\), it suffices to verify the claim for the operator \(\mathscr{A}_{1}\). To this end, given $z \in \mathfrak{X}$, we find a unique solution $u \in D(\mathscr{A}_{1})$ of the equation
	$$
	\lambda u-\mathscr{A}_{1} u=z
	$$
	in the form
	\begin{equation}
		u(s)=e^{-\int_{0}^{s} \frac{\lambda+\nu_{*}(y)}{\gamma_{*}(y)} \ud y} \int_{0}^{s} e^{\int_{s}^{\alpha} \frac{\lambda+\nu_{*}(a)}{\gamma_{*}(a)} \ud a}  \frac{z(\alpha)}{\gamma_{*}(\alpha)} \ud \alpha. 
	\end{equation}
	Consequently, for $\lambda>0$ large enough, the resolvent operator $(\lambda I-\mathscr{A}_{1})^{-1}$ exists and is bounded, mapping $\mathfrak{X}=L^{1}(0, m)$ into $W^{1,1}(0, m)$. It then follows from Sobolev embedding theorems, that $W^{1,1}(0, m)$ is compactly embedded in  $\mathfrak{X}$ , that is, any bounded set $M$ on $W^{1,1}(0, m)$ is a compact set on  $\mathfrak{X}$. It also follows from the boundedness of $(\lambda I-\mathscr{A}_{1})^{-1}$, that $(\lambda I-\mathscr{A}_{1})^{-1} M $ is a bounded set in $W^{1,1}(0, m)$. Using the definition of a compact operator, it's not hard to see that $(\lambda I-\mathscr{A}_{1})$ is a compact operator on $W^{1,1}(0, m)$. The conclusion of the theorem is then obtained by using Riesz-Schauder theory, e.g. Lemma \ref{lemma2.8}.
\end{proof}

\begin{proposition}
	The linear stability of the stationary solution of model \eqref{1.1} is determined by spectrum of the generator, i.e.,
	$$
	\sigma(\mathscr{T}(t))=\{0\} \cup e^{\sigma(\mathscr{A})}, \quad t>0.
	$$
	Furthermore, the spectral bound $$ s(\mathscr{A})=\sup \{\mathrm{Re} \lambda \mid \lambda \in \sigma (\mathscr{A})$$ 
	coincides with the growth rate (see e.g. \cite{KJ2000,AP1983})  $$\omega_{0}=\displaystyle\lim _{t \rightarrow \infty} t^{-1} \ln \|\mathscr{T}(t)\|.$$
\end{proposition}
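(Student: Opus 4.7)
The plan is to deduce the spectral identity from a spectral mapping theorem and to identify $s(\mathscr{A})$ with $\omega_0$ as an immediate consequence. The starting point is Theorem~\ref{theorem4.1}, which shows that $\sigma(\mathscr{A})$ is a (countable) set of isolated eigenvalues of finite algebraic multiplicity, and in particular $\sigma(\mathscr{A}) = \sigma_p(\mathscr{A})$. Combined with the compactness of $R(\lambda,\mathscr{A}_1)$ established in its proof via the compact Sobolev embedding $W^{1,1}(0,m)\hookrightarrow\mathfrak{X}$, we already have a compact resolvent on the PDE component, and the delay component is handled by the nilpotent left-shift $\mathfrak{T}_0$ on $E=L^1([-\theta,0],\mathfrak X)$.

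First I would argue that the semigroup $(\mathscr{T}(t))_{t\geq 0}$ is eventually compact. For $t>\theta$, the history segment $u_t$ no longer depends on the initial datum $u^{0}$ on $[-\theta,0]$ (because the shift semigroup $\mathfrak T_0$ is nilpotent after time $\theta$), so the history component of $\mathscr{T}(t)$ factors through the solution of the PDE evolution on $\mathfrak X$. The latter enjoys the $W^{1,1}$-regularising property encoded in the proof of Theorem~\ref{theorem4.1}, and therefore maps bounded sets in $\mathscr{X}$ into relatively compact sets for $t$ large enough. Since the perturbations $\mathscr{A}_2$ and $\mathfrak{B}_m$ are bounded and preserve eventual compactness under Desch--Schappacher perturbation, the conclusion carries over from $\mathscr{A}_1$ to $\mathscr{A}$.

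Once eventual compactness of $(\mathscr{T}(t))_{t\geq 0}$ is established, the standard spectral mapping theorem for eventually compact $C_0$-semigroups (see, e.g., Corollary IV.3.12 in \cite{KJ2000}) yields
$$
\sigma(\mathscr{T}(t))\setminus\{0\} \;=\; e^{t\,\sigma(\mathscr{A})}, \qquad t>0,
$$
with equality of point spectra (up to the zero eigenvalue) and preservation of multiplicities. Moreover, because the underlying state space $\mathscr{X}$ is infinite-dimensional and $\mathscr{T}(t)$ is compact for $t$ sufficiently large, Riesz--Schauder theory (Lemma~\ref{lemma2.8}) forces $0\in\sigma(\mathscr{T}(t))$. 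Combining these two facts gives the claimed identity
$$
\sigma(\mathscr{T}(t)) \;=\; \{0\}\cup e^{\sigma(\mathscr{A})}, \qquad t>0.
$$
The equality $s(\mathscr{A})=\omega_0$ is then a direct corollary: taking $\sup\mathrm{Re}$ on both sides of the spectral mapping identity and using $\omega_0=t^{-1}\log r(\mathscr{T}(t))$ (spectral radius) for any $t>0$ gives $\omega_0=s(\mathscr{A})$.

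The main obstacle is the verification of eventual compactness of $(\mathscr{T}(t))_{t\geq 0}$ on the product space $\mathscr{X}=E\times\mathfrak X$. The delay part is relatively easy thanks to nilpotency of $\mathfrak T_0$, but one has to check carefully that the coupling through the boundary operators $G$ and $\mathcal P$ together with the delay functional $\Phi$ does not destroy the regularising effect coming from the characteristic-flow representation of $\Lambda_0(t)$ and the compact embedding $W^{1,1}(0,m)\hookrightarrow\mathfrak X$. If eventual compactness turned out to be delicate under the stated regularity hypotheses, a natural fallback would be to establish eventual norm continuity of $(\mathscr{T}(t))_{t\geq 0}$, which also suffices for the spectral mapping theorem and would still lead to the same identification $s(\mathscr{A})=\omega_0$.
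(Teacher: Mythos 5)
The paper offers no proof of this proposition at all---it is stated as a direct appeal to the general semigroup literature (\cite{KJ2000,AP1983}). Your proposal therefore cannot be compared to a ``paper proof''; instead it supplies the argument that the paper leaves implicit. As such it is essentially correct in its strategy: show that the semigroup satisfies the hypotheses of a spectral mapping theorem (eventual compactness or, failing that, eventual norm continuity), invoke Theorem~\ref{theorem4.1} to ensure $\sigma(\mathscr{A})=\sigma_p(\mathscr{A})$ consists of isolated eigenvalues, note that the nilpotency of the transport and shift semigroups forces $0\in\sigma(\mathscr{T}(t))$, and read off $s(\mathscr{A})=\omega_0$ from the spectral radius formula. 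This is the standard route in the size-structured/delay-boundary literature (and indeed is how the cited work of Piazzera proceeds, where eventual norm continuity rather than eventual compactness is established).

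Two caveats are worth registering. First, your eventual-compactness sketch leans on the compactness of $R(\lambda,\mathscr{A}_1)$ asserted in the proof of Theorem~\ref{theorem4.1}, but that argument addresses only the $\mathfrak X$-block; on the full product space $\mathscr{X}=E\times\mathfrak X$ the upper-left block of $R(\lambda,\mathscr{A}_1)$ involves $R(\lambda,Y_0)$ on $E=L^1([-\theta,0],\mathfrak X)$, whose Sobolev embedding into $E$ is \emph{not} compact when $\mathfrak X$ is infinite-dimensional. So compactness of the resolvent on $\mathscr X$ cannot be taken as settled. The compactness (or norm-continuity) argument really has to be made at the semigroup level: after $t>\theta$ the nilpotent shift $\mathfrak T_0$ has annihilated the initial history, after $t>\Gamma(m)$ the transport semigroup $\Lambda_0$ has flushed the initial density, and then the state is a function of the scalar boundary history produced by the rank-one functional $\Phi$ (plus the rank-one interior feedback $\mathfrak B_m$); one must show that this boundary-history-to-state map inherits the right regularity. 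Your recognition that this is the delicate point, and that eventual norm continuity via Desch--Schappacher perturbation of a nilpotent semigroup is the safer fallback, is exactly right. Second, a minor but real discrepancy: the spectral mapping theorem gives $\sigma(\mathscr{T}(t))\setminus\{0\}=e^{t\sigma(\mathscr{A})}$, so the statement as printed in the paper (and reproduced in your conclusion) is missing the factor $t$ in the exponent; it should read $\{0\}\cup e^{t\sigma(\mathscr{A})}$. This does not affect the identification $s(\mathscr{A})=\omega_0$, but you should not silently reproduce the typo when you have the correct form one line earlier.
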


If the eigenvalue with the largest real part was real, our analysis would be greatly simplified. In some cases, the following finding allows us to reach this conclusion. To this end, we would mention various existing lemmas and theorems in order to establish the  second major result in this section.

\begin{lemma}\label{le4.3}
	For \(\lambda \in \rho\left(Y_{0}\right) \cap \rho\left(\mathfrak{A}_{0}\right)\), we define  the abstract Dirichlet
	operators (see e.g. \cite{RN1990}) separately
	\begin{equation}
		\begin{aligned}
			&K_{\lambda}: \mathfrak{X}\rightarrow E  \text{ by } K_{\lambda}:=1 \circ \epsilon_{\lambda},  \\
			&L_{\lambda}: E \rightarrow \mathfrak{X} \text{ by } L_{\lambda}:=\left(1 \circ \varphi_{\lambda}\right) \Phi, 
		\end{aligned}
	\end{equation}
	where \(\epsilon_{\lambda}\) and \(\varphi_{\lambda}\) are given in \eqref{3.5}. 
	Then  \(K_{\lambda} \in \mathscr{L}(\mathfrak{X}, E)\) and \(L_{\lambda} \in \mathscr{L}(E, \mathfrak{X}) .\) Apart from that,
	\begin{equation}
		\begin{aligned}
			& G\left(K_{\lambda}(f)\right)=f, \text { for all } 
			f  \in D\left(\mathfrak{A}_{m}\right),\\
			& \mathcal{P}\left(L_{\lambda}\left(y\right)\right)=\Phi(y),  \text { for all } y \in D\left(Y_{m}\right).
			\\ 
		\end{aligned}
	\end{equation}  
\end{lemma}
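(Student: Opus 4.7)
The argument reduces to a direct verification once the compact notation is unpacked. By $K_\lambda = 1 \circ \epsilon_\lambda$ I read the operator that sends $f \in \mathfrak{X}$ to the $\mathfrak{X}$-valued history $\tau \mapsto e^{\lambda\tau} f$ on $[-\theta,0]$; by $L_\lambda = (1 \circ \varphi_\lambda)\Phi$ I read the operator that sends $y \in E$ to the size profile $\Phi(y)\,\varphi_\lambda \in \mathfrak{X}$, noting that $\Phi(y) \in \mathbb{C}$ is a scalar multiplying the explicit function $\varphi_\lambda$ given in \eqref{3.5}. With this reading both boundedness statements and both identities follow from short computations.

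For the boundedness of $K_\lambda$ I would simply estimate
\[
\|K_\lambda f\|_E = \int_{-\theta}^{0} \|e^{\lambda\tau} f\|_\mathfrak{X}\,\ud\tau = \|f\|_\mathfrak{X}\int_{-\theta}^{0} e^{\operatorname{Re}(\lambda)\tau}\,\ud\tau,
\]
which is finite and $\lambda$-dependent but independent of $f$. For $L_\lambda$ I would first check that $\Phi$ is a bounded linear functional on $E$: the regularity assumptions on $\beta$, $\beta_Q$, $w$ and on the steady state $p_*$ (each continuous on a compact set) furnish uniform $L^\infty$ bounds, so estimating each of the two summands in the definition of $\Phi(y)$ by pulling out these $L^\infty$ norms and the constant $\alpha$ yields $|\Phi(y)| \leq C\|y\|_E$ for some constant $C$. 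Combining with $\|L_\lambda y\|_\mathfrak{X} = |\Phi(y)|\,\|\varphi_\lambda\|_\mathfrak{X}$ and noting $\|\varphi_\lambda\|_\mathfrak{X} < \infty$ (since $\varphi_\lambda$ is continuous on $[0,m]$) completes the boundedness claim.

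The two Dirichlet identities are then immediate from $\epsilon_\lambda(0) = 1$ and $\varphi_\lambda(0) = 1$: for $f \in D(\mathfrak{A}_m)$ the element $K_\lambda f$ belongs to $D(Y_m) = W^{1,1}([-\theta,0],\mathfrak{X})$ because $\tau \mapsto e^{\lambda\tau} f$ is absolutely continuous in the $\mathfrak{X}$-norm, and $G(K_\lambda f) = (K_\lambda f)(0) = e^{0} f = f$; symmetrically, for $y \in D(Y_m)$ the profile $L_\lambda y = \Phi(y)\varphi_\lambda$ lies in $W^{1,1}(0,m) = D(\mathfrak{A}_m)$ thanks to the explicit exponential form of $\varphi_\lambda$, and $\mathcal{P}(L_\lambda y) = \varphi_\lambda(0)\Phi(y) = \Phi(y)$. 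There is no serious obstacle in this lemma; the only piece of bookkeeping worth flagging is to confirm that the outputs of $K_\lambda$ and $L_\lambda$ actually land in the domains $D(Y_m)$ and $D(\mathfrak{A}_m)$ respectively, so that the trace maps $G$ and $\mathcal{P}$ may legitimately be applied. The closed-form expressions for $\epsilon_\lambda$ and $\varphi_\lambda$ make both regularity checks transparent.
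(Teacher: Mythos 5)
Your argument is correct: unpacking the notation, checking boundedness of $K_\lambda$ directly and of $L_\lambda$ via the $L^\infty$ bounds on $\beta$, $\beta_Q$, $w$, $p_*$ inherited from the regularity assumptions, and reading off the Dirichlet identities from $\epsilon_\lambda(0)=\varphi_\lambda(0)=1$ together with the observation that the ranges land in $D(Y_m)$ and $D(\mathfrak{A}_m)$ is exactly the verification the lemma requires. The paper itself omits a proof and defers to the cited reference on abstract Dirichlet operators, so your direct computation simply supplies the expected details.
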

Next we will study the position of eigenvalues related to the compactness of operators, in particular we have:
\begin{lemma}\label{th4.5} 
	Let \(\lambda \in \rho\left(Y_{0}\right) \cap \rho\left(\mathfrak{A}_{0}\right)\), and consider  the following properties\\
	(i) \(\lambda \in \rho\left(\mathscr{A}_{1}\right)\);\\
	(ii) \(1 \in \rho\left(K_{\lambda} L_{\lambda}\right)\) for the operator \(K_{\lambda} L_{\lambda} \in \mathscr{L}(E)\);\\
	(iii) \(1 \in \rho\left(L_{\lambda} K_{\lambda}\right)\) for the operator \(L_{\lambda} K_{\lambda} \in \mathscr{L}(\mathfrak{X})\).\\
	Then one has the implications \((i) \Leftarrow(ii) \Leftrightarrow(iii)\).  In particular, if \(K_{\lambda}\) and \(L_{\lambda}\) are compact operators,  the assertions \((i),(ii)\) and \((iii)\) are equivalent.
\end{lemma}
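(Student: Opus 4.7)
The plan is to translate the resolvent problem for $\mathscr{A}_{1}$ into an equivalent scalar fixed-point problem on either $E$ or $\mathfrak{X}$ that involves only the Dirichlet operators $K_{\lambda}$ and $L_{\lambda}$, and then to exploit the classical spectral equality between $AB$ and $BA$. Throughout I fix $\lambda \in \rho(Y_{0}) \cap \rho(\mathfrak{A}_{0})$ and study the equation $(\lambda - \mathscr{A}_{1})(y,f)^{T} = (g,h)^{T}$ together with the coupling $Gy = f$, $\mathcal{P}f = \Phi y$.

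First I would write down explicit representations of the solutions of the two decoupled maximal resolvent equations. Because $\ker(\lambda - Y_{m}) = K_{\lambda}(\mathfrak{X})$ and $\ker(\lambda - \mathfrak{A}_{m}) = \langle \varphi_{\lambda} \rangle$, every solution of $(\lambda - Y_{m})y = g$ admitting $Gy=f$ has the form $y = R(\lambda, Y_{0})g + K_{\lambda}f$, using $GK_{\lambda} = \mathrm{Id}$ from Lemma \ref{le4.3}. Likewise, every solution of $(\lambda - \mathfrak{A}_{m})f = h$ admitting $\mathcal{P}f = \Phi y$ can be written as $f = R(\lambda, \mathfrak{A}_{0})h + \varphi_{\lambda}\Phi(y) = R(\lambda, \mathfrak{A}_{0})h + L_{\lambda}y$. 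Substituting each expression into the other yields the two fixed-point identities
$$(I - L_{\lambda}K_{\lambda})f = R(\lambda, \mathfrak{A}_{0})h + L_{\lambda}R(\lambda, Y_{0})g, \qquad (I - K_{\lambda}L_{\lambda})y = R(\lambda, Y_{0})g + K_{\lambda}R(\lambda, \mathfrak{A}_{0})h.$$
From either identity one recovers a unique $(y,f) \in D(\mathscr{A}_{1})$ for arbitrary data whenever $1$ lies in the corresponding resolvent set, and the inverse depends boundedly on the data by the boundedness of $K_{\lambda}$, $L_{\lambda}$, $R(\lambda, Y_{0})$ and $R(\lambda, \mathfrak{A}_{0})$. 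This gives the implications $(ii)\Rightarrow(i)$ and $(iii)\Rightarrow(i)$.

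For the equivalence $(ii) \Leftrightarrow (iii)$ I would invoke the classical identity $\sigma(AB) \setminus \{0\} = \sigma(BA) \setminus \{0\}$, valid for bounded operators $A \in \mathscr{L}(X,Y)$ and $B \in \mathscr{L}(Y,X)$ between arbitrary Banach spaces; since $1 \neq 0$, it lies in one resolvent set exactly when it lies in the other. For the final assertion, when $K_{\lambda}$ and $L_{\lambda}$ are compact so are $K_{\lambda}L_{\lambda}$ and $L_{\lambda}K_{\lambda}$, and the Riesz--Schauder theory (Lemma \ref{lemma2.8}) reduces the invertibility of $I - L_{\lambda}K_{\lambda}$ to the triviality of its kernel. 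To close the circle I must promote $(i)$ to $(iii)$: if on the contrary $1 \in \sigma(L_{\lambda}K_{\lambda})$ there is a nontrivial $f_{0} \in \mathfrak{X}$ with $L_{\lambda}K_{\lambda}f_{0} = f_{0}$; setting $y_{0} := K_{\lambda}f_{0}$ and using the two Dirichlet identities of Lemma \ref{le4.3} together with $(\lambda - Y_{m})K_{\lambda} = 0$ and $(\lambda - \mathfrak{A}_{m})\varphi_{\lambda} = 0$, one checks that $(y_{0}, f_{0}) \in D(\mathscr{A}_{1})$ is a nonzero element of $\ker(\lambda - \mathscr{A}_{1})$, contradicting $\lambda \in \rho(\mathscr{A}_{1})$.

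The step I expect to require the most care is the bookkeeping in the decomposition: one must verify that the two ansatz representations are the only ones compatible with the coupled boundary conditions $Gy = f$ and $\mathcal{P}f = \Phi y$, and that the asymmetry between the infinite-dimensional lift $K_{\lambda}:\mathfrak{X}\to E$ and the one-dimensional lift $\varphi_{\lambda}\in \mathfrak{X}$ does not spoil the symmetry of the two fixed-point equations. Once this is settled the remaining arguments are a direct application of the spectral identity for $AB$ versus $BA$ and the Fredholm alternative.
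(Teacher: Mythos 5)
Your proof is correct. The paper itself does not prove this lemma but simply cites Theorem 2.7 of Nagel's 1990 paper (\cite{RN1990}); your argument is essentially the standard one that appears there, and it is the same mechanism the paper uses a few lines later in the proof of Theorem \ref{theo4.6}, where $(\lambda-\mathscr{A}_{1})$ is factored through the matrix $\mathcal{B}_{\lambda}=\left(\begin{smallmatrix}\mathrm{Id} & -K_{\lambda}\\ -L_{\lambda}& \mathrm{Id}\end{smallmatrix}\right)$. Your two ansatz decompositions $y=R(\lambda,Y_{0})g+K_{\lambda}f$ and $f=R(\lambda,\mathfrak{A}_{0})h+L_{\lambda}y$ are precisely the statement that $\mathcal{B}_{\lambda}(y,f)^{T}$ lies in $D(Y_{0})\times D(\mathfrak{A}_{0})$, and the two Schur-type fixed-point equations you derive are exactly what makes $\mathcal{B}_{\lambda}$ invertible. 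Your appeal to $\sigma(AB)\setminus\{0\}=\sigma(BA)\setminus\{0\}$ for $(ii)\Leftrightarrow(iii)$, and the explicit construction of the eigenvector $(y_{0},f_{0})=(K_{\lambda}f_{0},f_{0})$ from a fixed point of $L_{\lambda}K_{\lambda}$ in the compact case, both check out against the operator identities $GK_{\lambda}=\mathrm{Id}$, $\mathcal{P}L_{\lambda}=\Phi$, and the kernel descriptions from the proof of Proposition \ref{prop3.2}. The only small redundancy is that compactness of \emph{both} $K_{\lambda}$ and $L_{\lambda}$ is more than you need (compactness of one suffices for $K_{\lambda}L_{\lambda}$ and $L_{\lambda}K_{\lambda}$ to be compact), but you are matching the hypothesis as stated, and in this model $L_{\lambda}$ has one-dimensional range so the question is moot.
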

This lemma is taken from \cite{RN1990}, specifically see Theorem 2.7 in \cite{RN1990}.
Here the operator \(L_{\lambda}\) is compact, which has one-dimensional range. Therefore \(K_{\lambda} L_{\lambda}\) and \(L_{\lambda} K_{\lambda}\) are compact too.  From Lemma \ref{th4.5}  we have the following result.
\begin{theorem}\label{theo4.6}
	For the operator $(\mathscr{A}_{1}, D(\mathscr{A}_{1}))$, there holds that\\
	(i) \(\lambda \in \sigma\left(\mathscr{A}_{1}\right) \Leftrightarrow 1 \in \sigma\left(L_{\lambda} K_{\lambda}\right) \Leftrightarrow 1 \in \sigma_{p}\left(L_{\lambda} K_{\lambda}\right) \Leftrightarrow \lambda \in \sigma_{p}\left(\mathscr{A}_{1}\right)\);\\
	(ii) Moreover, if \(\lambda \in \rho\left(\mathscr{A}_{1}\right)\) equivalently \(1 \in \rho\left(L_{\lambda} K_{\lambda}\right)\), then the resolvent of $\mathscr{A}_{1}$ is given by
	\begin{equation}\label{th4.6 4.3}
		R\left(\lambda, \mathscr{A}_{1}\right)=\left(\begin{array}{cc}
			\left(1-K_{\lambda} L_{\lambda}\right)^{-1} R\left(\lambda, Y_{0}\right) & \left(1-K_{\lambda} L_{\lambda}\right)^{-1} K_{\lambda} R\left(\lambda, \mathfrak{A}_{0}\right) \\
			\left(1-L_{\lambda} K_{\lambda}\right)^{-1} L_{\lambda} R\left(\lambda, Y_{0}\right) & \left(1-L_{\lambda} K_{\lambda}\right)^{-1} R\left(\lambda, \mathfrak{A}_{0}\right)
		\end{array}\right).
	\end{equation}
\end{theorem}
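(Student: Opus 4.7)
The plan is to base everything on Lemma \ref{th4.5} and Riesz--Schauder theory (Lemma \ref{lemma2.8}), after first activating their hypotheses. The key preliminary observation is that the range of $L_{\lambda}$ is contained in the one-dimensional space $\operatorname{span}\{\varphi_{\lambda}\}$ (because $\Phi y$ is a scalar), so both $K_{\lambda} L_{\lambda} \in \mathscr{L}(E)$ and $L_{\lambda} K_{\lambda} \in \mathscr{L}(\mathfrak{X})$ have rank one and hence are compact. This activates the final sentence of Lemma \ref{th4.5} and yields
\[
\lambda \in \sigma(\mathscr{A}_{1}) \ \Longleftrightarrow \ 1 \in \sigma(K_{\lambda} L_{\lambda}) \ \Longleftrightarrow \ 1 \in \sigma(L_{\lambda} K_{\lambda}).
\]
Since $L_{\lambda} K_{\lambda}$ is compact and $1 \neq 0$, Lemma \ref{lemma2.8}(ii) further gives $1 \in \sigma(L_{\lambda} K_{\lambda}) \Leftrightarrow 1 \in \sigma_{p}(L_{\lambda} K_{\lambda})$.

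The remaining equivalence $1 \in \sigma_{p}(L_{\lambda} K_{\lambda}) \Leftrightarrow \lambda \in \sigma_{p}(\mathscr{A}_{1})$ I would establish by an explicit eigenvector correspondence. For any $(y,f)^{T} \in D(\mathscr{A}_{1})$ satisfying $\mathscr{A}_{1}(y,f)^{T} = \lambda (y,f)^{T}$, one has $y \in \ker(\lambda - Y_{m}) = K_{\lambda} \mathfrak{X}$ and $f \in \ker(\lambda - \mathfrak{A}_{m}) = \langle \varphi_{\lambda} \rangle$, so $y = K_{\lambda} g$ for some $g \in \mathfrak{X}$. Using $G K_{\lambda} = I_{\mathfrak{X}}$ and $\mathcal{P}\varphi_{\lambda} = 1$ from Lemma \ref{le4.3}, the boundary condition $Gy = f$ forces $g = f$, and $\mathcal{P}f = \Phi y$ then reads $\Phi(K_{\lambda} f)\varphi_{\lambda} = f$, i.e. $L_{\lambda} K_{\lambda} f = f$. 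Nontriviality excludes $f = 0$, since $f = 0$ would imply $y = K_{\lambda}\cdot 0 = 0$. The converse is immediate: given nonzero $f$ with $L_{\lambda} K_{\lambda} f = f$, set $y := K_{\lambda} f$ and verify via the same identities that $(y,f)^{T} \in D(\mathscr{A}_{1}) \setminus \{0\}$ lies in $\ker(\lambda I - \mathscr{A}_{1})$.

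For part (ii), with $\lambda \in \rho(\mathscr{A}_{1})$ (equivalently $1 \in \rho(L_{\lambda} K_{\lambda})$), my approach is to solve
\[
(\lambda I - \mathscr{A}_{1})\binom{y}{f} = \binom{z_{1}}{z_{2}}
\]
via the ansatz $y = R(\lambda,Y_{0}) z_{1} + K_{\lambda} g$, $f = R(\lambda,\mathfrak{A}_{0}) z_{2} + c\varphi_{\lambda}$, with $g \in \mathfrak{X}$ and $c \in \mathbb{C}$ to be determined. This captures every solution, because $D(Y_{m}) = \ker G \oplus K_{\lambda}\mathfrak{X}$ and $D(\mathfrak{A}_{m}) = \ker \mathcal{P} \oplus \langle \varphi_{\lambda} \rangle$. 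Substituting into the boundary conditions $Gy = f$ and $\mathcal{P}f = \Phi y$, and exploiting $GR(\lambda,Y_{0}) = 0$, $\mathcal{P}R(\lambda,\mathfrak{A}_{0}) = 0$ together with the Lemma \ref{le4.3} identities, yields a $2\times 2$ coupled linear system for $g$ and $c\varphi_{\lambda}$ whose solvability is equivalent to invertibility of $I - L_{\lambda} K_{\lambda}$ on $\mathfrak{X}$. Solving it and then redistributing the inverse via the intertwining identity $K_{\lambda}(I - L_{\lambda} K_{\lambda})^{-1} = (I - K_{\lambda} L_{\lambda})^{-1} K_{\lambda}$ (and its $L_{\lambda}$-analogue) packages the four resulting terms into the block matrix form displayed in the statement.

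The main obstacle I anticipate is purely algebraic rather than conceptual: reconciling the asymmetric expressions that fall out of the ansatz with the symmetric block form of the theorem, which requires carefully placing $(I - K_{\lambda} L_{\lambda})^{-1}$ in the top row and $(I - L_{\lambda} K_{\lambda})^{-1}$ in the bottom row via the intertwining identities above. No further spectral-theoretic input beyond Lemmas \ref{lemma2.8}, \ref{le4.3}, and \ref{th4.5} is required.
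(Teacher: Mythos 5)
Your proposal is correct and arrives at the paper's formula, but the route for part (ii) differs from the paper's. For part (i), both you and the paper rely on the rank-one (hence compact) nature of $K_{\lambda}L_{\lambda}$ and $L_{\lambda}K_{\lambda}$, Lemma \ref{th4.5}, and Riesz--Schauder; the paper actually gives no separate argument for part (i), leaving implicit the final equivalence $1 \in \sigma_{p}(L_{\lambda}K_{\lambda}) \Leftrightarrow \lambda \in \sigma_{p}(\mathscr{A}_{1})$, whereas you supply the explicit eigenvector correspondence $f \mapsto (K_{\lambda}f, f)^{T}$, which is a genuine improvement in completeness. For part (ii), the paper factors
\[
\lambda - \mathscr{A}_{1} = \left(\begin{array}{cc}\lambda-Y_{0} & 0\\ 0 & \lambda-\mathfrak{A}_{0}\end{array}\right)\mathcal{B}_{\lambda}, \qquad \mathcal{B}_{\lambda}=\left(\begin{array}{cc}I & -K_{\lambda}\\ -L_{\lambda} & I\end{array}\right),
\]
after checking that $\mathcal{B}_{\lambda}$ maps $D(\mathscr{A}_{1})$ into $D(Y_{0})\times D(\mathfrak{A}_{0})$ (precisely via $GK_{\lambda}=I$ and $\mathcal{P}L_{\lambda}=\Phi$), and then reads off $R(\lambda,\mathscr{A}_{1})=\mathcal{B}_{\lambda}^{-1}\operatorname{diag}(R(\lambda,Y_{0}),R(\lambda,\mathfrak{A}_{0}))$ with the $2\times 2$ block inverse formula delivering the four entries at once. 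Your ansatz $y=R(\lambda,Y_{0})z_{1}+K_{\lambda}g$, $f=R(\lambda,\mathfrak{A}_{0})z_{2}+c\varphi_{\lambda}$ with boundary conditions imposed afterwards is a valid, more elementary alternative; the obstacle you flag (needing the intertwining identity $K_{\lambda}(I-L_{\lambda}K_{\lambda})^{-1}=(I-K_{\lambda}L_{\lambda})^{-1}K_{\lambda}$ to re-symmetrize the answer) is exactly what the factorization sidesteps, since the block inverse of $\mathcal{B}_{\lambda}$ is already in the desired form.
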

\begin{proof}
	We just need to verify \eqref{th4.6 4.3}.  For \(\lambda \in \rho\left(Y_{0}\right) \cap \rho\left(\mathfrak{A}_{0}\right)\), we have
	\begin{equation}\label{le4.4 4.2}
		\left(\lambda-\mathscr{A}_{1}\right)=\left(\begin{array}{cc}
			\lambda-Y_{0} & 0 \\
			0 & \lambda-\mathfrak{A}_{0}
		\end{array}\right) \mathcal{B}_{\lambda},
	\end{equation}
	where \(\mathcal{B}_{\lambda}:=\left(\begin{array}{cc}\text { Id } & -K_{\lambda} \\ -L_{\lambda} & I d\end{array}\right)\) is a bounded linear matrix operator on \(D\left(Y_{m}\right) \times\)
	\(D\left(\mathfrak{A}_{m}\right)\) and the matrix \(\left(\begin{array}{cc}\lambda-Y_{0} & 0 \\ 0 & \lambda-\mathfrak{A}_{0}\end{array}\right)\) has domain \(D\left(Y_{0}\right) \times D\left(\mathfrak{A}_{0}\right)\). 
	The inverse of \(\left(\lambda-\mathscr{A}_{1}\right)\) is
	$$
	R\left(\lambda, \mathscr{A}_{1}\right)=\mathcal{B}_{\lambda}^{-1}\left(\begin{array}{cc}
		R\left(\lambda, Y_{0}\right) & 0 \\
		0 & R\left(\lambda, \mathfrak{A}_{0}\right)
	\end{array}\right).
	$$
	By the definition of \(\mathcal{B}_{\lambda}\), we get
	$$
	\mathcal{B}_{\lambda}^{-1}=\left(\begin{array}{cc}
		\left(1-K_{\lambda} L_{\lambda}\right)^{-1} & \left(1-K_{\lambda} L_{\lambda}\right)^{-1} K_{\lambda} \\
		\left(1-L_{\lambda} K_{\lambda}\right)^{-1} L_{\lambda} & \left(1-L_{\lambda} K_{\lambda}\right)^{-1}
	\end{array}\right).
	$$
	Therefore expression \eqref{th4.6 4.3} follows.
\end{proof}

We conclude this section by establishing a criterion to guarantee the positivity of the governing linear semigroup.  
\begin{theorem}\label{4.8}
	Suppose that
	\begin{equation}\label{4.2}
		\begin{aligned}
			&\int_{-\theta}^{0} \beta\left(\cdot, \tau, Q_{*}(\cdot)\right) \ud \tau+ w(\cdot)\left(\int_{0}^{\cdot} \int_{-\theta}^{0} \beta_{Q}\left(y, \tau, Q_{*}(y)\right) p_{*}(y) \ud \tau \ud y\right. \\
			&\left.+\alpha \int_{\cdot}^{m} \int_{-\theta}^{0} \beta_{Q}\left(y, \tau, Q_{*}(y)\right) p_{*}(y) \ud \tau \ud y\right) \geq 0,
		\end{aligned}
	\end{equation}
	then the semigroup \((\mathscr{T}(t))_{t\geqslant 0} \), generated by the operator \((\mathscr{A}, D(\mathscr{A}))\) is positive.
\end{theorem}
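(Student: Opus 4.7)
The plan is to apply Lemma \ref{le2.7}: the semigroup $(\mathscr{T}(t))_{t\geq 0}$ is positive iff the resolvent $R(\lambda,\mathscr{A})$ is positive for all sufficiently large $\lambda>0$. I would structure the argument around the splitting $\mathscr{A}=\mathscr{A}_1+\mathscr{A}_2$ and the explicit resolvent formula \eqref{th4.6 4.3}, first establishing positivity of $R(\lambda,\mathscr{A}_1)$ and then passing to $R(\lambda,\mathscr{A})$ by a bounded perturbation argument.

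First I would rewrite the delay functional $\Phi$ in a single-integral form that exposes the role of hypothesis \eqref{4.2}. Applying Fubini to swap the inner $r$-integral with the outer $s$-integral in the second summand of $\Phi$, one obtains $\Phi(y)=\int_0^m\int_{-\theta}^0 \mathcal{K}(s,\tau)\,y(s,\tau)\,\ud\tau\,\ud s$ with kernel $\mathcal{K}(s,\tau)=\beta(s,\tau,Q_*(s))+w(s)\bigl(\int_0^s p_*(y)\beta_Q(y,\tau,Q_*(y))\,\ud y+\alpha\int_s^m p_*(y)\beta_Q(y,\tau,Q_*(y))\,\ud y\bigr)$; integrating this over $\tau$ recovers precisely the left-hand side of \eqref{4.2}. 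Consequently \eqref{4.2} delivers the positivity of $\Phi$ as a linear functional on the natural cone of $E$.

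Next I would verify positivity of each ingredient entering \eqref{th4.6 4.3}. For $\lambda>0$ large: $R(\lambda,Y_0)$ is positive because the nilpotent left shift $(\mathfrak{T}_0(t))_{t\geq 0}$ on $E$ is positive; $R(\lambda,\mathfrak{A}_0)$ is positive by inspection of the explicit formula \eqref{fg}, whose integrand is nonnegative whenever $\tilde f\geq 0$; the Dirichlet operator $K_\lambda$ acts by multiplication with $\epsilon_\lambda(\tau)=e^{\lambda\tau}>0$; and $L_\lambda=(1\circ\varphi_\lambda)\Phi$ combines the strictly positive multiplier $\varphi_\lambda$ with the positive functional $\Phi$ established above. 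For $\lambda$ large enough that $\|K_\lambda L_\lambda\|,\|L_\lambda K_\lambda\|<1$, the Neumann expansions $(1-K_\lambda L_\lambda)^{-1}=\sum_{n\geq 0}(K_\lambda L_\lambda)^n$ and its companion are sums of positive operators. Plugging these into \eqref{th4.6 4.3} yields $R(\lambda,\mathscr{A}_1)\geq 0$, so $(\mathscr{A}_1,D(\mathscr{A}_1))$ generates a positive $C_0$-semigroup.

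The final step is to transfer positivity from $\mathscr{A}_1$ to $\mathscr{A}=\mathscr{A}_1+\mathscr{A}_2$; this is the main obstacle because the multiplier $-\varepsilon_*$ inside $\mathscr{A}_2$ is not sign-definite a priori. The standard device is to choose $c>0$ so large that $\mathscr{A}_2+c\,\mathrm{Id}$ acts as a positive bounded operator on $\mathscr{X}$ and to rewrite $\mathscr{A}=(\mathscr{A}_1-c\,\mathrm{Id})+(\mathscr{A}_2+c\,\mathrm{Id})$; the shifted generator $\mathscr{A}_1-c\,\mathrm{Id}$ still generates a positive semigroup (a constant shift merely rescales by $e^{-ct}$), and the Neumann/Dyson--Phillips expansion $R(\lambda,\mathscr{A})=\sum_{n\geq 0}R(\lambda,\mathscr{A}_1-c\,\mathrm{Id})\bigl[(\mathscr{A}_2+c\,\mathrm{Id})R(\lambda,\mathscr{A}_1-c\,\mathrm{Id})\bigr]^n$ (convergent for $\lambda$ large) then exhibits $R(\lambda,\mathscr{A})$ as a series of compositions of positive operators, hence positive. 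Lemma \ref{le2.7} then yields positivity of $(\mathscr{T}(t))_{t\geq 0}$ and completes the argument.
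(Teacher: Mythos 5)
Your treatment of $\mathscr{A}_1$ mirrors the paper's: positivity of $R(\lambda,Y_0)$, $R(\lambda,\mathfrak{A}_0)$, the Dirichlet operators $K_\lambda$, $L_\lambda$ (using that \eqref{4.2} makes $\Phi$, hence $L_\lambda$, positive), and the Neumann expansion of $(1-K_\lambda L_\lambda)^{-1}$ plugged into \eqref{th4.6 4.3} is exactly the route in the paper's proof. The Fubini rewriting of $\Phi$ is a helpful explicit check that \eqref{4.2} is the right hypothesis.

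The gap is in your final step. You propose choosing $c>0$ so large that $\mathscr{A}_2+c\,\mathrm{Id}$ is a positive operator and then running a Dyson--Phillips expansion. That shift device is standard when the perturbation is a multiplication operator, but $\mathfrak{B}_m f=-\varepsilon_*(\cdot)\int_0^m f$ is a rank-one operator, and no choice of $c$ makes $c\,\mathrm{Id}+\mathfrak{B}_m$ positive unless $\varepsilon_*\leq 0$. Indeed, if $\varepsilon_*(s_0)>0$ on a set of positive measure, pick $f\geq 0$ supported away from $s_0$ with $\int_0^m f>0$; then for $s$ near $s_0$,
\begin{equation*}
\bigl(cf+\mathfrak{B}_m f\bigr)(s)=c f(s)-\varepsilon_*(s)\int_0^m f(y)\,\ud y=-\varepsilon_*(s)\int_0^m f(y)\,\ud y<0,
\end{equation*}
independently of $c$. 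For the same reason $\mathfrak{B}_m$ is not positive-off-diagonal, so the perturbed Dyson--Phillips series is not termwise positive and the argument stalls. Note that the paper instead asserts directly that $\mathscr{A}_2$ is positive; that assertion itself requires a sign condition on $\varepsilon_*$ (not on $\beta$, which is what \eqref{4.2} controls), so the hypothesis you actually need here is $\varepsilon_*\leq 0$ (or simply $\varepsilon_*\equiv 0$, as in Theorem \ref{th5.4}). Either add that assumption explicitly and then the usual bounded positive perturbation argument closes the proof, or restrict the claim to the $\mathscr{A}_1$ semigroup; the shift trick as written does not repair the sign-indefiniteness of $\varepsilon_*$.
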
 

\begin{proof}
	Here condition \eqref{4.2} is a direct generalisation of the positivity condition corresponding to the age-structured model established in \cite{JP1983}. 
	If $\beta_{Q}\equiv 0$,  condition \eqref{4.2} is trivially satisfied.
	Condition \eqref{4.2} guarantees that operator \(\mathscr{A}_{2}\) is positive, then we only need to show that the semigroup \((\mathscr{T}_{1}(t))_{t \geq 0}\) generated by the operator \(\mathscr{A}_{1}\) is positive.
	Firstly, we consider the operator \(K_{\lambda} L_{\lambda}\). By the definitions of \(K_{\lambda}\) and \(L_{\lambda}\) in Lemma \ref{le4.3}, it is clear that
	$$
	\lim _{\operatorname{Re\lambda} \rightarrow+\infty}\left\|K_{\lambda} L_{\lambda}\right\|=0.
	$$
	For \(\mathrm{Re} \lambda \) is sufficiently large, we have  \(\left\|K_{\lambda} L_{\lambda}\right\|<1\). 
	The operator \(\left(1-K_{\lambda} L_{\lambda}\right)\) is invertible, and the Neumann series determines its inverse \(\left(1-K_{\lambda} L_{\lambda}\right)^{-1}\). Distinctly, the condition \eqref{4.2}  implies that \(K_{\lambda} L_{\lambda}\) is a positive operator, and \(\left(1-K_{\lambda} L_{\lambda}\right)^{-1}\) is positive as well if \(\mathrm{Re}  \lambda\) is large enough. 
	Hence from the representation \eqref{th4.6 4.3}, we see that \(R\left(\lambda, \mathscr{A}_{1}\right)\) is non-negative for such $\lambda$. Therefore, in combination with Lemma \ref{le2.7} above, the operator \(\left(\mathscr{A}_{1}, D\left(\mathscr{A}_{1}\right)\right)\) generates a positive semigroup on the Banach lattice \(E \times \mathfrak{X}\), which concludes the proof.
\end{proof}

The following result can be established using results from the theory of positive semigroups (see e.g. \cite{KJ2000,AP1983} and also \cite{F2007,FH2008} for
similar results).

\begin{proposition} \label{coro4.7}
	Suppose that condition \eqref{4.2} is satisfied. Then  $s(\mathscr{A}) \in \sigma(\mathscr{A})$. Specifically, $s(\mathscr{A})$ is a dominant  eigenvalue, namely
	$$
	s(\mathscr{A})=\sup \{\mathrm{Re} \lambda \mid \lambda \in \sigma_{p}(\mathscr{A})\}.
	$$
\end{proposition}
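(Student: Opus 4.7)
The plan is to combine the positivity of the governing semigroup with the discreteness of the spectrum of its generator. First I would invoke Theorem~\ref{4.8} to conclude that, under hypothesis~\eqref{4.2}, the semigroup $(\mathscr{T}(t))_{t\geq 0}$ is positive on the Banach lattice $\mathscr{X}$. Second, Theorem~\ref{theorem4.1} identifies $\sigma(\mathscr{A})$ as a discrete set of isolated eigenvalues of finite algebraic multiplicity, so in particular $\sigma(\mathscr{A})=\sigma_{p}(\mathscr{A})$. Thus, provided we can show $s(\mathscr{A})\in\sigma(\mathscr{A})$, the identity $s(\mathscr{A})=\sup\{\mathrm{Re}\,\lambda\mid\lambda\in\sigma_{p}(\mathscr{A})\}$ follows at once from the definition of the spectral bound.

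The core step is to invoke the Perron--Frobenius-type theorem for positive $C_{0}$-semigroups whose generator has compact resolvent (see the treatment in \cite{KJ2000} and the parallel applications in \cite{F2007,FH2008}): under such hypotheses one has $s(A)>-\infty$ and $s(A)\in\sigma(A)$, in fact as a pole of the resolvent. To deploy this I would first verify that $R(\lambda,\mathscr{A})$ is a compact operator, which is essentially built into the proof of Theorem~\ref{theorem4.1}: the resolvent of $\mathscr{A}_{1}$ sends $L^{1}(0,m)$ boundedly into $W^{1,1}(0,m)$ and the Sobolev embedding $W^{1,1}(0,m)\hookrightarrow L^{1}(0,m)$ is compact, while the bounded additive perturbation $\mathscr{A}_{2}$ preserves compactness of the resolvent via the second resolvent identity. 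Combined with the positivity from Theorem~\ref{4.8}, this gives $s(\mathscr{A})\in\sigma_{p}(\mathscr{A})$ and places it to the right of the real part of every other spectral point, i.e.\ it is a dominant eigenvalue.

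The main technical obstacle I anticipate is ensuring that $\sigma(\mathscr{A})$ is non-empty, so that $s(\mathscr{A})>-\infty$ and the statement is non-vacuous. This is a standard consequence of compactness of the resolvent together with the non-triviality of the generator, but a cleaner route is to appeal to the characteristic equation derived in Section~5, whose roots are precisely the points of $\sigma_{p}(\mathscr{A})$; the balance condition $\mathscr{R}(P_{*},Q_{*})=1$ at the steady state guarantees that this equation is non-trivially solvable. Once the spectrum is known to be non-empty and bounded above, the quoted positive-semigroup theorem delivers the desired conclusion without further calculation.
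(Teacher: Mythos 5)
The paper itself offers no proof for this proposition, merely pointing at the positive-semigroup literature, so your reconstruction is the right \emph{kind} of argument and matches the intent: positivity (Theorem~\ref{4.8}) plus a Perron--Frobenius-type result plus the discreteness of $\sigma(\mathscr{A})$ (Theorem~\ref{theorem4.1}) to upgrade $s(\mathscr{A})\in\sigma(\mathscr{A})$ to $s(\mathscr{A})\in\sigma_{p}(\mathscr{A})$.

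There is, however, a genuine flaw in the way you set up the middle step. You claim $R(\lambda,\mathscr{A})$ is compact by transporting the Sobolev-embedding argument from the proof of Theorem~\ref{theorem4.1}. That argument only addresses the $\mathfrak{X}=L^{1}(0,m)$ component: $R(\lambda,\mathfrak{A}_{0})$ does map $L^{1}(0,m)$ into $W^{1,1}(0,m)$, which embeds compactly. But $\mathscr{A}$ acts on the product space $\mathscr{X}=E\times\mathfrak{X}$ with $E=L^{1}([-\theta,0],L^{1}(0,m))$, and the history operator $Y_{0}$ gains $W^{1,1}$-regularity only in the delay variable $\tau$, not in $s$. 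The embedding $W^{1,1}([-\theta,0],L^{1}(0,m))\hookrightarrow L^{1}([-\theta,0],L^{1}(0,m))$ is \emph{not} compact since the fibre $L^{1}(0,m)$ is infinite dimensional; consequently the top-left block $(1-K_{\lambda}L_{\lambda})^{-1}R(\lambda,Y_{0})$ in the representation \eqref{th4.6 4.3} is not compact, and neither is $R(\lambda,\mathscr{A})$. (You inherited this confusion from the paper's own loosely-worded proof of Theorem~\ref{theorem4.1}.) The fix is to drop compactness of the resolvent entirely: the Perron--Frobenius statement you need is the one that requires only positivity and $s(A)>-\infty$, namely Theorem~VI.1.10 in \cite{KJ2000}, which gives $s(\mathscr{A})\in\sigma(\mathscr{A})$ directly from Theorem~\ref{4.8} once $\sigma(\mathscr{A})\neq\emptyset$; Theorem~\ref{theorem4.1} (whose conclusion is better justified through the rank-one operators $K_{\lambda}L_{\lambda}$, $L_{\lambda}K_{\lambda}$ and the equivalence in Theorem~\ref{theo4.6}(i), rather than through compactness of $R(\lambda,\mathscr{A}_{1})$) then promotes this to a dominant eigenvalue. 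Your concern about non-emptiness of $\sigma(\mathscr{A})$ is well taken, but note that the balance condition $\mathscr{R}(P_{*},Q_{*})=1$ does not by itself force the characteristic equation $K(\lambda)=0$ to have a root; this is a point the paper silently glosses over and would need a separate hypothesis or argument.
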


\section{The characteristic equation}

The linear stability of stationary solutions of
model \eqref{1.1} is determined by the eigenvalues of the semigroup generator $\mathscr{A}$ according to the results we derived in the previous section. In this section, we derive an explicit  characteristic equation to study the position of the eigenvalues of the generator $\mathscr{A}$.

The eigenvalue equation
\begin{equation}\label{5.1}
	(\lambda I-\mathscr{A}) u=0
\end{equation}
for $\lambda \in \mathbb{C}$ and non-trivial $u$ is equivalent to the system
\begin{equation}{\label{5.2}}
	\begin{aligned}
	0= &\gamma_{*}(s) u^{\prime}(s)+(\lambda+\nu_{*}(s)) u(s)+ \varepsilon_{*}(s) \bar{U},
	\\[5pt]    
	u(0)= & \int_{0}^{m} \int_{-\theta}^{0} e^{\lambda \tau}\left(\beta(s, \tau, Q_{*}(s)) u(s)+\beta_{Q}(s, \tau, Q_{*}(s)) p_{*}(s) H(s)\right) \ud \tau \ud s,
	\end{aligned}
\end{equation}
where $\bar{U}= \int_{0}^{m} u(s) \ud s$ and
\begin{equation}\label{5.5}
	\begin{aligned}
		H(s) &=\alpha \int_{0}^{s} w(r) u(r) \ud r+\int_{s}^{m} w(r) u(r) \ud r \\
		&=(\alpha-1) \int_{0}^{s} w(r) u(r) \ud r+\int_{0}^{m} w(r) u(r) \ud r.
	\end{aligned}
\end{equation}

\noindent We assume that $\alpha \in[0,1)$ holds for the rest of this section. From \eqref{5.5},  we have
\begin{equation}\label{5.6}
	H^{\prime}(s)=(\alpha-1) w(s) u(s) \text { and } H^{\prime \prime}(s)=(\alpha-1)\left(w^{\prime}(s) u(s)+w(s) u^{\prime}(s)\right).
\end{equation}
Using relations \eqref{5.6}, we can write  system \eqref{5.2} in the form of $H$ as well as its derivatives 
\begin{equation}\label{5.7}
	H^{\prime \prime}(s)+\left(\frac{\lambda+\nu_{*}(s)}{\gamma_{*}(s)}-\frac{w^{\prime}(s)}{w(s)}\right) H^{\prime}(s)+(\alpha-1)\bar{U}\frac{w(s)\varepsilon_{*}(s)}{\gamma_{*}(s)}=0.
\end{equation}
Eq. \eqref{5.7} is accompanied by boundary conditions of the form
\begin{equation}\label{5.8}
	\alpha H(0)=H(m),
\end{equation}
\begin{equation}\label{5.9}
	\begin{aligned}
		H^{\prime}(0)=&w(0) \int_{0}^{m} \int_{-\theta}^{0} e^{\lambda \tau} \frac{\beta\left(s, \tau, Q_{*}(s)\right)}{w(s)} H^{\prime}(s) \ud \tau \ud s \\
		&+(\alpha-1) w(0) \int_{0}^{m} \int_{-\theta}^{0} e^{\lambda \tau} \beta_{Q}\left(s, \tau, Q_{*}(s)\right) p_{*}(s) H(s) \ud \tau \ud s .
	\end{aligned}
\end{equation}
Thus the general solution of \eqref{5.7} can be expressed as
\begin{equation}\label{5.10}
	H(s)=H(0)+H^{\prime}(0) \int_{0}^{s} \frac{w(y)}{w(0)} \pi_{*}(\lambda, y) \d y+(1-\alpha)\int_{0}^{s} w(y) \pi_{*}(\lambda, y)\int_{0}^{y}\frac{\varepsilon_{*}(r) \bar{U}}{\pi_{*}(\lambda, y)\gamma_{*}(r)} \ud r \ud y,
\end{equation}
where
$$
\pi_{*}(\lambda, y)=e^{-\int_{0}^{y} \frac{\lambda+\gamma_{s}(a,P_{*})+\mu(a,P_{*})}{\gamma(a,P_{*})} \d a}.
$$
Meanwhile, substituting the solution \eqref{5.10} into \eqref{5.9},  we get
\begin{equation}\label{5.11}
	\begin{aligned}
		0&=
		H(0)(\alpha-1) w(0) \int_{0}^{m} \int_{-\theta}^{0} e^{\lambda \tau} \beta_{Q}\left(s, \tau, Q(s)_{*}(s)\right) p_{*}(s)  \ud \tau \ud s\\
		&+
		H^{\prime}(0)\left(1-\int_{0}^{m}\int_{-\theta}^{0}e^{\lambda\tau}\beta(s,\tau,Q_{*}(s))\pi_{*}(\lambda, s)\ud \tau \ud s\right)\\
		&+
		H^{\prime}(0)(1-\alpha) \int_{0}^{m} \int_{-\theta}^{0} e^{\lambda \tau} \beta_{Q}\left(s, \tau, Q_{*}(s)\right) p_{*}(s)\int_{0}^{s} w(y)\pi_{*}(\lambda, y)\ud y  \ud \tau \ud s\\
		&-
		\bar{U}\int_{0}^{m} \int_{-\theta}^{0}e^{\lambda\tau}\beta(s,\tau,Q_{*}(s))\pi_{*}(\lambda, s) \int_{0}^{s}   \frac{\varepsilon_{*}(y)w(0)(1-\alpha) }{\pi_{*}(\lambda, y)\gamma_{*}(y)} \ud y \ud \tau \ud s\\
		&+
		\bar{U}
		\int_{0}^{m}\int_{-\theta}^{0}\left(e^{\lambda\tau}\beta_{Q}(s,\tau,Q_{*}(s))p_{*}(s)\int_{0}^{s} w(y)\pi_{*}(\lambda, y) \int_{0}^{y}\frac{\varepsilon_{*}(r)w(0)(1-\alpha)^{2} }{\pi_{*}(\lambda, r)\gamma_{*}(r)} \ud r \ud y \right) \ud \tau \ud s.
	\end{aligned}
\end{equation}
Using the boundary condition \eqref{5.8} and the solution \eqref{5.10}, we obtain
\begin{equation}\label{5.12}
	(1-\alpha) H(0)+H^{\prime}(0) \int_{0}^{m} \frac{w(s)}{w(0)} \pi_{*}(\lambda, s) \ud s + \bar{U} \int_{0}^{m} w(s) \pi_{*}(\lambda, s)\int_{0}^{s}\frac{\varepsilon_{*}(y)(1-\alpha) }{\pi_{*}(\lambda, y)\gamma_{*}(y)} \ud y \ud s =0.
\end{equation}
The general solution of Eq.$\eqref{5.2}_{a}$ takes the form
\begin{equation}\label{5.13}
	u(s)=u(0) \pi_{*}(\lambda, s)-\pi_{*}(\lambda, s) \int_{0}^{s}\frac{\varepsilon_{*}(y)\bar{U}}{\pi_{*}(\lambda, y)\gamma_{*}(y) } \ud y .
\end{equation}
Integrating \eqref{5.13} from 0 to \(m\), we obtain
\begin{equation}\label{5.14}
	\begin{aligned}
		\bar{U}=&-\bar{U}\int_{0}^{m} \pi_{*}(\lambda, s)
		\int_{0}^{s}\frac{\varepsilon_{*}(y)}{ \pi_{*}(\lambda, y)\gamma_{*}(y)} \ud y \ud s +U(0) \int_{0}^{m} \pi_{*}(\lambda, s) \ud s.  
	\end{aligned}
\end{equation}
Eqs.\eqref{5.11}, \eqref{5.14} and the boundary condition \eqref{5.6} imply that
\begin{equation}\label{5.15}
	\begin{aligned}
		0&=
		H(0) \int_{0}^{m}\pi_{*}(\lambda, s) \ud s  \int_{0}^{m} \int_{-\theta}^{0}  e^{\lambda \tau} \beta_{Q}\left(s, \tau, Q_{*}(s)\right) p_{*}(s)  \ud \tau \ud s\\
		&+
		H^{\prime}(0)\int_{0}^{m}\pi_{*}(\lambda, s) \ud s \int_{0}^{m}\int_{-\theta}^{0}e^{\lambda\tau}\beta(s,\tau,Q_{*}(s))\frac{\pi_{*}(\lambda, s)}{w(0)(\alpha-1)}\ud \tau \ud s\\
		&+
		H^{\prime}(0)\int_{0}^{m}\pi_{*}(\lambda, s) \ud s\int_{0}^{m} \int_{-\theta}^{0} e^{\lambda \tau} \beta_{Q}\left(s, \tau, Q_{*}(s)\right) p_{*}(s)\int_{0}^{s} \frac{w(y)}{w(0)}\pi_{*}(\lambda, y)\ud y  \ud \tau \ud s\\
		&+
		\bar{U}\int_{0}^{m}\pi_{*}(\lambda, s) \ud s 
		\int_{0}^{m} \int_{-\theta}^{0}e^{\lambda\tau}\beta_{Q}(s,\tau,Q_{*}(s))p_{*}(s) \int_{0}^{s}  \int_{0}^{y} \frac{\varepsilon_{*}(r)(1-\alpha) }{\pi_{*}(\lambda, r)\gamma_{*}(r)}w(y) \pi_{*}(\lambda, y)\ud r \ud y \ud \tau \ud s\\
		&-
		\bar{U}\int_{0}^{m}\pi_{*}(\lambda, s) \ud s 
		\int_{0}^{m}\int_{-\theta}^{0}e^{\lambda\tau}\beta(s,\tau,Q_{*}(s))\pi_{*}(\lambda, s)\int_{0}^{s}\frac{\varepsilon_{*}(y) }{\pi_{*}(\lambda, y)\gamma_{*}(y)} \ud y \ud \tau \ud s\\
		&-
		\bar{U}\left(1+\int_{0}^{m}\pi_{*}(\lambda, s) \int_{0}^{s}\frac{\varepsilon_{*}(y) }{\pi_{*}(\lambda, y)\gamma_{*}(y)} \ud y \ud s \right).
	\end{aligned}
\end{equation}
Hence, the linear system composed of \eqref{5.11}, \eqref{5.12} and \eqref{5.15} has a non-zero solution $(H(0),H^{\prime}(0),\bar{U})$ if and only if $\lambda$ satisfies the equation 
\begin{equation}\label{5.16}
	\left(\begin{array}{lll}A_{11}(\lambda) & A_{12}(\lambda) & A_{13}(\lambda) \\ A_{21}(\lambda) & A_{22}(\lambda) & A_{23}(\lambda)\\  A_{31}(\lambda) & A_{32}(\lambda) & A_{33}(\lambda) \end{array}\right)\left(\begin{array}{c} H(0)\\ H^{\prime}(0) \\ \bar{U}(0)\end{array}\right)=0,
\end{equation}
where we define
\begin{align*}
	A_{11}(\lambda) =& (1-\alpha) w(0) \int_{0}^{m} \int_{-\theta}^{0} e^{\lambda \tau} \beta_{Q}\left(s, \tau, Q_{*}(s)\right) p_{*}(s)  \ud \tau \ud s,\\
	A_{12}(\lambda) =& 1-\int_{0}^{m}\int_{-\theta}^{0}e^{\lambda\tau}\beta(s,\tau,Q_{*}(s))\pi_{*}(\lambda, s)\ud \tau \ud s\\
	&+
	(1-\alpha) \int_{0}^{m} \int_{-\theta}^{0} e^{\lambda \tau} \beta_{Q}\left(s, \tau, Q_{*}(s)\right) p_{*}(s)\int_{0}^{s} w(y)\pi_{*}(\lambda, y)\ud y  \ud \tau \ud s,\\
	A_{13}(\lambda) =& \int_{0}^{m} \int_{-\theta}^{0}e^{\lambda\tau}\beta(s,\tau,Q_{*}(s))\pi_{*}(\lambda, s) \int_{0}^{s}   \frac{\varepsilon_{*}(y)w(0)(\alpha-1) }{\pi_{*}(\lambda, y)\gamma_{*}(y)} \ud y \ud \tau \ud s\\
	&+
	\int_{0}^{m}\int_{-\theta}^{0}\left(e^{\lambda\tau}\beta_{Q}(s,\tau,Q_{*}(s))p_{*}(s)\int_{0}^{s} w(y)\pi_{*}(\lambda, y) \int_{0}^{y}\frac{\varepsilon_{*}(r)w(0)(1-\alpha)^{2} }{\pi_{*}(\lambda, r)\gamma_{*}(r)} \d r \d y \right) \ud \tau \ud s,\\ 
	A_{21}(\lambda) =&1-\alpha,\\
	A_{22}(\lambda) =& \int_{0}^{m} \frac{w(s)}{w(0)} \pi_{*}(\lambda, s) \ud s, \\
	A_{23}(\lambda) =& \int_{0}^{m} w(s) \pi_{*}(\lambda, s)\int_{0}^{s}\frac{\varepsilon_{*}(y)(1-\alpha) }{\pi_{*}(\lambda, y)\gamma_{*}(y)} \ud y \ud s,\\
	A_{31}(\lambda) =&\int_{0}^{m}\pi_{*}(\lambda, s) \ud s  \int_{0}^{m} \int_{-\theta}^{0}  e^{\lambda \tau} \beta_{Q}\left(s, \tau, Q_{*}(s)\right) p_{*}(s)  \ud \tau \ud s,\\
	A_{32}(\lambda) =&\int_{0}^{m}\pi_{*}(\lambda, s) \ud s \int_{0}^{m}\int_{-\theta}^{0}e^{\lambda\tau}\beta(s,\tau,Q_{*}(s))\frac{\pi_{*}(\lambda, s)}{w(0)(\alpha-1)}\ud \tau \ud s\\
	&+
	\int_{0}^{m}\pi_{*}(\lambda, s) \ud s\int_{0}^{m} \int_{-\theta}^{0} e^{\lambda \tau} \beta_{Q}\left(s, \tau, Q_{*}(s)\right) p_{*}(s) \int_{0}^{s} \frac{w(y)}{w(0)}\pi_{*}(\lambda, y)\ud y   \ud \tau \ud s,\\
	A_{33}(\lambda) =& \int_{0}^{m}\pi_{*}(\lambda, s) \ud s 
	\int_{0}^{m} \int_{-\theta}^{0}e^{\lambda\tau}\beta_{Q}(s,\tau,Q_{*}(s))p_{*}(s) \int_{0}^{s}  \int_{0}^{y} \frac{\varepsilon_{*}(r)(1-\alpha) }{\pi_{*}(\lambda, r)\gamma_{*}(r)}w(y) \pi_{*}(\lambda, y)\ud r \ud y \ud \tau \ud s\\
	&-
	\int_{0}^{m}\pi_{*}(\lambda, s) \ud s 
	\int_{0}^{m}\int_{-\theta}^{0}e^{\lambda\tau}\beta(s,\tau,Q_{*}(s))\pi_{*}(\lambda, s)\int_{0}^{s}\frac{\varepsilon_{*}(y) }{\pi_{*}(\lambda, y)\gamma_{*}(y)} \ud y \ud \tau \ud s\\
	&-
	\left(1+\int_{0}^{m}\pi_{*}(\lambda, s) \int_{0}^{s}\frac{\varepsilon_{*}(y) }{\pi_{*}(\lambda, y)\gamma_{*}(y)} \ud y \ud s\right).
\end{align*}

\begin{proposition}
	$\lambda \in \mathbb{C} $ is a eigenvalue of the operator $\mathscr{A}$ if and only if $\lambda$ is a solution of the following characteristic equation
	\begin{equation}\label{char.}
		K(\lambda)=\left|\begin{array}{cccc}
			A_{11}(\lambda) & A_{12}(\lambda) & A_{13}(\lambda) \\
			A_{21}(\lambda) & A_{22}(\lambda) & A_{23}(\lambda)  \\
			A_{31}(\lambda) & A_{32}(\lambda) & A_{33}(\lambda) 
		\end{array}\right|=0.
	\end{equation}
\end{proposition}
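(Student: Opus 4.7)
The plan is to observe that the derivations from \eqref{5.1} through \eqref{5.15} have already translated the eigenvalue problem $(\lambda I-\mathscr{A})u=0$ into the homogeneous $3\times 3$ linear system \eqref{5.16} in the scalar unknowns $(H(0),H'(0),\bar U)$. The proposition then follows immediately from the elementary criterion that such a system admits a nontrivial solution if and only if its determinant vanishes, which is precisely the condition $K(\lambda)=0$. The proof therefore splits naturally into two implications, after which one must check that the correspondence between triples and eigenfunctions is genuinely nontrivial in both directions.

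In more detail, for the forward implication I would assume $\lambda$ is an eigenvalue with eigenfunction $u\not\equiv 0$, set $H$ by \eqref{5.5} and $\bar U=\int_0^m u\,\ud s$, and use \eqref{5.6}, the ODE in \eqref{5.2}, and the explicit representation \eqref{5.13} to derive in turn \eqref{5.11}, \eqref{5.12} and \eqref{5.15}: namely, \eqref{5.11} encodes the nonlocal boundary condition in \eqref{5.2}, \eqref{5.12} encodes the closure identity $H(m)=\alpha H(0)$ automatically built into \eqref{5.5}, and \eqref{5.15} (in combination with \eqref{5.11}) encodes the self-consistency $\bar U=\int_0^m u\,\ud s$. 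Conversely, given a nontrivial triple solving \eqref{5.16}, I would use $\alpha\in[0,1)$ and $w(0)>0$ to define $u(0):=H'(0)/((\alpha-1)w(0))$ and then $u$ via \eqref{5.13}; the three equations then guarantee respectively that $\int_0^m u\,\ud s=\bar U$, that $H(0)=\int_0^m w(r)u(r)\,\ud r$ so that the $H$ built from this $u$ by \eqref{5.5} coincides with the ansatz \eqref{5.10}, and that $u$ satisfies the nonlocal boundary condition. Hence $u\in D(\mathscr{A})$ with $\mathscr{A}u=\lambda u$.

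The only nonroutine step, which I expect to be the main subtlety, is verifying that this correspondence preserves nontriviality in both directions. Here the standing hypothesis $\alpha\in[0,1)$ together with $w(0)>0$ is essential: the identity $H'(0)=(\alpha-1)w(0)u(0)$ is then a genuine linear isomorphism between $H'(0)$ and $u(0)$, and \eqref{5.13} shows that $u$ is fully determined by the pair $(u(0),\bar U)$. Consequently $(H(0),H'(0),\bar U)=0$ forces $u(0)=0$ and $\bar U=0$, hence $u\equiv 0$; and any nonzero eigenfunction yields a nonzero triple, because $u\not\equiv 0$ implies either $u(0)\neq 0$ or $\bar U\neq 0$. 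Once this is in place, $K(\lambda)=0$ is exactly the condition for \eqref{5.16} to admit a nontrivial solution, and the proposition follows.
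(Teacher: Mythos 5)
Your proposal is correct and follows essentially the same route as the paper: the derivation from \eqref{5.1} to \eqref{5.15} reduces the eigenvalue problem to the homogeneous system \eqref{5.16} in $(H(0),H'(0),\bar U)$, and the proposition is then the elementary fact that a $3\times 3$ homogeneous system has a nontrivial solution iff its determinant vanishes. The paper leaves the nontriviality correspondence between $u$ and $(H(0),H'(0),\bar U)$ implicit, whereas you spell it out via $H'(0)=(\alpha-1)w(0)u(0)$ with $\alpha\in[0,1)$, $w(0)>0$ and \eqref{5.13} --- a useful clarification but not a different argument.
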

In summary, $K(\lambda)$ determines the characteristic equation corresponding to the linearised system \eqref{2-2-linearized} and its zeros are the eigenvalues of the operator $\mathscr{A}$, which completely determine the spectrum of $\mathscr{A}$, and therefore the linear stability of the steady state.

\section{Linear stability analysis}

In the previous section we deduced an explicit characteristic equation corresponding to the linearisation of hierarchical size-structured model \eqref{1.1}. We now use this characteristic equation to derive stability criteria. By virtue of Corollary  
\ref{coro4.7} we can investigate the asymptotic stability and instability of stationary solutions of model \eqref{1.1} using the characteristic equation. Further, we will show how the basic reproduction function $\mathscr{R}(P,Q)$  introduced in Eq. \eqref{2.3} can be used to establish stability and instability conditions.

The first result addresses the stability of the trivial stationary solution \(p_{0}\equiv0\).
\begin{theorem}\label{th5.2}
	The trivial stationary solution \(p_{0}\equiv0\) is linearly asymptotically stable if
	$\mathscr{R}(0,0)<1$, and unstable if $\mathscr{R}(0,0)>1$ holds.
\end{theorem}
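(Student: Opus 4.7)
The plan is to exploit the fact that the general characteristic equation \eqref{char.} collapses dramatically at the trivial steady state, and then reduce the analysis to a scalar Lotka-type equation. First I would observe that at $p_{*}\equiv 0$ one has $P_{*}=0$, $Q_{*}\equiv 0$, $p_{*}(s)=p_{*}'(s)\equiv 0$, and consequently $\varepsilon_{*}(s)\equiv 0$; moreover every factor of the form $\beta_{Q}(s,\tau,Q_{*})p_{*}(s)$ in the linearised boundary operator vanishes identically. Substituting into the formulae for $A_{ij}(\lambda)$, the entries $A_{11},A_{13},A_{23},A_{31}$ all vanish, while $A_{21}=1-\alpha$, $A_{33}=-1$, and
\[
A_{12}(\lambda)=1-\int_{0}^{m}\!\!\int_{-\theta}^{0}e^{\lambda\tau}\beta(s,\tau,0)\,\pi_{*}(\lambda,s)\,\ud\tau\,\ud s \;=:\;1-F(\lambda).
\]
The surviving entries $A_{22},A_{32}$ are irrelevant thanks to the placement of zeros: expanding $K(\lambda)$ along the third column immediately yields $K(\lambda)=(1-\alpha)\bigl(1-F(\lambda)\bigr)$, so since $\alpha\in[0,1)$ the characteristic equation reduces to $F(\lambda)=1$. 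By the definitions \eqref{2.3} and \eqref{2.4} one has the key identity $F(0)=\mathscr{R}(0,0)$.

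Next I would record the analytic properties of $F$ on the closed right half-plane that localise its roots. Writing $\pi_{*}(\lambda,s)=\pi_{*}(0,s)\,e^{-\lambda\Gamma(s)}$ with $\Gamma(s)=\int_{0}^{s}1/\gamma_{*}(y)\,\ud y>0$ for $s>0$ (see \eqref{gamma}), and using $\tau\in[-\theta,0]$, the restriction of $\lambda\mapsto F(\lambda)$ to the real axis is continuous, strictly decreasing, and satisfies $F(\lambda)\to 0$ as $\lambda\to+\infty$. Combining $|e^{\lambda\tau}|=e^{\tau\,\mathrm{Re}\,\lambda}$ with $|\pi_{*}(\lambda,s)|=\pi_{*}(\mathrm{Re}\,\lambda,s)$ and the nonnegativity of $\beta$ yields the dominance estimate $|F(\lambda)|\leq F(\mathrm{Re}\,\lambda)$ for every $\lambda\in\mathbb{C}$.

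The two halves of the theorem now follow quickly. If $\mathscr{R}(0,0)=F(0)<1$, continuity of $F$ furnishes a $\delta>0$ with $F(-\delta)<1$, and the dominance estimate then forces $|F(\lambda)|<1$ for every $\lambda$ with $\mathrm{Re}\,\lambda\geq -\delta$, ruling out all eigenvalues of $\mathscr{A}$ in that half-plane. Invoking the proposition immediately following Theorem \ref{theorem4.1} that identifies the growth bound $\omega_{0}$ with the spectral bound $s(\mathscr{A})$, we obtain $\omega_{0}\leq -\delta<0$, hence linearised asymptotic stability. Conversely, if $\mathscr{R}(0,0)>1$ then $F(0)>1>0=\lim_{\lambda\to+\infty}F(\lambda)$, so the intermediate value theorem produces a real $\lambda_{0}>0$ with $F(\lambda_{0})=1$, yielding a positive real eigenvalue of $\mathscr{A}$ and establishing instability.

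The main obstacle is the bookkeeping in the first step: one must verify carefully that the $\varepsilon_{*}$ and $p_{*}$ factors annihilate precisely the right entries of the $3\times 3$ system so that the cofactor expansion leaves exactly $1-F(\lambda)$ and not some spurious combination with $A_{22}$ or $A_{32}$. Notably, the positivity hypothesis \eqref{4.2} is not required here, since stability is obtained by ruling out every eigenvalue in a half-plane and instability by exhibiting the unstable eigenvalue directly.
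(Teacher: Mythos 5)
Your proposal is correct and reduces the characteristic equation in exactly the same way the paper does, arriving at $K(\lambda)=(1-\alpha)\bigl(1-F(\lambda)\bigr)$ with $F(0)=\mathscr{R}(0,0)$; your instability argument (intermediate value theorem on the positive real axis) also coincides with the paper's. The stability half, however, takes a genuinely different route. The paper invokes the positivity condition \eqref{4.2}, which is trivially satisfied at $p_{0}\equiv0$, together with Proposition~\ref{coro4.7}, so that $s(\mathscr{A})$ is a dominant \emph{real} eigenvalue; it then suffices to note that $\hat K$ is increasing on $\mathbb{R}$ with $\hat K(0)=(1-\alpha)(1-\mathscr{R}(0,0))>0$, hence has no non-negative real zero. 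You instead bypass the positive-semigroup machinery entirely by proving the explicit Lotka-type dominance estimate $|F(\lambda)|\leq F(\mathrm{Re}\,\lambda)$, which excludes eigenvalues from an entire right half-plane $\{\mathrm{Re}\,\lambda\geq-\delta\}$ directly. Both arguments are sound; yours is more elementary and self-contained (it does not rely on Theorem~\ref{4.8} or Proposition~\ref{coro4.7}), whereas the paper's reuses positivity infrastructure it has already built and will redeploy for the positive steady state in Theorem~\ref{th5.4}. Your closing observation that condition \eqref{4.2} is not actually needed for Theorem~\ref{th5.2} is accurate.
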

\begin{proof}
	For \(p_{0}\equiv0\) we have
	\begin{align*}
		\hat{A}_{12}(\lambda) =&\quad 1-\int_{0}^{m}\int_{-\theta}^{0}e^{\lambda\tau}\beta(s,\tau,0)\pi_{0}(\lambda, s)\ud \tau \ud s ,\\
		\hat{A}_{21}(\lambda)=& \quad 1-\alpha,\\
		\hat{A}_{22}(\lambda) =& \quad  \int_{0}^{m} \frac{w(s)}{w(0)} \pi_{0}(\lambda, s) \ud s, \\
		\hat{A}_{32}(\lambda)=& \quad \int_{0}^{m}\pi_{0}(\lambda, s) \ud s \int_{0}^{m}\int_{-\theta}^{0}e^{\lambda\tau}\beta(s,\tau,0)\frac{\pi_{0}(\lambda, s)}{w(0)(\alpha-1)}\ud \tau \ud s, \\
		\hat{A}_{31}(\lambda)=& \quad \hat{A}_{23}(\lambda)=  \hat{A}_{13}(\lambda)= \hat{A}_{11}(\lambda)=0,\\
		\hat{A}_{33}(\lambda)=& \quad -1,
	\end{align*} where  $$\pi_{0}(\lambda, s)=e^{-\int_{0}^{s} \frac{\lambda+\gamma_{s}(a,0)+\mu(a,0)}{\gamma(a,0)} \ud a}.$$
	Hence the characteristic equation \eqref{char.} reduces to
	\begin{equation}\label{K0}
		\begin{array}{c}
			\hat{K}(\lambda)=\left|\begin{array}{cccc}
				0 & \hspace{0.5em} \hat{A}_{12}(\lambda) &  \hspace{0.5em} 0\\
				1-\alpha & \hspace{0.5em} \hat{A}_{22}(\lambda) & \hspace{0.5em} 0\\
				0 & \hspace{0.5em} \hat{A}_{32}(\lambda) & \hspace{0.5em} -1 
			\end{array}\right| 
			= (1-\alpha)\hat{A}_{12}(\lambda), \text{ for } 0\leq \alpha <1.
		\end{array}
	\end{equation}
	It is readily observed that
	\begin{equation}
		\hat{K}(0)= (1-\alpha)\hat{A}_{12}(0)=(1-\alpha)\left(1-\mathscr{R}(0,0)\right).
	\end{equation}
	Clearly condition \eqref{4.2}  is satisfied and therefore we can restrict the characteristic equation $\hat{K}(\lambda)$ to $\lambda \in \mathbb{R}$. 
	Furthermore, from \eqref{K0}, we have
	\begin{equation}
		\lim _{\lambda \rightarrow+\infty} \hat{K}(\lambda)= 1-\alpha ,\hspace{1mm} \hat{K}^{\prime}(\lambda)=(1-\alpha)\hat{A}^{\prime}_{12}(\lambda)>0.
	\end{equation}
	Therefore, if $\mathscr{R}(0,0)<1$ holds, we can obtain $\hat{K}(0)>0$ and $\hat{K}(\lambda)$ is monotonically increasing, which implies that the characteristic equation \eqref{char.} cannot have non-negative roots. However, if $\mathscr{R}(0,0)>1$ holds, there is a positive root since $\hat{K}(0)<0$. The claim of the theorem follows.
\end{proof}

Next we will address the instability of positive stationary solutions.

\begin{theorem}\label{th5.3}
	Let \(p_{*}(s)\)  be any positive stationary solution of \eqref{1.1} and suppose that all the conditions of Theorem \ref{4.8} are fulfilled. Then the positive stationary solution  \(p_{*}(s)\) is linearly unstable if \(K(0)<0\).
\end{theorem}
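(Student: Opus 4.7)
Under the standing positivity hypothesis (all the assumptions of Theorem \ref{4.8}), Proposition \ref{coro4.7} guarantees that the spectral bound $s(\mathscr{A})$ is attained at a real dominant eigenvalue and coincides with the growth bound $\omega_0$. Hence to prove linear instability it suffices to exhibit a positive real root $\lambda_0>0$ of the characteristic equation \eqref{char.}, for then $\omega_0=s(\mathscr{A})\ge\lambda_0>0$. My plan is to apply the intermediate value theorem to $K$ on $[0,\infty)$, using the hypothesis $K(0)<0$ together with an explicit computation of $\lim_{\lambda\to+\infty}K(\lambda)$.

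First, the map $\lambda\mapsto K(\lambda)$ is continuous (in fact real analytic) on $[0,\infty)$: each entry $A_{ij}(\lambda)$ is a finite sum of integrals over bounded sets whose integrands depend on $\lambda$ only through the factors $e^{\lambda\tau}$ with $\tau\in[-\theta,0]$ and $\pi_*(\lambda,\cdot)$, and these factors are bounded by $1$ uniformly in $\lambda\ge 0$, so continuity follows by dominated convergence. Combined with the hypothesis $K(0)<0$, it will then be enough to show that $K(\lambda)$ is eventually positive.

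The main work is the limit $\lambda\to+\infty$. Writing $\pi_*(\lambda,s)=e^{-\lambda\Gamma(s)}\pi_*(0,s)$ with $\Gamma$ defined by \eqref{gamma}, strict monotonicity of $\Gamma$ (since $\gamma_*>0$) and $\Gamma(0)=0$ yield $\pi_*(\lambda,s)\to 0$ for every $s>0$, and for $0\le y<s$ the ratio
\[
\frac{\pi_*(\lambda,s)}{\pi_*(\lambda,y)}=e^{-\lambda(\Gamma(s)-\Gamma(y))}\frac{\pi_*(0,s)}{\pi_*(0,y)}\longrightarrow 0,
\]
while this ratio is bounded by $\pi_*(0,s)/\pi_*(0,y)$, which is bounded on $\{0\le y\le s\le m\}$ away from the measure-zero diagonal $y=s$. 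Similarly $e^{\lambda\tau}\to 0$ for $\tau<0$ and is bounded by $1$. Dominated convergence applied entry by entry to the matrix in \eqref{5.16} then gives
\begin{align*}
A_{11},\;A_{13},\;A_{22},\;A_{23},\;A_{31},\;A_{32}&\longrightarrow 0,\\
A_{12}\longrightarrow 1,\qquad A_{21}=1-\alpha,\qquad A_{33}&\longrightarrow -1,
\end{align*}
so that
\[
\lim_{\lambda\to+\infty}K(\lambda)=\det\!\begin{pmatrix}0&1&0\\ 1-\alpha&0&0\\ 0&0&-1\end{pmatrix}=1-\alpha>0,
\]
using the standing assumption $0\le\alpha<1$. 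Since $K(0)<0$ and $K(\lambda)\to 1-\alpha>0$, continuity on $[0,\infty)$ yields $\lambda_0>0$ with $K(\lambda_0)=0$. By the characterisation via \eqref{char.}, $\lambda_0\in\sigma_p(\mathscr{A})$, and Proposition \ref{coro4.7} then gives $s(\mathscr{A})\ge\lambda_0>0$, proving linear instability of $p_*$.

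The main obstacle is the bookkeeping in the $\lambda\to+\infty$ step: the entries $A_{13}$, $A_{23}$, $A_{33}$ contain iterated integrals of the form $\pi_*(\lambda,s)\int_0^s[\pi_*(\lambda,y)]^{-1}(\cdots)\,dy$, and one has to verify that the potentially large factor $[\pi_*(\lambda,y)]^{-1}$ is never unleashed because it is always paired against the smaller $\pi_*(\lambda,s)$ with $y\le s$. The crucial monotonicity $\Gamma(y)\le\Gamma(s)$ controls the combined exponential, making dominated convergence apply cleanly; once this is done, the resulting $3\times 3$ limit matrix is sparse enough that its determinant $1-\alpha$ drops out immediately.
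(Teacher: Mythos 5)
Your proposal is correct and follows essentially the same route as the paper: compute $\lim_{\lambda\to+\infty}K(\lambda)=1-\alpha>0$, note $K(0)<0$, and apply the Intermediate Value Theorem to produce a positive real eigenvalue, which by Proposition~\ref{coro4.7} forces $s(\mathscr{A})>0$. The paper states this tersely (treating the limit as "readily deduced"), whereas you spell out the dominated-convergence bookkeeping — in particular the crucial observation that every $[\pi_*(\lambda,y)]^{-1}$ factor is paired with a $\pi_*(\lambda,s)$ with $s\ge y$, keeping the combined exponential bounded — but the underlying argument is the same.
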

\begin{proof}
	It suffices to show that there exists a positive solution \(\lambda\) of the characteristic equation \eqref{char.}. We can readily deduce that
	\begin{equation}
		\begin{array}{c}
			\lim _{\lambda \rightarrow+\infty} K(\lambda)=\begin{vmatrix}
				0 & 1 &  \hspace{0.5em} 0\\
				1- \alpha \hspace{0.5em} &  0 &  \hspace{0.5em} 0\\
				0 &  0 & \hspace{0.5em} -1
			\end{vmatrix}
			=1- \alpha, \text{ for  } 0\leq \alpha<1.
		\end{array}
	\end{equation}
	Here the limit is taken in $\mathbb{R}$, then we can formulate the above simple instability criterion, which follows immediately from the Intermediate Value Theorem since $K(0)<0$.
\end{proof}

Since a strict linear stability proof requires showing that all zeros of the characteristic equation are be located in  the left half-plane of $\mathbb{C}$, the stability results for positive stationary solutions of model are much more difficult to obtain than instability results, especially considering that our growth and mortality rates are both depend on the total population size, and the birth rate that involves fertility delay and an infinite dimensional interaction variable (environment). 
We will now demonstrate for some special cases of the model ingredients, that we can overcome these difficulties.
Consider the situation when  mortality and growth rates are independent of the population size $P$, i.e. $\gamma_{P}\equiv0\equiv\mu_{P}$. Hence $\varepsilon_{*}=p_{*}(s) \left(\mu_{P}(s,P_{*})+\gamma_{sP}(s,P_{*})\right)+p_{*}^{\prime}(s) \gamma_{P}(s,P_{*})=0$.
In this case, we can derive explicit conditions for the linear stability and instability of the positive stationary solution in a relatively straightforward fashion. We have the following result.

\begin{theorem}\label{th5.4}
	Suppose that $\varepsilon_{*} \equiv 0$ and the positivity condition \eqref{4.2} holds true.
	
	(i) If $ \beta_{Q}\left(s,\tau, Q_{*}\right) < 0$, then the positive stationary solution $p_{*}$ is linearly asymptotically stable.
	
	(ii) If $\beta_{Q}\left(s,\tau, Q_{*}\right) \geq 0$, then $p_{*}$ is linearly unstable.
\end{theorem}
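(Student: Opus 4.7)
The plan is to exploit the dramatic simplification of the characteristic equation \eqref{char.} produced by $\varepsilon_*\equiv 0$, then reduce to real $\lambda$ using Proposition \ref{coro4.7}, and finally read off the sign of $K(\lambda)$ from the sign of $\beta_Q$.

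Setting $\varepsilon_*\equiv 0$ annihilates every $\varepsilon_*$-integrand in the entries of the characteristic determinant, so inspection of the formulas of $A_{13},A_{23},A_{33}$ gives $A_{13}(\lambda)=A_{23}(\lambda)=0$ and $A_{33}(\lambda)=-1$. Expanding \eqref{char.} along the third column yields
\[
K(\lambda) \;=\; -\bigl(A_{11}(\lambda)\,A_{22}(\lambda)-(1-\alpha)\,A_{12}(\lambda)\bigr).
\]
Introducing the abbreviations
\[
F_1(\lambda):=\int_0^m\!\!\int_{-\theta}^0 e^{\lambda\tau}\beta(s,\tau,Q_*(s))\,\pi_*(\lambda,s)\,\ud\tau\,\ud s,\qquad N(s,\lambda):=\int_{-\theta}^0 e^{\lambda\tau}\beta_Q(s,\tau,Q_*(s))\,p_*(s)\,\ud\tau,
\]
and applying Fubini to the inner $s,y$-integral appearing in the $F_3$-term of $A_{12}$, the two contributions involving $\beta_Q$ combine so that
\[
\frac{K(\lambda)}{1-\alpha}\;=\;1-F_1(\lambda)\;-\;\int_0^m w(y)\,\pi_*(\lambda,y)\left[\int_0^y N(s,\lambda)\,\ud s+\alpha\int_y^m N(s,\lambda)\,\ud s\right]\ud y.
\]
The steady-state identity $\mathscr{R}(P_*,Q_*)=1$ is precisely $F_1(0)=1$, so the first two terms cancel at $\lambda=0$.

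Under the positivity hypothesis \eqref{4.2}, Proposition \ref{coro4.7} guarantees $s(\mathscr{A})\in\sigma_p(\mathscr{A})$, so the spectral bound is a real eigenvalue and it suffices to test $K$ on $[0,\infty)$. For real $\lambda\ge 0$ the pointwise bounds $|e^{\lambda\tau}|\le 1$ on $[-\theta,0]$ and $\pi_*(\lambda,s)\le\pi_*(0,s)$ on $[0,m]$ imply that $F_1$ is non-increasing on $[0,\infty)$ with $F_1(0)=1$, so $1-F_1(\lambda)\ge 0$. In case (i) the assumption $\beta_Q<0$ forces $N(s,\lambda)<0$; the bracketed quantity is therefore strictly negative, and after the leading minus sign the full integral contributes strictly positively, giving $K(\lambda)/(1-\alpha)>0$ for every $\lambda\ge 0$. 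Since $\alpha<1$ this says $K>0$ on $[0,\infty)$, hence $s(\mathscr{A})<0$ and the stationary solution is linearly asymptotically stable. In case (ii) the assumption $\beta_Q\ge 0$ yields $N(s,\lambda)\ge 0$, so the same display at $\lambda=0$ gives $K(0)\le 0$; whenever $\beta_Q$ is not almost-everywhere zero the inequality is strict and Theorem \ref{th5.3} delivers linear instability.

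I expect the main technical obstacle to be the careful bookkeeping that reduces the $3\times 3$ determinant to the single clean expression for $K(\lambda)/(1-\alpha)$; the Fubini swap in particular must be done attentively so that the coefficient $(1-\alpha)$ of the combined $\beta_Q$-term comes out correctly and cancels the $A_{11}A_{22}$ contribution. The conceptual step that makes the argument work cheaply is Proposition \ref{coro4.7}, which removes the need to control $K$ on the complex right half-plane and reduces everything to a one-dimensional sign analysis along $[0,\infty)$.
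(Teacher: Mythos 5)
Your proof is correct and, in part (i), follows a genuinely different and cleaner route than the paper's. Both arguments share the same foundation: the determinant collapses because $A_{13}=A_{23}=0$ and $A_{33}=-1$, and Proposition \ref{coro4.7} (under \eqref{4.2}) reduces the problem to a real sign analysis of the characteristic function on $[0,\infty)$. You then normalise by $1-\alpha>0$, perform the Fubini swap to combine the two $\beta_Q$ terms, and obtain
\[
\frac{K(\lambda)}{1-\alpha}=\bigl(1-F_1(\lambda)\bigr)-\int_0^m w(y)\,\pi_*(\lambda,y)\left[\int_0^y N(s,\lambda)\,\ud s+\alpha\int_y^m N(s,\lambda)\,\ud s\right]\ud y,
\]
and the stability claim falls out at once from the pointwise bounds $e^{\lambda\tau}\le 1$, $\pi_*(\lambda,\cdot)\le\pi_*(0,\cdot)$, and the strict negativity of $N$. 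The paper instead works with $\tilde K(\lambda)$ (which differs from your $K(\lambda)$ by the nonzero factor $-(1-\alpha)$, so the zeros coincide), evaluates $\tilde K(0)<0$ using $\mathscr{R}(P_*,Q_*)=1$, and then differentiates $\tilde K$ in $\lambda$ — invoking a second Fubini swap and the positivity condition \eqref{4.2} once more to conclude $\tilde K'(\lambda)\le 0$ and hence monotone decrease. Your direct decomposition into two nonnegative pieces buys a shorter argument: it avoids the derivative computation entirely, and consequently uses \eqref{4.2} only through Proposition \ref{coro4.7} rather than also to sign $\tilde K'$. For part (ii) the two proofs are essentially the same (sign of the characteristic function at $0$ plus the limit $K(\lambda)\to 1-\alpha>0$ and the intermediate value theorem, which is what Theorem \ref{th5.3} packages). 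Both your argument and the paper's share the same marginal weakness in (ii): when $\beta_Q\equiv 0$ one only gets $K(0)=0$ and no positive root, so instability is not actually established in that degenerate case; you are candid about this, whereas the paper is silent.
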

\begin{proof}
	For the special case of model ingredients we are dealing now, we have for the terms in the characteristic equation \eqref{char.}
	\begin{align*}
		\tilde{A}_{11}(\lambda) =& (1-\alpha) w(0) \int_{0}^{m} \int_{-\theta}^{0} e^{\lambda \tau} \beta_{Q}\left(s, \tau, Q_{*}(s)\right) p_{*}(s)  \ud \tau \ud s,\\
		\tilde{A}_{12}(\lambda) =& 1-\int_{0}^{m}\int_{-\theta}^{0}e^{\lambda\tau}\beta(s,\tau,Q_{*}(s))\pi(\lambda, s)\ud \tau \ud s\\
		&+
		(1-\alpha) \int_{0}^{m} \int_{-\theta}^{0} e^{\lambda \tau} \beta_{Q}\left(s, \tau, Q_{*}(s)\right) p_{*}(s)\int_{0}^{s} w(y)\pi(\lambda, y)\ud y  \ud \tau \ud s,\\ 
		\tilde{A}_{13}(\lambda) =&0,\\ 
		\tilde{A}_{21}(\lambda) =&1-\alpha,\\
		\tilde{A}_{22}(\lambda) =& \int_{0}^{m} \frac{w(s)}{w(0)} \pi(\lambda, s) \ud s, \\
		\tilde{A}_{23}(\lambda) =& 0,\\
		\tilde{A}_{31}(\lambda) =&\int_{0}^{m}\pi(\lambda, s) \ud s  \int_{0}^{m} \int_{-\theta}^{0}  e^{\lambda \tau} \beta_{Q}\left(s, \tau, Q_{*}(s)\right) p_{*}(s)  \ud \tau \ud s,\\
		\tilde{A}_{32}(\lambda) =&\int_{0}^{m}\pi(\lambda, s) \ud s \int_{0}^{m}\int_{-\theta}^{0}e^{\lambda\tau}\beta(s,\tau,Q_{*}(s))\frac{\pi(\lambda, s)}{w(0)(\alpha-1)}\ud \tau \ud s\\
		&+
		\int_{0}^{m}\pi(\lambda, s) \ud s\int_{0}^{m} \int_{-\theta}^{0} e^{\lambda \tau} \beta_{Q}\left(s, \tau, Q_{*}(s)\right) p_{*}(s) \int_{0}^{s} \frac{w(y)}{w(0)}\pi(\lambda, y)\ud y   \ud \tau \ud s,\\
		\tilde{A}_{33}(\lambda) =&-1,
	\end{align*}
	where we have set  $$\pi(\lambda, s)=e^{-\int_{0}^{s} \frac{\lambda+\gamma_{s}(a)+\mu(a)}{\gamma(a)} \ud a}.$$
	It follows that
	$$
	\begin{aligned}
		\tilde{K}(\lambda)&= \int_{0}^{m}\int_{-\theta}^{0}e^{\lambda\tau}\beta(s,\tau,Q_{*}(s))\pi(\lambda, s)\ud \tau \ud s\\
		&+
		(\alpha-1) \int_{0}^{m} \int_{-\theta}^{0} e^{\lambda \tau} \beta_{Q}\left(s, \tau, Q_{*}(s)\right) p_{*}(s)\int_{0}^{s} w(y)\pi(\lambda, y)\ud y  \ud \tau \ud s\\
		&+
		\int_{0}^{m} \int_{-\theta}^{0} e^{\lambda \tau} \beta_{Q}\left(s, \tau, Q_{*}(s)\right) p_{*}(s) \ud \tau \ud s \int_{0}^{m} w(y)\pi(\lambda, y)\ud y -1.
	\end{aligned}
	$$
	Clearly condition \eqref{4.2} of Theorem \ref{4.8} is satisfied, thus we can restrict the characteristic equation $ \tilde{K}(\lambda)$ to $\lambda \in \mathbb{R}$.  Making use of $\beta_{Q}\left(s,\tau, Q_{*}\right) < 0$ and Eq. \eqref{2.2},  we obtain
	\begin{align*}
		\tilde{K}(0)=& 
		(\alpha-1) \int_{0}^{m} \int_{-\theta}^{0} \beta_{Q}\left(s, \tau, Q_{*}(s)\right) p_{*}(s)\int_{0}^{s} w(y)\pi(0, y)\ud y  \ud \tau \ud s\\
		&+
		\int_{0}^{m} \int_{-\theta}^{0}  \beta_{Q}\left(s, \tau, Q_{*}(s)\right) p_{*}(s) \ud \tau \ud s \int_{0}^{m} w(y)\pi(0, y)\ud y +\mathscr{R}(0,Q_{*}) -1\\
		=&
		\int_{0}^{m} \int_{-\theta}^{0} \beta_{Q}\left(s, \tau, Q_{*}(s)\right) p_{*}(s) \left(\alpha \int_{0}^{s} w(y)\pi(0, y)\ud y + \int_{s}^{m} w(y)\pi(0, y)\ud y \right) \ud \tau \ud s\\
		<&0.
	\end{align*}
	Moreover, we deduce that
	\begin{align*}
		\tilde{K}^{\prime}(\lambda)&= \int_{0}^{m}\int_{-\theta}^{0}e^{\lambda\tau}\beta(s,\tau,Q_{*}(s)) (\tau-\int_{0}^{s}\frac{1}{\gamma(a)} \d a) \pi(\lambda, s)\ud \tau \ud s\\
		&+
		(\alpha-1) \int_{0}^{m} \int_{-\theta}^{0} e^{\lambda \tau} \beta_{Q}\left(s, \tau, Q_{*}(s)\right) p_{*}(s)\int_{0}^{s} w(y)(\tau-\int_{0}^{s}\frac{1}{\gamma(a)} \ud a) \pi(\lambda, y)\ud y  \ud \tau \ud s\\
		&+
		\int_{0}^{m} \int_{-\theta}^{0} e^{\lambda \tau} \beta_{Q}\left(s, \tau, Q_{*}(s)\right) p_{*}(s) \int_{0}^{m} w(y) (\tau-\int_{0}^{s}\frac{1}{\gamma(a)} \d a) \pi(\lambda, y)\ud y \ud \tau \ud s.
	\end{align*}
	The tricky step needed here is to note that
	\begin{align*}
		&\int_{0}^{m} \int_{-\theta}^{0} e^{\lambda \tau} \beta_{Q}\left(s, \tau, Q_{*}(s)\right) p_{*}(s)\int_{0}^{s} w(y)(\tau-\int_{0}^{s}\frac{1}{\gamma(a)} \d a) \pi(\lambda, y)\ud y  \ud \tau \ud s\\
		= &\int_{0}^{m} \int_{-\theta}^{0} e^{\lambda \tau} w(s)
		\pi(\lambda, s)(\tau-\int_{0}^{s}\frac{1}{\gamma(a)} \ud a)  \int_{s}^{m} \beta_{Q}\left(y, \tau, Q_{*}(y)\right) p_{*}(y) \ud y  \ud \tau \ud s.
	\end{align*}
	By means of the positivity  condition \eqref{4.2}, we observe that
	%Moreover, the positivity condition (3.13) yields that
	\begin{align*}
		\begin{aligned}
			\tilde{K}^{\prime}(\lambda)=&\int_{0}^{m}\int_{-\theta}^{0}e^{\lambda\tau}\beta(s,\tau,Q_{*}(s)) (\tau-\int_{0}^{s}\frac{1}{\gamma(a)} \d a) \pi(\lambda, s)\ud \tau \ud s\\
			&+
			\alpha \int_{0}^{m} \int_{-\theta}^{0} e^{\lambda \tau} w(s)
			\pi(\lambda, s)(\tau-\int_{0}^{s}\frac{1}{\gamma(a)} \ud a)  \int_{s}^{m} \beta_{Q}\left(y, \tau, Q_{*}(y)\right) p_{*}(y) \ud y  \ud \tau \ud s\\
			&+
			\int_{0}^{m} \int_{-\theta}^{0} e^{\lambda \tau} w(s)\pi(\lambda, s)(\tau-\int_{0}^{s}\frac{1}{\gamma(a)} \ud a)  \int_{0}^{s} \beta_{Q}\left(y, \tau, Q_{*}(y)\right) p_{*}(y) \ud y  \ud \tau \ud s\\
			=&\int_{0}^{m} \int_{-\theta}^{0} e^{\lambda \tau} \pi(\lambda, s)(\tau-\int_{0}^{s} \frac{1}{\gamma(a)} \ud a)  \Big[ \beta\left(s, \tau, Q_{*}(s)\right) \\
			&+ w(s)\left(\int_{0}^{s} \beta_{Q}\left(y, \tau, Q_{*}(s)\right) p_{*}(y) \ud y+\alpha \int_{s}^{m} \beta_{Q}\left(y, \tau, Q_{*}(s)\right) p_{*}(y) \ud y\right)\Big] \ud \tau \ud s\\ \leq& 0.
		\end{aligned}
	\end{align*}
	As a result, for $\lambda \geq 0$, $\tilde{K}(\lambda)$ is monotone decreasing, and the stability result follows. Since $\tilde{K}(0)\geq 0$ by $\beta_{Q}\left(s,\tau, Q_{*}\right) \geq 0$ 
	and $\lim _{\lambda \rightarrow+\infty} \tilde{K}(\lambda)=-1$, the instability result follows  from the Intermediate Value Theorem.
\end{proof}

\section{Examples and simulations}

In this section we will present two examples to illustrate and underpin the linear stability results presented in Theorems \ref{th5.2} and \ref{th5.4}.

\begin{example}\label{ex6.0}(Stability of $p_{0}$)
	We set the model ingredients as follows: 
	$$\gamma\equiv 1,\, \mu\equiv 0.5,\, w\equiv 1,\, \alpha=0.5,\, \theta=1.5,\, m=8;$$
	$$
	\beta (s, \tau, Q(s,t+\tau))= \begin{cases} 0.5 e^{\tau}(0.7+sin^{2}(2s))(1-Q), & 0 \leq s \leq 8, \\ 0, & \text {otherwise.}\end{cases}
	$$
	\begin{figure}[H]
		\centering
		\begin{minipage}{0.49\linewidth}
			\centerline{\includegraphics[width=8.1cm]{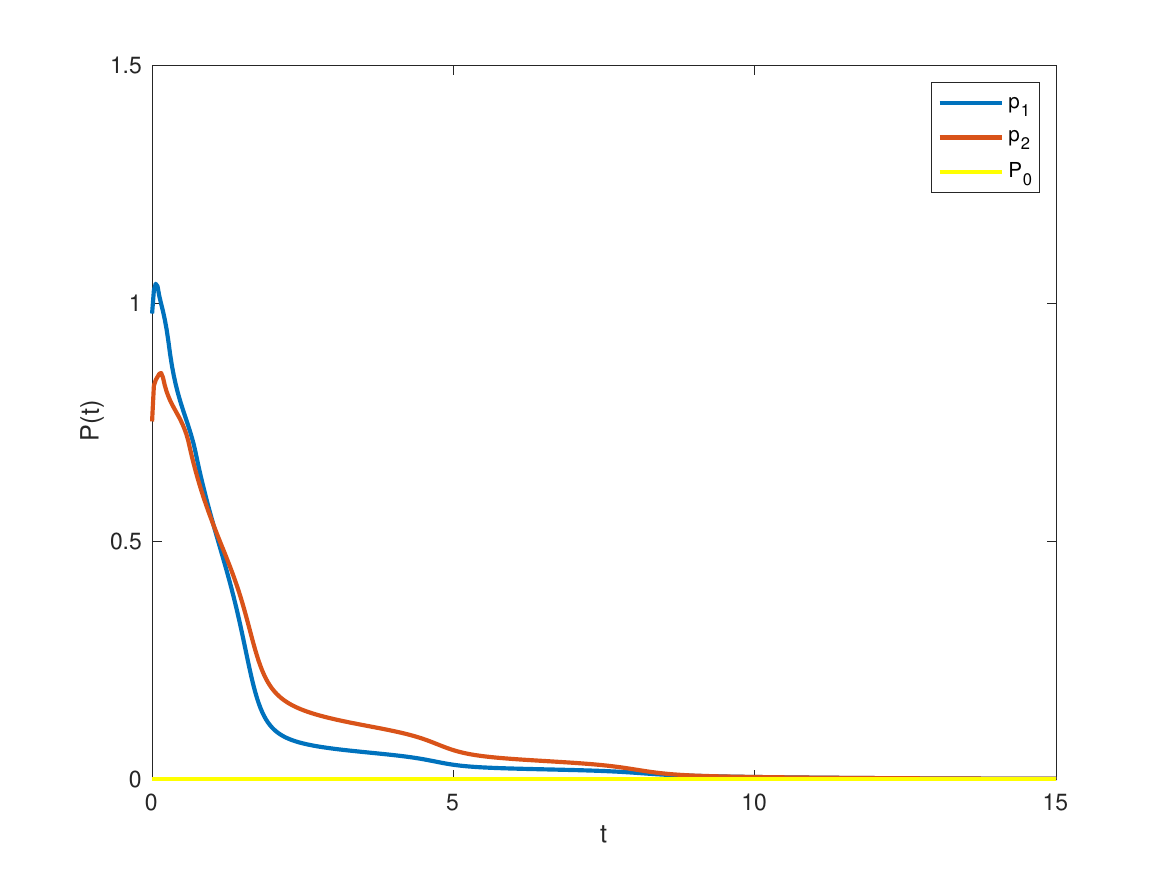}}
		\end{minipage}
		\hfill
		\begin{minipage}{0.49\linewidth}
			\centerline{\includegraphics[width=8.5cm]{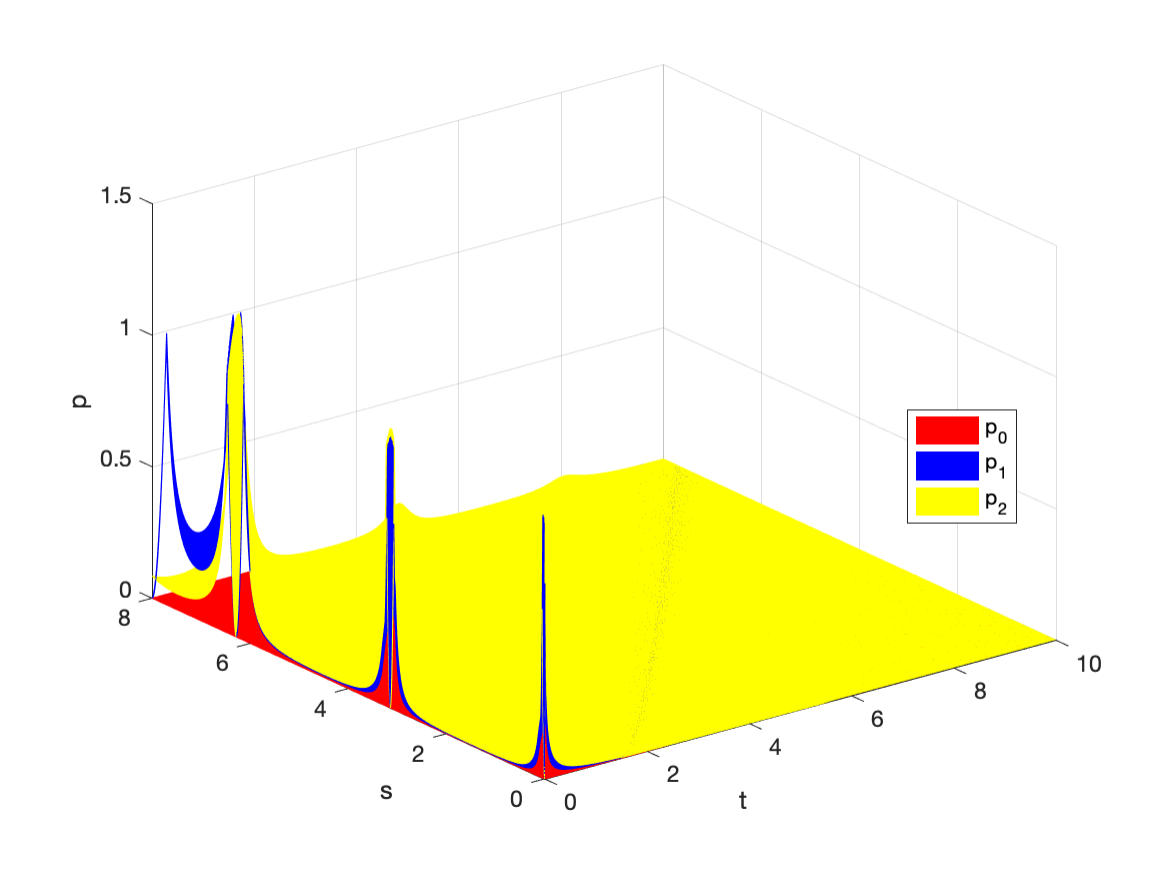}}
		\end{minipage}
		\hfill
%		\captionsetup{margin=35pt}
		\caption{$\mathscr{R}(0,0)=0.9088$, $p_{0}$ represents the trivial stationary solution and $P(t)$ the total population size at time $t$; 
			the initial conditions corresponding to $p_{1}$ to $p_{2}$ are
			$u_{1}=12sin^2(s)(8-s)^2$ and  $u_{2}=3cos^2(s+\frac{\pi}{2})(10-s)^2.$
		}\label{stable0}
	\end{figure}
	We compute $\mathscr{R}(0,0)=0.9088<1$ using the inherent net  reproduction function \eqref{2.3}. 
	We can observe that as time increases, solutions approach the horizontal plane (trivial stationary solution), demonstrating the linear stability result in Theorem \ref{th5.2}, as shown in Fig.\ref{stable0}.

	When the fertility rate is changed to 
	$$
	\beta (s, \tau, Q(s,t+\tau))= \begin{cases} 0.55 e^{\tau}(1+cos^{2}(0.1s))(1-Q), & 0 \leq s \leq 8, \\ 0, & \text {otherwise,}\end{cases}
	$$
	we compute $\mathscr{R}(0,0)=1.6297>1$. As shown in Fig.\ref{unstable0}, the numerical results indicate that the solutions corresponding to the initial conditions $p_{1}$ and $p_{2}$ gradually move away from the horizontal plane, demonstrating the instability result presented in Theorem \ref{th5.2}.
	\begin{figure}[H]
		\centering
		\begin{minipage}{0.49\linewidth}
			\centerline{\includegraphics[width=8.1cm]{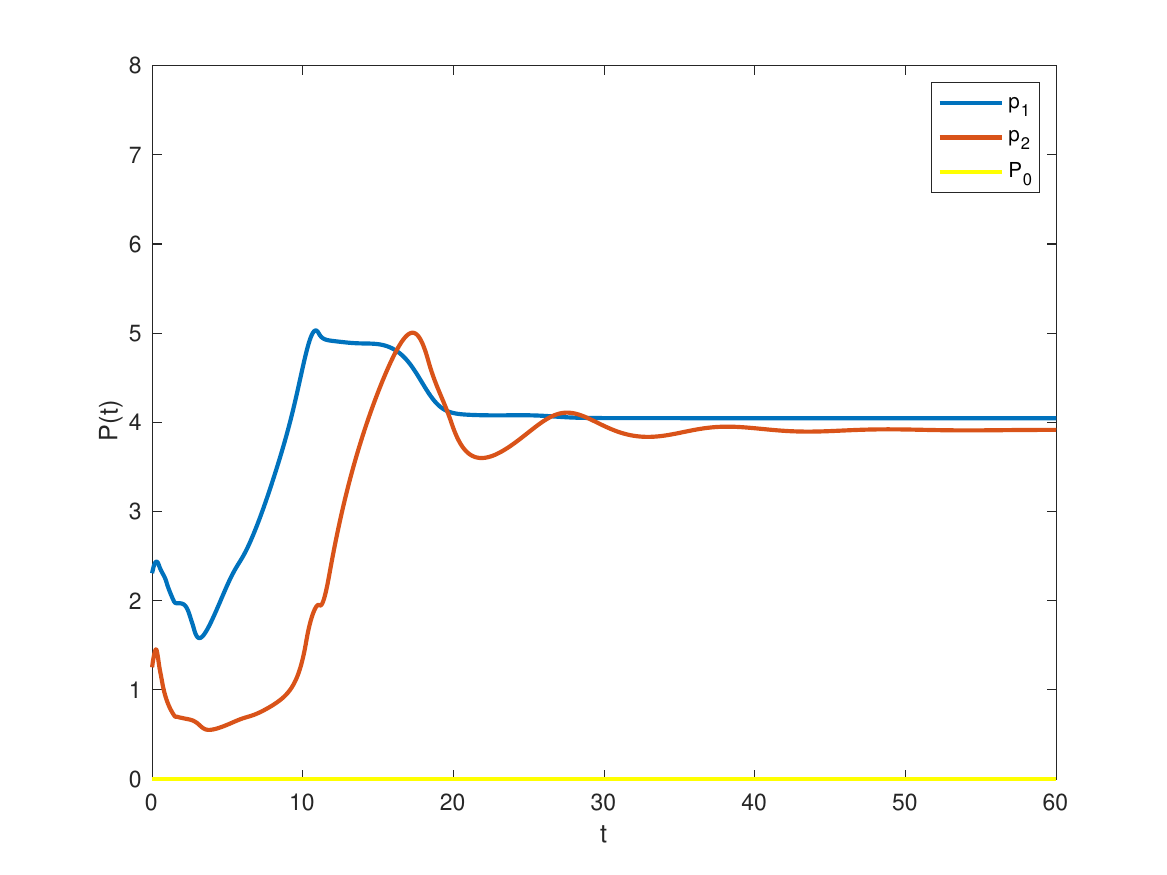}}
		\end{minipage}
		\hfill
		\begin{minipage}{0.49\linewidth}
			\centerline{\includegraphics[width=8.5cm]{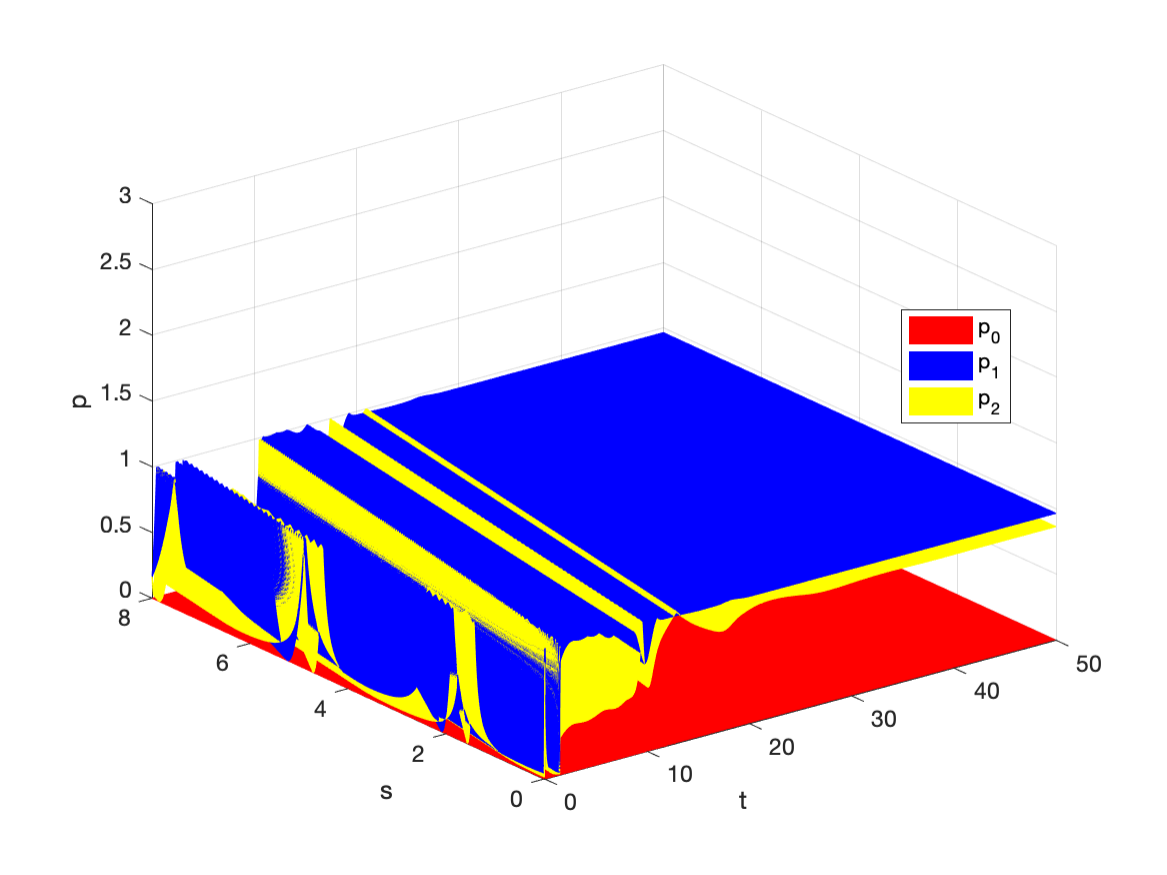}}
		\end{minipage}
		\hfill
%		\captionsetup{margin=35pt}
		\caption{$ R(0,0)=1.6297$; the total population size $P(t)$ is plotted on the left; 
			the initial conditions corresponding to $p_{1}$ to $p_{2}$ are
			$u_{1}=0.3sin^2(s+\frac{\pi}{3})(10-s)^2; u_{2}=0.5sin^2(s+\frac{\pi}{2})(12-s)^2$.
		}\label{unstable0}
	\end{figure}
\end{example}

\begin{example}\label{ex6.1}(Stability of  $p_{*}$) Let us now consider the following set of model ingredients 
	$$\gamma\equiv 1,\, \mu\equiv 0.58,\,  w\equiv 1,\,  \alpha=0.6,\,  \theta=0.5;$$
	$$
	\beta (s, \tau, Q(s,t+\tau))= \begin{cases}  e^{\tau}(1+1.8s)(1-Q), & 0\leq Q\leq 1, \\ 0, & \text {otherwise.}\end{cases}
	$$
	It is not difficult to verify that both conditions \eqref{4.2} and $ \beta_{Q}\left(s,\tau, Q_{*}\right) < 0$ hold true for the current set of model ingredients. Here we take the initial conditions 
	$$u_{1}(s)=\frac{0.1}{0.1+10s^{3}}+0.028,\quad u_{2}(s)=\frac{0.1}{4+2s^{3}}+0.1; \quad\quad s\in[0,8].$$ 
	The numerical results show  that 
	total population sizes corresponding to the solutions $p_{1},p_{2}$  eventually converge to the total population size corresponding to the positive stationary solution $P_{*}$, which demonstrates the stability  result in Theorem \ref{th5.4}, as shown by  Fig.\ref{stable}.

	\begin{figure}[H]
		\centering
		\begin{minipage}{0.49\linewidth}
			\centerline{\includegraphics[width=8.1cm]{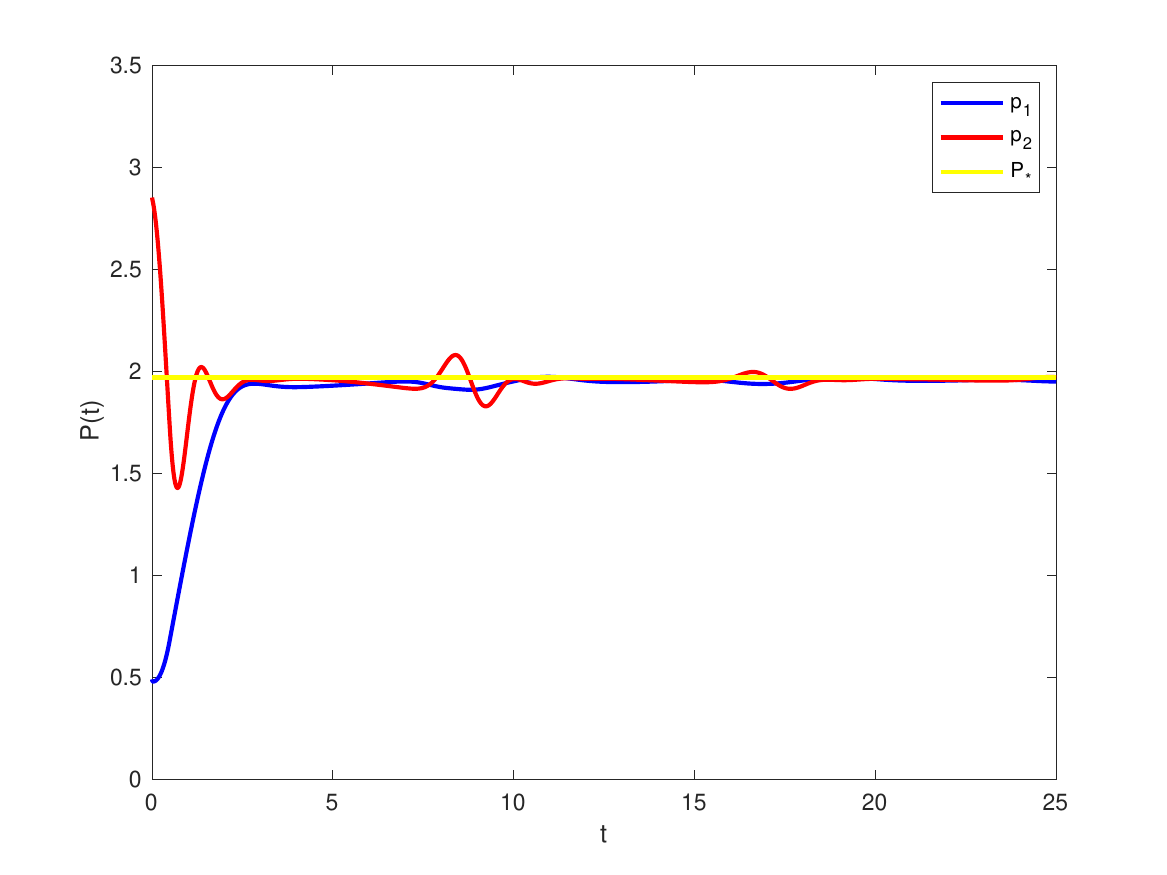}}
		\end{minipage}
		\hfill
		\begin{minipage}{0.49\linewidth}
			\centerline{\includegraphics[width=8.5cm]{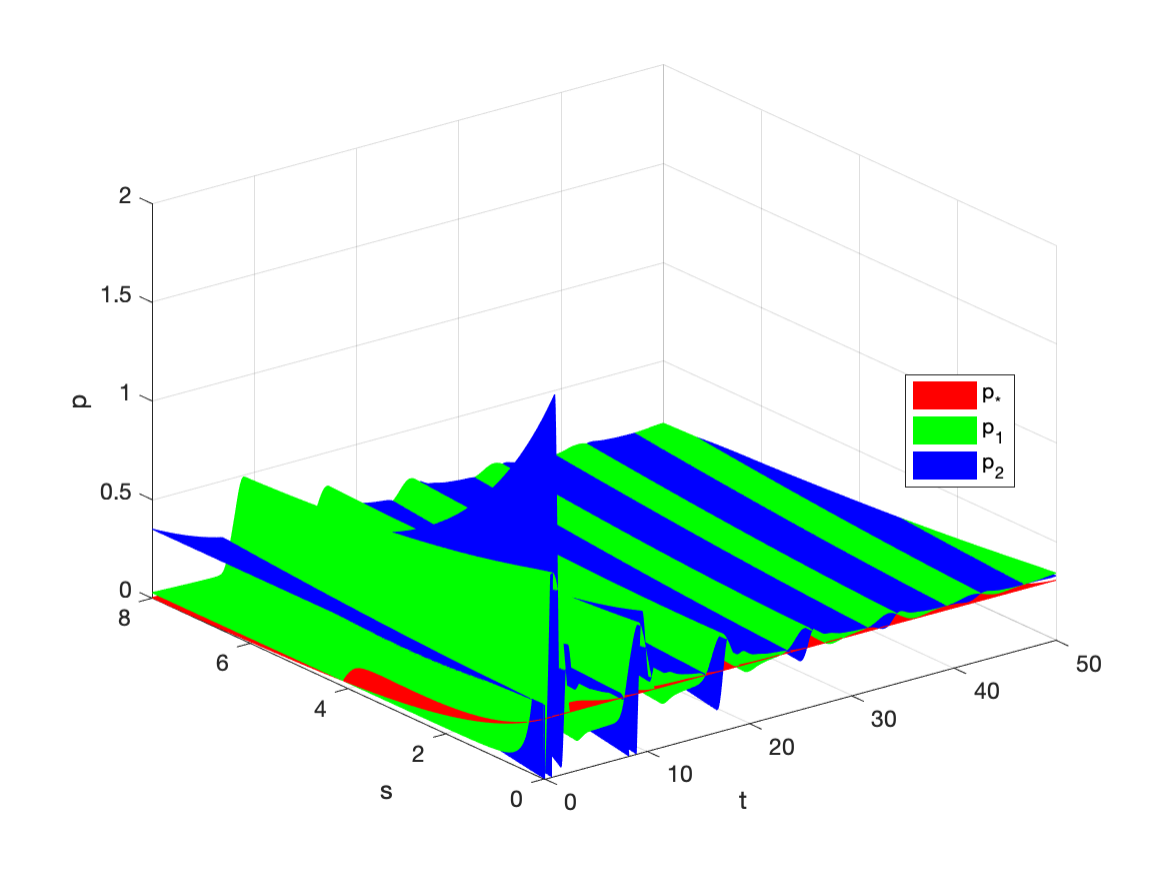}}
		\end{minipage}
		\hfill
%		\captionsetup{margin=35pt}
		\caption{ $P(t)$ denotes the total population size at time $t$; $p_{*}$represents the stationary solution;
			the parameters $\gamma\equiv 1, \mu\equiv 0.58, w\equiv 1, \alpha\equiv 0.6, \theta=0.5, m=8$;
			the initial conditions corresponding to curves $p_{1}$ to $p_{2}$ are $u_{1}=\frac{0.1}{0.1+10s^{3}}+0.028$ and 
			$u_{2}=\frac{0.1}{4+2s^{3}}+0.1$.  On the left we can see the total population sizes plotted, while on the right the corresponding density distributions.  } \label{stable}
	\end{figure}
	Next we replace the fertility function with the following one
	$$
	\beta (s, \tau, Q(s,t+\tau))= \begin{cases} 0.5 e^{\tau}(1+0.1s)Q, &  Q \geq 0, \\ 0, & \text {otherwise.}\end{cases}
	$$
	It is obvious that conditions \eqref{4.2} and $ \beta_{Q}\left(s,\tau, Q_{*}\right) \geq 0$ of Theorem \ref{th5.4} are satisfied.
	The trajectories $p_{1}$ and $p_{2}$ are shown in Fig.\ref{unstable} with two different initial conditions. 
	This example demonstrates the instability result we obtained in Theorem \ref{th5.4}.

	\begin{figure}[H]
		\centering
		\begin{minipage}{0.49\linewidth}
			\centerline{\includegraphics[width=8.1cm]{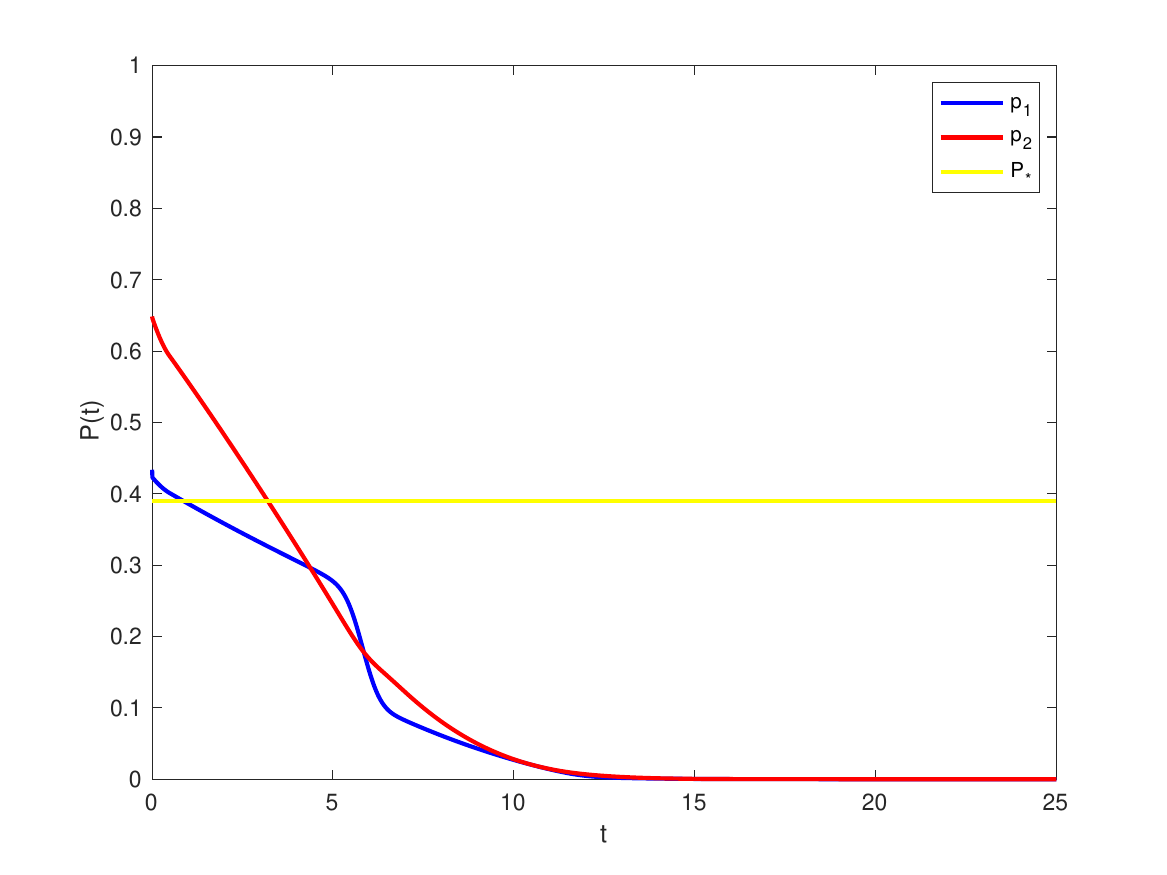}}
		\end{minipage}
		\hfill
		\begin{minipage}{0.49\linewidth}
			\centerline{\includegraphics[width=8.5cm]{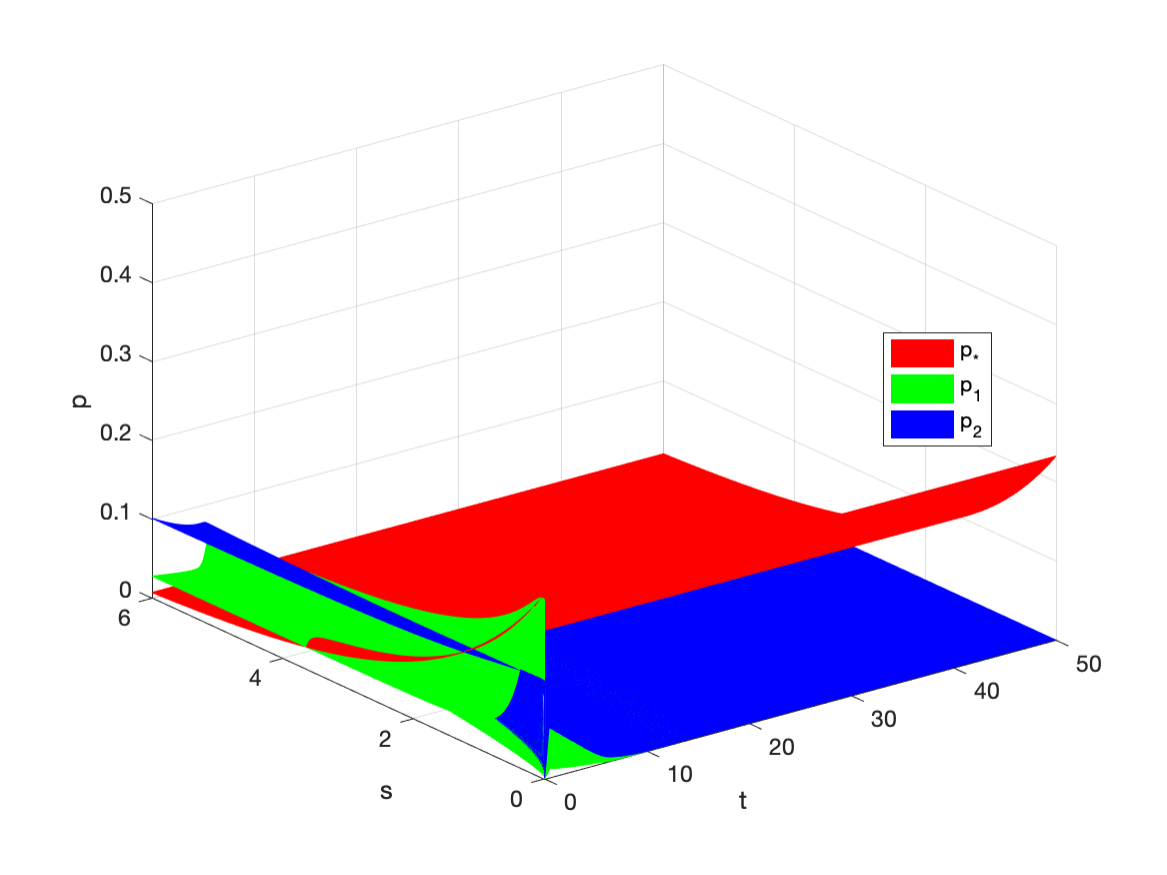}}
		\end{minipage}
		\hfill
%		\captionsetup{margin=35pt}
		\caption{
			$P(t)$ denotes the total population size at time $t$; $p_{*}$represents the stationary solution;
			the parameters $\gamma\equiv 1, \mu\equiv 0.58, w\equiv 1, \alpha\equiv 0.6, \theta=0.5, m=8$;
			the initial conditions corresponding to curves $p_{1}$ to $p_{2}$ are $u_{1}=\frac{0.1}{0.1+10s^{3}}+0.028$ and 
			$u_{2}=\frac{0.1}{4+2s^{3}}+0.1$.  On the left we can see the total population sizes plotted, while on the right the corresponding density distributions. }\label{unstable}
	\end{figure}
	
\end{example}

\section{Conclusion}

In this work we have introduced and analysed a partial differential equation model intended to describe the dynamics of a hierarchical size-structured population. Our model incorporates two different types of nonlinearities: we assumed that individual growth and mortality are affected by scramble competition (which allows to model for example Allee effects); while recruitment of offspring is affected by contest competition via an infinite dimensional interaction variable related to a hierarchy in the population. Moreover, we incorporated delay in the recruitment (e.g. to account for maturation delay). We have formally linearised our model around a steady state and showed how to apply the theory of strongly continuous semigroups. In particular we studied the asymptotic behaviour of the governing semigroup by using spectral methods. In contrast to \cite{FH2008}, we were able to derive an explicit characteristic equation, which  characterises the point spectrum of the semigroup generator. This then allowed us to derive some stability/instability results, in particular using an appropriately defined net reproduction function.  The stability results we deduced were obtained by using a formal linearisation of the PDE model. A rigorous result often referred to as the Principle of Linearised Stability has not been established for the PDE model we studied here, therefore we presented examples and numerical simulations to underpin the formal stability results we established. 

Structured population models incorporating an infinite dimensional nonlinearity, e.g. due to a hierarchical structure in the population have been studied for long by many researchers. One of the earliest models describing a hierarchically age-structured population can be found in \cite{Cushing1994}. There is a major difference though between age-structured, i.e. semilinear, and size-structured, i.e. quasilinear models, such as the one we studied here. While natural age-structured PDE models tend to be well-posed on the biologically relevant state space of $L^1$; size-structured (quasilinear) models are not necessarily well-posed on $L^1$, in particular when the growth rate depends on the infinite dimensional nonlinearity (interaction variable) in a non-monotone fashion, see e.g. \cite{ASA2005-1,ASA2005}. In this case, in order to study existence of solutions, it is necessary to enlarge the state space and allow for measure valued solutions. The choice of the particular state space then becomes very important as demonstrated recently in \cite{AnnaJozsef}, in particular when trying to extend the theory of positive semigroups to such a setting.

\section*{Acknowledgments}
The authors are grateful to the editors and the anonymous referees for their valuable comments and suggestions which led to an improvement of our original manuscript. The Project was Supported by the Fundamental Research Funds for the Central Universities, China University of Geosciences (Wuhan) (NO.G1323523061).

\end{document}